\documentclass[11pt]{amsart}
\usepackage{amsfonts,latexsym,amsthm,amssymb,amsmath,amscd,euscript,tikz, tikz-cd}
\usepackage[alphabetic, msc-links, bibtex-style, nobysame]{amsrefs}
\usepackage{stackengine}
\usepackage{framed}
\usepackage{xfrac}
\usepackage[makeroom]{cancel}
\usepackage{faktor}
\usepackage{braket}
\usepackage{pgf,tikz,pgfplots}
\pgfplotsset{compat=1.17}
\usepgfplotslibrary{fillbetween} 
\usepackage{mathrsfs}
\usetikzlibrary{arrows}
\definecolor{wrwrwr}{rgb}{0.3803921568627451,0.3803921568627451,0.3803921568627451}
\definecolor{rvwvcq}{rgb}{0.08235294117647059,0.396078431372549,0.7529411764705882}
\definecolor{mblue}{rgb}{0.2, 0.3, 0.8}
\definecolor{morange}{rgb}{1, 0.5, 0}
\definecolor{mgreen}{rgb}{0.1, 0.4, 0.2}
\definecolor{mred}{rgb}{0.5, 0, 0}
\definecolor{ForestGreen}{RGB}{34,139,34}
\usepackage{hyperref}
\hypersetup{colorlinks=true,citecolor=ForestGreen,linkcolor = blue,urlcolor =black,linkbordercolor={1 0 0}}
\usepackage{float}

\numberwithin{equation}{section}

\usepackage{lmodern}
\usepackage{supertabular}
\usepackage{verbatim}
\usepackage{amssymb}
\usepackage{enumerate}
\usepackage{stmaryrd}
\usepackage{bbm}
\usepackage{mathtools}
\usepackage[nopatch=footnote]{microtype}
\usepackage{setspace}
\usepackage{tikz,tikz-cd}
\usetikzlibrary{matrix,calc,positioning,arrows,decorations.pathreplacing,patterns,knots}

\newcommand{\la}{\langle}
\newcommand{\rg}{\rangle}

\newtheorem{theorem}{{Theorem}}[section]
\newtheorem*{theorem*}{Theorem}
\newtheorem{lemma}[theorem]{Lemma}
\newtheorem{proposition}[theorem]{Proposition}

\newtheorem{corollary}[theorem]{Corollary}
\newtheorem*{corollary*}{Corollary}

\theoremstyle{definition}
\newtheorem{definition}{Definition}
\newtheorem{remark}{Remark}

\usepackage{lmodern,url,enumerate,mathtools,microtype}
\usepackage[hmargin = 0.95in,vmargin=0.95in]{geometry}
\usepackage{graphicx}
\usepackage{subcaption}

\newcommand{\ve}{\varepsilon}

\newcommand{\mr}[1]{{\rm #1}}
\newcommand{\cA}{\mathcal{A}}\newcommand{\cB}{\mathcal{B}}
\newcommand{\cC}{\mathcal{C}}
\newcommand{\cE}{\mathcal{E}}
\newcommand{\cG}{\mathcal{G}}

\newcommand{\cL}{\mathcal{L}}
\newcommand{\cM}{\mathcal{M}}
\newcommand{\cP}{\mathcal{P}}
\newcommand{\cR}{\mathcal{R}}
\newcommand{\cS}{\mathcal{S}}
\newcommand{\cU}{\mathcal{U}}

\newcommand{\bN}{\mathbb{N}}

\newcommand{\bR}{\mathbb{R}}
\newcommand{\bS}{\mathbb{S}}

\newcommand{\nc}{\newcommand}

\nc{\on}{\operatorname}
\nc{\p}{\partial}
\nc{\ol}{\overline}
\nc{\ul}{\underline}
\nc{\pa}{\partial}

\nc{\pb}{\partial_b}
\nc{\pc}{\partial_c}
\nc{\pd}{\partial_d}
\nc{\pe}{\partial_e}
\nc{\pf}{\partial_f}
\nc{\pg}{\partial_g}
\nc{\ph}{\partial_h}
\nc{\pari}{\partial_i}
\nc{\pj}{\partial_j}
\nc{\pk}{\partial_k}
\nc{\pl}{\partial_l}
\nc{\pell}{\partial_\ell}
\nc{\parm}{\partial_m}
\nc{\pn}{\partial_n}
\nc{\po}{\partial_o}
\nc{\pp}{\partial_p}
\nc{\pq}{\partial_q}
\nc{\pr}{\partial_r}
\nc{\ps}{\partial_s}
\nc{\pt}{\partial_t}
\nc{\pu}{\partial_u}
\nc{\pv}{\partial_v}
\nc{\pw}{\partial_w}
\nc{\px}{\partial_x}
\nc{\py}{\partial_x}
\nc{\pz}{\partial_z}

\nc{\Spec}{\on{Spec}}

\nc{\sn}{\mr{sn}}
\nc{\cn}{\mr{cn}}
\nc{\dn}{\mr{dn}}

\numberwithin{equation}{section}
\makeatletter
\@addtoreset{equation}{section}
\makeatother

\title{Mean curvature flows with prescribed singular sets}
\date{\today}

\author[Raphael Tsiamis]{Raphael Tsiamis}
\address{ \vspace{-0.1in} \newline 
Department of Mathematics, Columbia University
\newline {\href{mailto:r.tsiamis@columbia.edu}{r.tsiamis@columbia.edu}}}

\begin{document}

\begin{abstract}
For every closed set $K \subset \mathbb{R}^n$ and every $m \geq 2$, we construct a mean-convex ancient solution to mean curvature flow of hypersurfaces in $\mathbb{R}^{m+n}$, with respect to a smooth Riemannian metric arbitrarily $C^\infty$-close to the Euclidean metric, whose first-time singular set is exactly $K \times \{0\}$.
\end{abstract}

\vspace*{0.2in}
\maketitle


\section{Introduction}

Mean curvature flow (MCF) is the evolution of hypersurfaces by their mean curvature vector, providing a canonical method for deforming submanifolds that has been studied extensively since the foundational work of Brakke~\cite{brakke}.
A central theme is the formation and structure of singularities, especially for \textit{mean-convex} flows, i.e., flows with nonnegative mean curvature.
Deep structural results of White~\cites{white-size-singular, nature-singularities} and Huisken--Sinestrari~\cites{huisken, huisken-sinestrari-1} yield a qualitative picture of singularity formation for mean convex flows, and subsequent works develop a comprehensive theory of noncollapsing and surgery~\cites{andrews, brendle, brendle-huisken-1, metzger-schulze, huisken-sinestrari-2, haslhofer-kleiner-1, haslhofer-kleiner-2}.
This theory strongly constrains the possible geometry of singularities: White proved that the spacetime singular set of a mean-convex flow in $\bR^{n+1}$ has parabolic Hausdorff dimension at most $n-1$~\cite{nature-singularities}, and Colding-Minicozzi proved that this set is contained in a finite union of codimension-$2$ Lipschitz submanifolds together with a set of codimension at least $3$~\cites{colding-minicozzi, singular-set}.
Moreover, Cheeger--Haslhofer--Naber established a quantitative stratification and proved Minkowski-type estimates for the singular set of $k$-convex Brakke flows in arbitrary codimension~\cite{quantitative-stratification}.

Against this background, it is natural to ask how flexible the singular set at the first singular time can be.
A question attributed to Colding--Minicozzi, Ilmanen, and White asks whether any $C^1$ curve $\gamma\subset \bR^3$ can arise as the singular set of a mean-convex flow.
In~\cite{white-curves}*{\S 5}, White conjectured that the singular set of a mean curvature flow in $\bR^3$ may only consist of isolated points and curves.
At present, however, very few examples of first-time singular sets are known.
The basic model cases are $\bR^k\times\{0\}\subset\bR^{n+1}$ arising from the shrinking cylinder $\bR^k\times\bS^{n-k}$, and $\bS^{n-k}\times\{0\}$ arising from the ``marriage ring,'' an $O(n-k)\times O(k+1)$-invariant hypersurface diffeomorphic to $\bS^{n-k}\times\bS^k$ with a very small $\bS^k$-radius.
The only other known examples are an incomplete flow with first-time singular set the ray $(0,\infty)$~\cite{wang-self-shrinkers}, and non-product flows in $\bR^{m+n}$ with singular set $\bR^n \times \{ 0 \}$~\cite{huang-zhao}.

In this paper we show that, after an arbitrarily small smooth perturbation of the ambient Euclidean metric, the singular set at the first singular time can be prescribed with essentially no restriction beyond closedness.
More precisely, for any closed set $K \subset \bR^n$ we construct mean-convex ancient solutions of MCF in $\bR^{m+n}$ whose singular set at the first singular time is exactly $K \times \{0 \}\subset \bR^n \times \bR^m$.
\begin{theorem}\label{thm:main-theorem}
    Let $K$ be a non-empty closed subset of $\bR^n$, and $m \geq 2$.
    For each sufficiently small $\tau \in (0,\tau_0(m,n))$, there exists a function $f(x,r) \in C^{\infty}(\bR^n \times \bR_{\geq 0} )$ with 
    \[
    \sup |D_x^k \partial_r^{\ell} (f-1)| < C(m,n,k,\ell) \, \tau, \qquad \text{for every } \; k, \ell \geq 0,
    \]
    and a family of hypersurfaces $\{ \cG_t \}_{t \in (-\infty, 0)}$ that is a mean-convex ancient solution of mean curvature flow in $\bR^{m+n} = \bR^n_x \times \bR^m_{\xi}$ with respect to the metric 
    \[
    g_f(x,\xi) = \sum_{i=1}^n dx_i^2 + f (x,|\xi|^2) \sum_{j=1}^m d \xi_j^2, \qquad r := |\xi|^2,
    \]
    such that the singular set at the first singular time $t=0$ is exactly $\on{sing} \, \cG_t = K \times \{ 0 \} \subset \bR^n \times \bR^m.$
\end{theorem}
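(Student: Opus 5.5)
The approach is to build the flow as a rotationally symmetric family of ``tubes'' around $\bR^n\times\{0\}$ and to let the metric factor $f$ arrange where the tube pinches. Concretely, I look for $\cG_t = \{(x,\xi) : |\xi| = u(x,t)\}$ with $u>0$ for $t<0$. For the metric $g_f$, the normal speed and the mean curvature of such a graph are explicit in $u$, $D_x u$, $D_x^2 u$, $f$ and its derivatives. The simplest choice is to make every slice $\{x\}\times \bS^{m-1}$ behave like a shrinking sphere whose rate depends on $x$ through $f$.

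\textbf{Step 1 (separation of variables).} Write $f(x,r) = 1+\tau\,\phi(x)\,\chi(r)$ or a similar form, and look for solutions $u(x,t)^2 = 2(m-1)\,\lambda(x)\,(-t)$ for $t$ near $0$, modified at large $|t|$. The slice $\{|\xi| = \rho\}$ has induced radius $\sqrt{f(x,\rho^2)}\,\rho$. The singular time at $x$ is the first time the ambient radius $\sqrt{f}\,u$ vanishes. Choose $\lambda$ smooth with $\lambda\ge0$ and zero set exactly $K$; this is possible because every closed set is the zero set of a smooth nonnegative function. Then $u$ vanishes at $t=0$ exactly over $K$ and stays positive elsewhere.

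The construction must make this consistent with the PDE. I will treat the $x$-dependence as a lower-order perturbation: take $\lambda = 1-\tau\psi(x)$ with $\psi$ smooth, $0\le\psi\le1$, all derivatives bounded, and $\psi^{-1}(1)=K$. I will actually let the metric absorb the discrepancy. That is, I fix $u$ explicitly and define $f$ by solving the MCF equation for $f$ as an ODE/algebraic relation in $r$ at each $x$. Since $\cG_t$ foliates $\{\xi\neq0\}$ by level sets $|\xi| = u(x,t)$, prescribing the flow is equivalent to one scalar equation on $(x,r)$, namely speed $=$ mean curvature. This can be solved for $f$ along the foliation parameter, with $f\to1$ as $\tau\to0$.

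\textbf{Step 2 (estimates).} The solved $f$ must satisfy $\sup|D_x^k\p_r^\ell(f-1)|\le C\tau$. The danger is near $r=0$, where the foliation degenerates at $t=0$ over $K$. To handle this I will use the scaling structure: in the variable $r/(-t)$ the equation becomes $t$-independent, so $f$ depends on $(x,r)$ smoothly, with derivatives controlled by those of $\psi$. Smoothness at $r=0$ must also be checked, using the fact that $f$ is a function of $r=|\xi|^2$.

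\textbf{Step 3 (mean convexity and ancientness).} Mean convexity means $H>0$, equivalently inward motion, which follows from $\p_t u<0$. The flow exists for all $t<0$ by construction. The first singular time is $0$, and $\on{sing}$ is exactly $K\times\{0\}$: where $\psi<1$ the surface stays smooth and positive-radius up to $t=0$, and where $\psi=1$ the curvature blows up.

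\textbf{Main obstacle.} I expect Step 2 to be the hardest: the implicitly defined $f$ must be globally smooth with uniform $C^\infty$ bounds near the degenerate set $K\times\{0\}$ and as $r\to\infty$. There I would cut off and set $f\equiv1$ outside a compact $r$-range, interpolating via the ancient-time behavior. If this fails, the fallback is a perturbative fixed-point argument around the cylinder.
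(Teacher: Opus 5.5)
Your Step~1 ansatz cannot give the required singular set. With $u^2=2(m-1)\lambda(x)(-t)$ you get $u(x,0)=0$ for \emph{every} $x$, for either choice of $\lambda$. With $\lambda=0$ on $K$, the tube is moreover degenerate over $K$ for all $t<0$. So every slice pinches at $t=0$, and the singular set would be all of $\bR^n\times\{0\}$. Since $f>0$ is close to $1$, $\sqrt f\,u$ vanishes exactly where $u$ does, so the metric cannot move pinching times.

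What you need is $u(x,0)^2=s(x)$ with $s>0$ on $U=\bR^n\setminus K$ and $s=0$ on $K$. Such a flow is not self-similar, so the reduction to the variable $r/(-t)$ in Step~2 is unavailable. Also, $\cP_f(T)=0$ is not an ODE in $r$ at each $x$: it is a first-order PDE for $f$ in $(x,r)$, since it contains $\la\nabla_x f,\nabla_x T\rg$, and it degenerates at $r=0$.

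The more serious gap is the strategy of fixing $u$ explicitly and letting $f$ absorb the whole discrepancy. This breaks down near $K\times\{0\}$. Take the natural corrected profile $w=u^2=s(x)-2(m-1)t$. Its Euclidean error is exactly $\cM_1(w)=-\Delta s+\frac{Q(s)+2|\nabla s|^2}{4w+|\nabla s|^2}$.

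To leading order in the $r$-direction, $\cP_f(T)=0$ reads $2(m-1)(r\phi_r-\phi)\approx-\cM_1(w)$ with $\phi=f-1$. Its solutions are $\phi\approx-\frac{\Delta s}{2(m-1)}+\frac{|\nabla s|^2}{8(m-1)r}+C(x)\,r$. Hence $\partial_r^2 f\approx\frac{|\nabla s|^2}{4(m-1)r^3}$, which at $r=s(x)$ equals $\frac{|\nabla\log s|^2}{4(m-1)s}$.

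This blows up near $\partial U$, since $\log s\to-\infty$ at finite distance. No choice of $C(x)$ affects it. The only other available term, $\la\nabla_x f,\nabla_x T\rg$, has size $|\nabla_x f|\,|\nabla s|$, so it could take over the $|\nabla s|^2/s$ forcing only if $|\nabla_x f|\gtrsim|\nabla\log s|$. So the bounds $\sup|D^k_x\partial^\ell_r(f-1)|\le C\tau$ fail exactly where you expected trouble. This is not a technicality: near the pinching set the flow must be an honest solution, and the metric can only correct errors that are small relative to the local scale.

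That is the missing idea. The paper keeps $f\equiv1$ on $\{r\le 2h(x)^4\}$. It first constructs an exact Euclidean $O(m)$-invariant solution $w_\tau$ on the non-cylindrical domain $\{-\frac{1}{100m}h(x)^4<t<0\}$ (Theorem~\ref{thm:solution-of-smcf-with-estimates}). This solution is trapped between barriers $w_\pm$ exponentially close to the cylinder, and is shown to be $C^2$-close to the cylinder at scale $\sqrt{w_\tau}$ via Savin's small perturbation theorem on a staircase of cylindrical domains.

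Only afterwards is the metric used, and only in the annulus $h^4\le r\le 3h^4$. There the glued arrival time differs from $T_0$ by an amount exponentially small in $1/h(x)$ in every $C^k$ norm, so the transport equation for $z=(1-f)/r$ has exponentially small forcing. Your fallback, a ``perturbative fixed point around the cylinder,'' would have to reproduce exactly this construction on a domain whose time-extent shrinks to zero at $\partial U$. The proposal offers no mechanism for it.
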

Thus, in every ambient dimension $m+n \ge 3$, the singular set at the first singular time can be arbitrarily complicated.
In particular, in $\bR^3$ we obtain mean-convex ancient solutions for which $K$ is fractal.
Since the ambient metric can be taken arbitrarily $C^\infty$-close to the Euclidean metric, this shows that the Euclidean structural picture for mean-convex singular sets established by~\cite{singular-set} is not stable under arbitrarily small smooth perturbations of the ambient metric.

The construction of Theorem~\ref{thm:main-theorem} utilizes an ansatz for $O(m)$-invariant graphical flows in a warped product background.
The main analytic ingredient is the construction of a solution to $O(m)$-invariant MCF on a non-cylindrical domain which stays on one side of a shrinking cylinder and converges to it exponentially; this is obtained in Theorem~\ref{thm:solution-of-smcf-with-estimates}.
We then solve a transport equation to produce a smooth Riemannian metric as an arbitrarily small perturbation of the Euclidean one, for which this solution can be glued smoothly with the shrinking cylinder.
The glued arrival time function describes a mean-convex ancient flow whose singular set at the first singular time is exactly $K\times\{0\}$.

Our result parallels Leon Simon's recent breakthrough construction of stable minimal hypersurfaces in $\bR^{N+1+\ell}$ with ambient metric arbitrarily close to Euclidean and with singular set an arbitrary closed set $K \times \{ 0 \}$~\cite{prescribed-singular}.
However, the approximately cylindrical solution of Theorem~\ref{thm:solution-of-smcf-with-estimates} is obtained using different methods.
We first construct a very precise approximate solution of $O(m)$-invariant mean curvature flow together together with barriers that remain close on the entire non-cylindrical region, which we approximate by an infinite staircase of cylindrical domains.
We then prove the existence of a solution by an inductive argument, utilizing crucially the parabolic version of Savin's small perturbation theorem~\cites{savin-small-perturbation , small-perturbation } to control the solution in a smaller domain.
As discussed in Section~\ref{section:non-cylindrical}, this method is flexible and applies to a broader class of quasilinear and non-linear parabolic problems, notably the construction of prescribed singular sets for other geometric flows.

\subsection{Acknowledgments}
I am grateful to my advisor, Simon Brendle, for suggesting this problem and for his invaluable guidance and continued support. 
I am also thankful to Ovidiu Savin for helpful discussions.
This work was supported in part by the A.G.~Leventis Foundation Scholarship and the Onassis Foundation Scholarship.

\section{Mean curvature flows in a non-cylindrical domain}\label{section:non-cylindrical}

For a function $u(x,t)$ defined over a spacetime domain $\Omega \subset \bR^n_x \times (-\infty, 0]$, we consider the family of hypersurfaces $\cG_t \subset \bR^{m+n}$ given by $O(m)-$invariant tubes of radius $u(x,t)$, i.e., the symmetric graphs
\[
    \cG_t := \text{\sffamily{SG}}( u(-,t)) = \{ (x,t,\xi) \in \Omega \times \bR^m : |\xi| = u(x,t) \}.
\]
The hypersurfaces $\cG_t$ have normal vector $\nu = \frac{( -\nabla u, \xi / |\xi|)}{\sqrt{1 + |\nabla u|^2}}$, so their mean curvature is given by
\[
H = \on{div} \nu = \frac{1}{\sqrt{1 + |\nabla u|^2}} \left( - \Delta u + \frac{Q(u)}{1 + |\nabla u|^2} + \frac{m-1}{u} \right).
\]
The mean curvature flow equation $(\partial_t F)^{\perp} = - H \nu$ for the cylindrical graphs $\cG_t$ therefore amounts to
\begin{equation}\label{eqn:cmcf}\tag{CMCF}
    \hat{\cM}(u) := \partial_t u - \Delta u + \frac{Q(u)}{1 + |\nabla u|^2} + \frac{m-1}{u} = 0,
\end{equation}
where $\nabla u$ and $\Delta u$ respectively denote the Euclidean gradient and Laplacian of the function.
We also define $Q(u) := D^2 u (\nabla u, \nabla u) = \sum_{i,j} u_i u_j u_{ij}$.
Hereafter, we refer to equation~\eqref{eqn:cmcf} as cylindrical mean curvature flow (CMCF); taking $u(x,t) = \hat{\varphi}_{\lambda}(t) := \sqrt{\lambda -2(m-1)t}$ recovers the shrinking cylinder solutions $\cC_{\lambda} :=\bR^n \times \bS^{m-1}_{\hat{\varphi}_{\lambda}(t)}$, which become singular at time $t = \frac{\lambda}{2(m-1)}$.

We are interested in solutions of equation~\eqref{eqn:cmcf} that remain $C^k$-close to the cylinder solution $\hat{\varphi}_0(t) = \sqrt{-2(m-1) t}$, so it is useful to formulate the problem in terms of the function $w = u^2$.
We observe that $2u \, \hat{\cM}(u) = \cM_1(w)$, where $\cM_{\zeta}(w)$ denotes the operator
\begin{equation}\label{eqn:cmcf-for-u2}\tag{$\textup{SMCF}_{\zeta}$}
    \cM_{\zeta}(w) := \partial_t w - \Delta w + 2(m-1) + \zeta(x,t)\frac{Q(w) + 2 \, |\nabla w|^2}{4 w + |\nabla w|^2}
\end{equation}
for a given spacetime function $\zeta(x,t)$.
Notably, $\cM_0(w)$ is a heat operator with constant forcing and the cylinder solutions correspond to $\varphi_{\lambda}(t) = \lambda - 2(m-1) t$, with $\cM_{\zeta} ( \varphi_{\lambda}) = 0$ for arbitrary $\zeta$.
As in graphical mean curvature flow, the operator $\cM_{\zeta}$ satisfies a comparison principle provided that $\zeta \leq 1$.
\begin{lemma}\label{lemma:comparison-principle} 
Consider two functions $w_1, w_2 \in C^2( \bar{W})$ defined on a spacetime domain
\[
    W := \{ (x,t) : x \in U, \;  q(x) < t < T \} \, .
\]
The parabolic boundary of $W$, denoted $\partial_p W$, is given by
\[
\partial_p W := \{ (x, q(x)) : x \in U \} \cup \{ (x,t) :x \in \partial U, \; q(x) \leq t < T \}.
\] 
Suppose that $\cM_{\zeta}(w_1) \leq \cM_{\zeta}(w_2)$ in $W$ and $w_1 \leq w_2$ on $\partial_p W$, with $\zeta \leq 1$.
    Then, $w_1 \leq w_2$ in $W$ and $w_1 - w_2$ cannot attain a zero maximum outside of the parabolic boundary $\partial_p W$ unless $w_1 = w_2$.
    Moreover, $w_1 - w_2 \leq \sup_{\partial_p W} (w_1 - w_2)$.
\end{lemma}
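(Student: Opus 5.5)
The plan is to reduce the statement to the classical maximum principle for a linear parabolic operator, obtained by linearizing $\cM_\zeta$ along the segment joining $w_2$ to $w_1$. Throughout I assume the standing hypotheses implicit in~\eqref{eqn:cmcf-for-u2}: $w_1,w_2>0$ on $\bar W$ (so $4w+|\nabla w|^2>0$), and $W$ is bounded with $T<\infty$, so that $C^2(\bar W)$ gives uniform bounds on $w_i$, $\nabla w_i$, $D^2 w_i$. If $U$ is unbounded, I would add a decay or boundedness assumption at spatial infinity and use a standard cut-off barrier.

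\textbf{Structure and ellipticity.} Write $\cM_\zeta(w)=\partial_t w - a^{ij}(x,t,w,\nabla w)\,w_{ij} + b(x,t,w,\nabla w)$, where
\[
a^{ij}=\delta_{ij}-\zeta\,\frac{w_iw_j}{4w+|\nabla w|^2},\qquad b=2(m-1)+\frac{2\zeta|\nabla w|^2}{4w+|\nabla w|^2}.
\]
The eigenvalues of $(a^{ij})$ are $1$ (on $\nabla w^\perp$) and $1-\zeta|\nabla w|^2/(4w+|\nabla w|^2)$. Since $\zeta\le 1$ and $w>0$, the latter is at least $4w/(4w+|\nabla w|^2)>0$. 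Hence $a^{ij}$ is uniformly positive definite on the compact range of $(w_s,\nabla w_s)$ for $w_s:=sw_1+(1-s)w_2$, $s\in[0,1]$, which stays positive. This is exactly where $\zeta\le 1$ is used.

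\textbf{Linearization.} Set $v=w_1-w_2$ and write $\cM_\zeta(w_1)-\cM_\zeta(w_2)=\int_0^1\frac{d}{ds}\cM_\zeta(w_s)\,ds$. This gives
\[
0\ge v_t-A^{ij}v_{ij}+B^iv_i+c\,v,
\]
with $A^{ij}=\int_0^1a^{ij}(w_s,\nabla w_s)\,ds$ uniformly elliptic, and $B^i,c$ bounded. These coefficients involve $D^2w_i$, which is bounded by the $C^2(\bar W)$ assumption. The zeroth-order coefficient $c=\int_0^1\partial_w(\cdots)\,ds$ has no sign, since $\partial_w$ of the $\zeta$-term is $-4\zeta(Q(w)+2|\nabla w|^2)/(4w+|\nabla w|^2)^2$. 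To neutralize it, put $\tilde v=e^{-\Lambda t}v$ with $\Lambda>\sup|c|$. Then $\tilde v$ satisfies the same inequality with $c$ replaced by $c+\Lambda>0$.

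\textbf{Conclusions.} \emph{Weak comparison.} Suppose $\tilde v$ had a positive maximum at a point of $W\setminus\partial_pW$, including possibly the top slice $t=T$. At that point $\tilde v_t\ge0$, $\nabla\tilde v=0$ and $D^2\tilde v\le0$, so the left side is at least $(c+\Lambda)\tilde v>0$, a contradiction. Hence $w_1\le w_2$ in $W$. The spacetime domain is bounded below by the graph $t=q(x)$, but every interior point can be reached from $\partial_pW$ by the usual argument, so no extra difficulty arises there.

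\emph{Strong version.} Suppose $v\le0$ attains the value $0$ at an interior point. On the set $\{v\le 0\}$ the term $c\,v$ can be moved to the form $c^+v - c^-v$, and for a nonpositive function whose maximum is zero the Hopf/Nirenberg strong maximum principle holds with no sign condition on $c$. It therefore propagates $v\equiv0$ backward in time along the connected region. This yields $w_1=w_2$ there.

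\emph{Quantitative bound.} For the estimate $w_1-w_2\le\sup_{\partial_pW}(w_1-w_2)$, the weighted argument directly gives only $v\le e^{\Lambda(t-t_0)}\sup_{\partial_pW}v^+$, which has the wrong constant. I would instead argue directly on $v$. At an interior maximum point of $v$ with value $s>0$, we have $\nabla w_1=\nabla w_2$, $D^2w_1\le D^2w_2$, $\partial_tw_1\ge\partial_tw_2$ and $w_1=w_2+s$. The first three, together with ellipticity, control the terms $\partial_t w - a^{ij}w_{ij}$. What remains is the sign of the pure $w$-dependence of $\zeta\,Q(w)/(4w+|\nabla w|^2)$ at fixed gradient. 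This is the step I expect to be the main obstacle. I would first try to exploit that, at fixed $p=\nabla w$, the whole $\zeta$-term equals $\zeta\,(p^TD^2w\,p+2|p|^2)/(4w+|p|^2)$, and to combine the $D^2$-ordering with the $w$-ordering into a single monotone quantity. If that fails, I would prove the estimate in the form actually needed later in the paper: namely with $w_2$ replaced by a solution for which the constant shift $w_2+s$ is again a supersolution, such as the cylinders $\varphi_\lambda$, where $\cM_\zeta(\varphi_\lambda+s)=0$. In that case the claim follows from the weak comparison already proved.
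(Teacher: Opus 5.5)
Your argument for the weak comparison and for the strong (zero-maximum) statement is correct, and it is the explicit form of what the paper does in one line. The paper notes that the smallest eigenvalue of $a^{ij}=\delta_{ij}-\zeta\,\frac{w_iw_j}{4w+|\nabla w|^2}$ is at least $\frac{4w+(1-\zeta)|\nabla w|^2}{4w+|\nabla w|^2}$, so $\cM_\zeta$ is quasilinear parabolic on $\{w>0\}$ when $\zeta\le 1$. It then invokes the standard comparison principle, citing Athanassenas--Kandanaarachchi for non-compact, non-cylindrical domains. Your linearization along $sw_1+(1-s)w_2$, the weight $e^{-\Lambda t}$ absorbing the unsigned zeroth-order coefficient, and Nirenberg's strong maximum principle applied with $c^+$ spell this out. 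The real difference is scope. You need $\bar W$ compact, whereas the paper applies the lemma on the steps $U_j\times(-t_j,-t_{j+1})$ with $U_j$ possibly unbounded, and covers that case only through the citation. Some control at spatial infinity is needed there in general, so your caveat is the honest one.

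On the ``Moreover'' clause, the obstacle you isolated cannot be removed: the clause is false as stated, even under the hypothesis $w_1\le w_2$ on $\partial_pW$. So no monotone combination of the kind you hoped for exists. The paper's appeal to the standard comparison principle does not yield it either, because the sharp constant requires a sign on $c=-4\zeta\int_0^1\frac{Q(w_s)+2|\nabla w_s|^2}{(4w_s+|\nabla w_s|^2)^2}\,ds$. Here is a counterexample. Take $n=1$, $\zeta\equiv 1$, $U=(\tfrac12,1)$, $q\equiv 0$, $T=1$, $w_1=100-2x^2$, and $w_2=w_1+1-\varepsilon t\phi(x)$ with $0\le\phi\in C^\infty_c(U)$, $\phi\not\equiv 0$. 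Since $Q(w_1)+2|\nabla w_1|^2=(w_1')^2(w_1''+2)=-32x^2<0$,
\[
\cM_1(w_1+1)-\cM_1(w_1)=\frac{128\,x^2}{(404+8x^2)(400+8x^2)}\ge c_0>0\quad\text{on }\bar U .
\]
Moreover $\cM_1(w_2)=\cM_1(w_1+1)+O(\varepsilon)$, so $\cM_1(w_1)<\cM_1(w_2)$ in $W$ for small $\varepsilon$. Yet $w_1-w_2=-1$ on $\partial_pW$, while $w_1-w_2=-1+\varepsilon t\phi>-1$ wherever $t\phi>0$.

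What survives are one-sided versions in which the shifted comparison function is still a sub- or supersolution. Your cylinder fallback is one of them. Another, covering the regime of the paper, is the following: suppose $0\le\zeta\le 1$, $w_1\le w_2$ on $\partial_pW$, and $Q(w_1)+2|\nabla w_1|^2\ge 0$ (e.g.\ $|D^2w_1|\le 2$). Set $s:=-\sup_{\partial_pW}(w_1-w_2)\ge 0$. Then $\cM_\zeta(w_1+s)\le\cM_\zeta(w_1)\le\cM_\zeta(w_2)$, and weak comparison gives $w_1-w_2\le\sup_{\partial_pW}(w_1-w_2)$. The paper itself only ever uses weak comparison ($w_-\le w_J\le w_+$), so nothing downstream depends on the false clause.
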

\begin{proof}
The operator $\hat{\cM_{\zeta}}$ of~\eqref{eqn:cmcf-for-u2} is expressible as $\cM_{\zeta}= \partial_t w - a^{ij}(w, \nabla w) w_{ij} + b(w, \nabla w)$, where $b(w, \nabla w) = 2(m-1) + 2 \zeta \frac{|\nabla w|^2}{4w + |\nabla w|^2}$ and $a^{ij}(w, \nabla w) = \delta_{ij} - \zeta \frac{w_i w_j}{4 w + |\nabla w|^2}$ has minimum eigenvalue $\geq 1 - \zeta \frac{|\nabla w|^2}{4 w + |\nabla w|^2} = \frac{4w + (1-\zeta) |\nabla w|^2}{4w + |\nabla w|^2}$, so it is positive semidefinite for $\zeta \leq 1$.
Consequently, the operator $\cM_{\zeta}$ is quasilinear parabolic on $\{ w > 0 \}$, so it satisfies the standard comparison principle.
This property holds for non-compact, non-cylindrical domains, including $W$, by work of Athanassenas-Kandanaarachchi \cite{a-k}*{Proposition 3.1 \& Appendix A}.
\end{proof}

In all the steps that follow, we fix an arbitrary non-empty closed subset $K \subset \bR^n$ and denote $\tau \in (0, \frac{1}{4} ]$ sufficiently small, to be chosen later, depending only on $n,m$.
\begin{definition}\label{def:k-cutoff-h}
    Let $K \subset \bR^n$ be an arbitrary non-empty closed subset. 
    There exist constants $C(j,k)$ such that for any $\tau \in \bigl(0, \frac{1}{4} \bigr]$, we have a cutoff function $h$ such that 
    \begin{equation}\label{eqn:h-properties}
    \begin{cases}
        h \equiv 0 &\text{on } K, \\
        h > 0 &\text{on }U = \bR^n \setminus K, \\
        \dfrac{|D^k h(x)|}{\on{dist}(x, \partial U)^j} \leq C(j,k) \, \tau & \text{for all } x \in \bR^n \; \text{and all } \; j,k \in \bN_0, \\
        |h|^{\frac{1}{4}} + |h|^{-\frac{3}{4}} |\nabla h| + |h|^{- \frac{1}{2}} |D^2 h| + |h|^{- \frac{1}{4}} |D^3 h| < \tau & \text{for all } x \in \bR^n.
    \end{cases}
    \end{equation}
\end{definition}
\noindent The existence of a function $\tilde{h}$ satisfying the first three properties with $\sum_{i=0}^3 |D^i \tilde{h}| < \frac{\tau}{10^3}$ is standard, cf.~\cites{lieberman-distance , stein-regularized-distance}.
Then, the function $h := \tilde{h}^4$ also satisfies the fourth property.
Working with such a function, the property~\eqref{eqn:h-properties} ensures that $h^{\frac{1}{4}}$ is $\tau$-Lipschitz.
We define the spacetime set
\begin{equation}\label{eqn:spacetime-set}
   W = \left\{ (x,t) \in \bR^n \times (-\infty, 0] : x \in U, \; - \tfrac{1}{100 m} h(x)^4 < t < 0 \right\}, \qquad U := \bR^n \setminus K. 
\end{equation}
The main result of this section, which forms a crucial piece of Theorem~\ref{thm:main-theorem}, is the existence of a solution to equation~\eqref{eqn:cmcf} in $W$ decaying exponentially to the cylinder.
\begin{theorem}\label{thm:solution-of-smcf-with-estimates}
    Let $\delta \in (0, \delta_0)$.
    There exists a $\tau_0(\delta, m,n) \in (0, \frac{1}{2} ]$ such that for $\tau \in (0,\tau_0]$, $h$ as in Definition~\ref{def:k-cutoff-h}, and $W$ as in \eqref{eqn:spacetime-set}, there exists a function $w_{\tau} \in C^{\infty}( \bar{W} \setminus (\partial U \times \{0\}) ) \cup C^0(\bar{W})$ that solves $\cM_1 (w) = 0$ and satisfies, for all $(x,t) \in W$,
    \begin{equation}\label{eqn:wt-conditions}
        \begin{split}
            &w_{\tau}^{-1}(x,t) \, |\nabla_x w_{\tau}(x,t)|^2 + |D^2_x w_{\tau}(x,t)| < \delta \qquad \text{and} \qquad \partial_t w_{\tau}(x,t) < -1, \\
            &\exp \bigl( - h(x)^{-2} \bigr)< w_{\tau}(x,t) + 2(m-1) t < \exp \bigl( - h(x)^{- \frac{11}{10}} \bigr).
        \end{split}
    \end{equation}
\end{theorem}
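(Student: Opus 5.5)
We would construct $w_\tau$ as a perturbation of the cylinder, $w_\tau = \varphi_0(t) + v(x,t)$ with $\varphi_0(t) = -2(m-1)t$. The perturbation $v$ is exponentially small in $h^{-1}$, and the plan has four steps: an explicit approximate solution, barriers, an exhaustion-and-limit argument, and derivative control.

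\emph{Step 1: the approximate solution.} We take the model
\[
v_0(x,t) = \exp\bigl(-h(x)^{-\frac32}\bigr)\,\psi(x,t),
\]
where $\psi$ is a slowly varying factor, for instance $\psi = e^{\alpha t h^{-\beta}}$ or simply $\psi\equiv 1$. Then $v_0$ lies strictly between $e^{-h^{-2}}$ and $e^{-h^{-11/10}}$ whenever $h$ is small, and $h\le\tau^4$ is small by \eqref{eqn:h-properties}.

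We next compute $\cM_1(\varphi_0+v_0)$. Since $\cM_0$ is the heat operator with constant forcing, $\cM_1(\varphi_0+v) = \partial_t v - \Delta v + \frac{Q(w)+2|\nabla w|^2}{4w+|\nabla w|^2}$. The derivatives of $e^{-h^{-3/2}}$ carry factors $h^{-5/2}\nabla h$, and these are controlled by the fourth line of \eqref{eqn:h-properties}, since $|\nabla h|\le \tau h^{3/4}$ and so on. This gives
\[
|\Delta v_0| \lesssim \tau^2 h^{-7/2} v_0 .
\]
The nonlinear term is quadratic in $\nabla v_0$ divided by $w \ge v_0$, so it is of size $\tau^2 h^{-7/2} v_0$ times a small factor.

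In $W$ we have $|t| < h^4/(100m)$. Hence $v_0 \ll |t|$ is not guaranteed, but $v_0 \ll h^{4}$ is: the exponential beats every power. This makes the quotient terms harmless away from $t=0$. Near $t=0$, where $w\approx v_0$, we bound the quotient by $|\nabla v|^2/v \lesssim \tau^2 h^{-7/2}v$.

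\emph{Step 2: barriers.} We build a supersolution $\bar w = \varphi_0 + e^{-h^{-3/2}}(1+Ah^{-\gamma}t)$ and a subsolution $\underline w = \varphi_0 + e^{-h^{-3/2}}(1-Ah^{-\gamma}t)$. In $\partial_t$ these produce $\mp A h^{-\gamma}e^{-h^{-3/2}}$. Choosing $\gamma > 7/2$ makes this term dominate the errors, and $|t| h^{-\gamma} \le h^{4-\gamma}$ stays small if $\gamma < 4$. Both barriers stay within the bounds in \eqref{eqn:wt-conditions}.

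\emph{Step 3: existence.} We exhaust $W$ by truncated domains $W_k$. These can be cylindrical pieces $\{x: h(x)>2^{-k}\}\times(-\tfrac{1}{100m}h^4,0)$, arranged as a staircase approximating the non-cylindrical bottom. On each piece we solve the quasilinear problem with boundary data $\varphi_0 + v_0$ on $\partial_p W_k$. Lemma~\ref{lemma:comparison-principle} confines the solution between $\underline w$ and $\bar w$. We then let $k\to\infty$, using local estimates and Arzel\`a--Ascoli.

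\emph{Step 4: derivative bounds (the main obstacle).} Uniform gradient and Hessian control is the hard part. Quasilinear estimates degenerate as $w\to 0$ near $t=0$, and there are no natural interior estimates near $\partial U\times\{0\}$. We plan to rescale parabolically around each point by the scale $\sqrt{w}$. In that scale the solution is $\delta$-close to the cylinder in $L^\infty$, by the barriers. The parabolic Savin small perturbation theorem then upgrades this to $C^{2,\alpha}$ closeness in the rescaled picture. This yields
\[
w^{-1}|\nabla w|^2 + |D^2 w| < \delta \qquad\text{and}\qquad \partial_t w < -2(m-1)+\delta < -1 .
\]

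The barrier gap must be small relative to $w$ in the appropriate scale. Near the bottom $t\approx -h^4/(100m)$ we have $w\approx h^4$, much larger than $v$, so the gap is negligible. Near $t=0$ the gap relative to $v$ is $Ah^{-\gamma}|t|$, which is small, so the relative $L^\infty$ closeness holds everywhere. The inductive staircase step uses Savin's theorem on each step to pass the estimates down to the next smaller domain.

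Continuity up to $\partial U\times\{0\}$ follows since $w\le\varphi_0+e^{-h^{-11/10}}\to 0$ there.
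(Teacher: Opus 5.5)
\textbf{The gap is in Step~3.} This is where the real difficulty of the theorem sits. You assert that the Dirichlet problem for $\cM_1$ can be solved on each truncated piece, but nothing in your argument supports this.

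The coefficients $a^{ij}=\delta_{ij}-\frac{w_iw_j}{4w+|\nabla w|^2}$ have smallest eigenvalue $\frac{4w}{4w+|\nabla w|^2}$. In terms of $u=\sqrt{w}$, this is exactly the graphical mean curvature flow structure $\delta_{ij}-\frac{u_iu_j}{1+|\nabla u|^2}$. So short-time existence plus continuation needs an a priori bound on $|\nabla w|^2/w$ up to the lateral boundary. For operators of this type, such boundary gradient bounds are not available without a geometric condition (mean convexity) on the domain.

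Your pieces have lateral boundary $\partial\{h>2^{-k}\}$, an arbitrary level set, and it extends all the way up to $t=0$, where $w\approx e^{-2^{3k/2}}$. At those points:
\begin{itemize}
\item your global barriers lie strictly above and below the data, so they give no boundary gradient bound;
\item Savin's theorem is unavailable, since the backward cylinder $P_{\sqrt{w}}(x_0,t_0)$ leaves the domain.
\end{itemize}
Your closing remark about passing estimates down a staircase does not repair this. Savin's theorem only controls points whose cylinder of radius $\sqrt{w}$ lies inside the region where the solution already exists and is trapped.

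\textbf{The missing idea is the paper's cutoff.} The paper replaces $\cM_1$ by $\cM_\zeta$ with $\zeta=\eta(t/h^4)$. This equals $\cM_1$ on $W$, but is the linear operator $\cM_0$ for $t\le-\frac{1}{60m}h(x)^4$. It then marches in time through steps $Q_j=U_j\times(-t_j,-t_{j+1})$:
\begin{itemize}
\item the lateral boundaries $\partial U_j\times[-t_j,-t_{j+1}]$, where $t/h^4\in[-1,-\frac12]$, lie entirely in the linear region;
\item the nonlinear region stays a definite distance from those boundaries (Lemma~\ref{lemma:Qj-geometry});
\item each new step gets its initial datum by gluing the previous terminal slice to $w_+$.
\end{itemize}
Gradient blow-up is then excluded by linear theory near the lateral boundaries, and by barriers plus Savin's theorem in the nonlinear region. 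This is why the barriers of Corollary~\ref{cor:general-barrier-functions} are built to work for every $\zeta\in[0,1]$.

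\textbf{Two smaller corrections.}
\begin{itemize}
\item Your barriers are misordered. For $t<0$ you have $\bar w-\underline w=2Ah^{-\gamma}t\,e^{-h^{-3/2}}<0$, so the supersolution lies below the subsolution and no datum fits between them. Replace $t$ by $t+ch^4$, with $c$ exceeding the depth of the domain (the paper uses $h^4+t$). After that the relative gap is $O(h^{4-\gamma})$, and your error count with $\frac72<\gamma<4$ goes through.
\item For $t_0$ near the bottom of $W$ one has $\rho_0^2=w(x_0,t_0)\approx\frac{2(m-1)}{100m}h^4$, so $P_{\rho_0}(x_0,t_0)$ exits $W$. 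The solution has to be built on a larger domain, such as the paper's $\tilde W=\{-\frac15h^4<t<0\}$, for the Savin step to cover all of $\bar W$.
\end{itemize}
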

The proof of this Theorem is given in Section~\ref{section:construction-of-a-solution}, after constructing an approximate solution and barrier functions for $\cM_1$ in Section~\ref{section:barriers}.
These results produce a fairly precise description of the solution $w_{\tau}$: it remains $C^k$-close to the cylinder $\varphi_1(s) = 1 - 2(m-1) s$ at scale $\rho_0 = \sqrt{w_{\tau}(x_0,t_0)}$.
First, we use Definition~\ref{def:k-cutoff-h} to collect some geometric properties of $h$ and the domain $W$.
\begin{lemma}\label{lemma:geometric-properties-of-h}
    For any $M > 0$, there exists a small $\tau_0(m,n,M)$ such that for any $\tau \in (0,\tau_0]$, we have
    \begin{align}
    &\rho_0 \leq M h(x_0)^{\frac{4}{3}} \implies \rho_0 \leq \bigl( 1 + \tau h(x_0) \bigr) \, M \inf_{x \in B_{\rho_0}(x_0)} h(x)^2, \label{eqn:r0-inequality} \\
    &(1 - \tau h(x_0) \bigr) \, h(x_0)^2 \leq h(x)^2 \leq (1 - \tau h(x_0) \bigr) \, h(x_0)^2 \qquad \text{in } \; B_{\rho_0}(x_0). \label{eqn:small-enclosure-h(x0)}
    \end{align}
    Moreover, for $c < \frac{1}{10 m}$ and any spacetime point $(x_0,t_0)$ with $t_0 > - c h(x_0)^4$, we have
    \begin{equation}\label{eqn:cylinder-containment-in-enlarges}
    P_{\rho_0}(x_0,t_0) := B_{\rho_0}(x_0) \times [t_0 - r^2, t_0] \subset \{ (x,t) : x \in U, - 4 mc \, h(x)^4 < t <  0 \}
    \end{equation}
    whenever $\rho_0 \leq \sqrt{2mc} \, h(x_0)^2$.
\end{lemma}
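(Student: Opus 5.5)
The whole argument rests on the fourth line of \eqref{eqn:h-properties}. It gives $|\nabla h| < \tau h^{3/4}$, so $h^{1/4}$ is $\tau$-Lipschitz, and $h^{1/4}<\tau$, so $h<\tau^4$ is tiny. I will first prove \eqref{eqn:small-enclosure-h(x0)}, reading its right-hand side as $(1+\tau h(x_0))h(x_0)^2$; the printed $(1-\ldots)$ on both sides is presumably a typo. Then I derive \eqref{eqn:r0-inequality} from it, and finally check the containment \eqref{eqn:cylinder-containment-in-enlarges} by a direct comparison of times.

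\textbf{Step 1: the two-sided bound.} Fix $x\in B_{\rho_0}(x_0)$. By the Lipschitz property,
\[
|h(x)^{1/4}-h(x_0)^{1/4}|\le \tau\rho_0 .
\]
In the regime of interest $\rho_0\lesssim h(x_0)^{4/3}$; in the containment statement it is even $\lesssim h(x_0)^2$. Then
\[
\tau\rho_0 \le \tau M h(x_0)^{4/3} = \bigl(\tau M h(x_0)^{13/12}\bigr)\,h(x_0)^{1/4}.
\]
Raise $h(x)^{1/4}=h(x_0)^{1/4}\bigl(1+O(\tau M h(x_0)^{13/12})\bigr)$ to the eighth power. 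This gives
\[
h(x)^2=h(x_0)^2\bigl(1+O(8\tau M h(x_0)^{13/12})\bigr).
\]
Since $h(x_0)^{1/12}<\tau^{1/3}$ is small, the error is at most $\tau h(x_0)$ once $\tau_0(M)$ is small. For a sharper constant, one can instead apply the mean value theorem to $h^2$ directly, using $|\nabla h^2|=2h|\nabla h|\le 2\tau h^{7/4}$ along the segment; a continuity (bootstrap) argument keeps $h$ comparable to $h(x_0)$ on the segment.

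\textbf{Step 2: inequality \eqref{eqn:r0-inequality}.} The hypothesis $\rho_0\le M h(x_0)^{4/3}$ must be upgraded to $\rho_0\le (1+\tau h(x_0))M\inf h^2$. This looks off by a power, since $h^{4/3}\gg h^2$ for small $h$, so as literally written it cannot follow. I would therefore interpret the statement as the corresponding one with matching exponents, namely $\rho_0\le M h(x_0)^2\Rightarrow \rho_0\le(1+\tau h(x_0))M\inf_{B_{\rho_0}}h^2$. Equivalently, use the hypothesis only to place oneself in the Step 1 regime, and then use Step 1's lower bound $\inf h^2\ge (1-\tau h(x_0))h(x_0)^2$ together with $(1-a)^{-1}\le 1+2a$. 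That gives the inequality with constant $1+2\tau h$, which is absorbed by shrinking $\tau_0$ or adjusting constants. I expect this exponent bookkeeping to be the main obstacle. Whichever reading the author intends, the mechanism is the same: Lipschitz control of $h^{1/4}$ makes $h$ essentially constant on balls of radius much smaller than $h^{1/4}/\tau$.

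\textbf{Step 3: containment \eqref{eqn:cylinder-containment-in-enlarges}.} Take $\rho_0\le\sqrt{2mc}\,h(x_0)^2$, so Step 1 applies. First, $h>0$ on $B_{\rho_0}(x_0)$, hence this ball lies in $U$. Next, for $t\in[t_0-\rho_0^2,t_0]$ we have $t\le t_0<0$ and
\[
t\ge t_0-\rho_0^2> -c\,h(x_0)^4-2mc\,h(x_0)^4\ge -(1+2m)c\,h(x_0)^4 .
\]
Step 1 gives $h(x)^4\ge(1-\tau h(x_0))^2h(x_0)^4$, so
\[
(1+2m)\,c\,h(x_0)^4\le \frac{1+2m}{(1-\tau h(x_0))^{2}}\,c\,h(x)^4 < 4m\,c\,h(x)^4,
\]
using $m\ge 2$ and $\tau$ small. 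This yields $t>-4mc\,h(x)^4$. The assumption $c<1/(10m)$ only guarantees that the enlarged set still sits in the region where $4mc\,h^4<h^4$, which is consistent with $W$.
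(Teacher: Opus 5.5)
Your proof is correct and follows the paper's. Both use the $\tau$-Lipschitz bound on $h^{1/4}$ with $\rho_0\le Mh(x_0)^{4/3}$ to get the two-sided comparison, deduce \eqref{eqn:r0-inequality} from the lower bound, and check the containment directly via $t\ge t_0-\rho_0^2>-(1+2m)c\,h(x_0)^4$. Your readings of the typos are the right ones: upper factor $1+\tau h(x_0)$, exponent $2$ on both sides of \eqref{eqn:r0-inequality} as in its use in Proposition~\ref{prop:apply-small-perturbation}, and $r=\rho_0$. The factor $1+2\tau h(x_0)$ in Step 2 is unnecessary, since the true error in Step 1 is $O(\tau M h(x_0)^{13/12})\ll\tau h(x_0)$, and inverting it directly already gives $1+\tau h(x_0)$.
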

\begin{proof}
The function $h^{\frac{1}{4}}$ is $\tau$-Lipschitz by Definition~\ref{def:k-cutoff-h}, so for any $x \in B_{\rho_0}(x_0)$ we have
\begin{align*}
    &|h(x)^{\frac{1}{4}} - h(x_0)^{\frac{1}{4}}| \leq \tau \rho_0 \leq \tau M h(x_0)^{\frac{4}{3}} \implies \bigl( 1 - \tau  h(x_0)^{\frac{13}{12}} \bigr)^4 h(x_0) \leq h(x) \leq \bigl( 1 + \tau  h(x_0)^{\frac{13}{12}} \bigr)^4 h(x_0)
\end{align*}
from which the bound~\eqref{eqn:small-enclosure-h(x0)} follows, also proving~\eqref{eqn:r0-inequality}.
For the property~\eqref{eqn:cylinder-containment-in-enlarges}, we use~\eqref{eqn:small-enclosure-h(x0)} to find $h(x)^4 > (1 - 4 \tau h(x_0)) h(x_0)^4$ for all $x \in B_{\rho_0}(x_0)$.
We therefore obtain, for all $(x,t) \in P_{\rho_0}(x_0,t_0)$,
\[
|t| \leq |t_0 - \rho_0^2| < c h(x_0)^4 + \rho_0^2 < c h(x_0)^4 + 2m c h(x_0)^4 < 3mc (1 - 4\tau h(x_0) ) h(x_0)^4,
\]
for $|h|_{L^{\infty}(U)} \leq \tau_0$ small.
Thus, $|t| < 4mc \, h(x)^4$ proves the claimed containment~\eqref{eqn:cylinder-containment-in-enlarges}.
\end{proof}

\subsection{Construction of barriers}\label{section:barriers}

We first construct upper and lower barriers $w_{\pm}$ for the equation $\cM_{\zeta}(w) = 0$, with arbitrary $0 \leq \zeta \leq 1$, that remain close to the cylinder $\varphi_1(s)$ in a rescaled sense.
Next, we will produce a solution $w_{\tau}$ of the modified equation $\cM_{\zeta} (w) = 0$ that remains close to $\varphi_1(s)$ at scale $\rho_0 = \sqrt{w_{\tau}(x_0,t_0)}$, allowing us to invoke the parabolic counterpart of Savin's \textit{small perturbation theorem}~\cites{savin-small-perturbation , small-perturbation} to prove that $w_{\tau}$ is smooth and satisfies the estimates~\eqref{eqn:wt-conditions}.

\begin{lemma}\label{lemma:subsolution}
    Let $F(x,t)$ be a positive function defined on an open subset of $\bR^n_x \times (-\infty,0]_t$ and satisfying $|D^2 F| \leq 1$.
    Then, for any function $\zeta$ with $0 \leq \zeta \leq 1$, we have that
    \begin{align}
        \tfrac{1}{n} \partial_t F > |D^2 F| + F^{-1} |\nabla F|^2 \quad & \implies \quad \cM_{\zeta}( - 2(m-1) t + F) > 0, \label{eqn:barrier-condition-supersolution} \\
        - \tfrac{1}{n} \partial_t F < |D^2 F| + F^{-1} |\nabla F|^2 \quad & \implies \quad \cM_{\zeta}( - 2(m-1) t + F) < 0 \, . \label{eqn:barrier-condition-subsolution}
    \end{align}
\end{lemma}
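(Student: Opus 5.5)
Set $w = -2(m-1)t + F$ and substitute directly into the operator \eqref{eqn:cmcf-for-u2}. Since $\nabla w = \nabla F$, $D^2 w = D^2 F$, and $\partial_t w = -2(m-1) + \partial_t F$, the constant forcing $2(m-1)$ cancels the time derivative of the cylinder part. This leaves
\[
\cM_{\zeta}(w) = \partial_t F - \Delta F + \zeta \, \frac{Q(F) + 2|\nabla F|^2}{4w + |\nabla F|^2}.
\]
The plan is to show that everything except $\partial_t F$ is bounded in absolute value by $n\bigl(|D^2F| + F^{-1}|\nabla F|^2\bigr)$. The two implications then follow immediately. If $\partial_t F > n(|D^2F| + F^{-1}|\nabla F|^2)$, then $\cM_\zeta(w) > 0$. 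In the second implication the hypothesis should be read as $\partial_t F < -n(|D^2F| + F^{-1}|\nabla F|^2)$, which is what the sign convention makes it, and this gives $\cM_\zeta(w) < 0$.

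\textbf{Step 1: the Laplacian term.} Use $|\Delta F| \le \sqrt{n}\,|D^2F| \le n|D^2 F|$, with $|D^2F|$ the Frobenius or operator norm; either works up to the factor $n$.

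\textbf{Step 2: the nonlinear term.} Use $0 \le \zeta \le 1$ and bound the denominator from below. On the domain of interest $t \le 0$, so $-2(m-1)t \ge 0$ and hence $w \ge F > 0$. Therefore $4w + |\nabla F|^2 \ge 4F + |\nabla F|^2$. For the numerator, $|Q(F)| \le |D^2F|\,|\nabla F|^2$. This gives
\[
\Bigl|\frac{Q(F) + 2|\nabla F|^2}{4w+|\nabla F|^2}\Bigr| \le \frac{|D^2F|\,|\nabla F|^2}{|\nabla F|^2} + \frac{2|\nabla F|^2}{4F} \le |D^2 F| + \tfrac12 F^{-1}|\nabla F|^2 .
\]
For $Q$ we used the denominator $\ge |\nabla F|^2$ (if $\nabla F = 0$ the term vanishes). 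For the gradient term we used the denominator $\ge 4F$. The hypothesis $|D^2F| \le 1$ is not strictly needed here, but it keeps everything comfortably bounded. One could alternatively write $|D^2F|\,|\nabla F|^2/(4F) \le \tfrac14 F^{-1}|\nabla F|^2$ using $|D^2F|\le 1$, which is probably the author's intended use of this hypothesis.

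\textbf{Step 3: combine.} Adding the two bounds gives
\[
\Bigl|-\Delta F + \zeta\,\frac{Q(F)+2|\nabla F|^2}{4w+|\nabla F|^2}\Bigr| \le (\sqrt n + 1)|D^2F| + \tfrac12 F^{-1}|\nabla F|^2,
\]
which is at most $n(|D^2F| + F^{-1}|\nabla F|^2)$ whenever $n \ge 2$.

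The main obstacle is the case $n = 1$, where $\sqrt n + 1 = 2 > n$ and the constant in Step 3 is not good enough. For $n=1$ I would sharpen Step 2. Write $a = |D^2F|$ and $b = F^{-1}|\nabla F|^2$, so the nonlinear term is
\[
\frac{(F'' + 2)\,bF}{4w + bF} \le \frac{(a+2)\,b}{4 + b}.
\]
The goal is then to check that $a + \frac{(a+2)b}{4+b} < a + b$ when $b > 0$, which holds since $(a+2)/(4+b) < 1$ because $a \le 1$. This is precisely where the hypothesis $|D^2F| \le 1$ enters. When $b = 0$ the nonlinear term vanishes and the bound is trivial. Strictness of the final inequalities comes from the strict hypotheses.
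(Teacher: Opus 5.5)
Your substitution $w=-2(m-1)t+F$ is right, and so is your reading of the second implication as $-\tfrac1n\partial_t F > |D^2F| + F^{-1}|\nabla F|^2$; the paper's proof uses it this way through $\operatorname{sgn}(\partial_t F)$. However, the main line of Step 3 does not close.

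First, $\sqrt n + 1 \le n$ is false for $n=2$, since $\sqrt2+1\approx 2.41$. So your bound $(\sqrt n+1)|D^2F| + \tfrac12 F^{-1}|\nabla F|^2$ does not fit under $n(|D^2F| + F^{-1}|\nabla F|^2)$ once $F^{-1}|\nabla F|^2$ is small compared with $|D^2F|$.

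More fundamentally, the $\sqrt n$ in Step 1 holds only for the Frobenius norm. The paper's proof fixes $|D^2F|$ as the operator norm, which also makes the lemma strongest. For that norm, $|\Delta F|\le n|D^2F|$ is sharp (take $D^2F=I$), so the Laplacian alone uses up the entire Hessian budget $n|D^2F|$. Charging the $Q$-term to $|D^2F|$, as you do in Step 2, then produces $(n+1)|D^2F| + \tfrac12F^{-1}|\nabla F|^2$. This exceeds $n(|D^2F| + F^{-1}|\nabla F|^2)$ whenever $|D^2F| > (n-\tfrac12)F^{-1}|\nabla F|^2$, for every $n\ge 1$.

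The fix is already in your $n=1$ paragraph, and it is exactly the paper's argument. For every $n$, bound the entire nonlinear term using the denominator $4w+|\nabla F|^2\ge 4w \ge 4F$ together with $|D^2F|\le 1$:
\[
\frac{|Q(F)| + 2|\nabla F|^2}{4w + |\nabla F|^2} \le \frac{(|D^2F| + 2)\,|\nabla F|^2}{4F} \le \tfrac34\, F^{-1}|\nabla F|^2 .
\]
This charges the $Q$-contribution to the gradient budget rather than the Hessian budget. The total error is then at most $n|D^2F| + \tfrac34 F^{-1}|\nabla F|^2 \le n\bigl(|D^2F| + F^{-1}|\nabla F|^2\bigr)$, with no case distinction in $n$ and for either norm.

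This is precisely where the hypothesis $|D^2F|\le 1$ enters. Your remark that it is ``not strictly needed'' in Step 2 is what sends the main line astray. With this change, the rest of your argument, including the strictness coming from the strict hypotheses, goes through.
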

\begin{proof}
    For any function $w$, we denote by $|D^2 w|$ the operator norm of the Hessian and bound 
    \begin{equation}\label{eqn:control-nonlinear-terms}
\begin{split}
    - |\nabla w|^2 |D^2 w| &\leq Q(w) \leq |\nabla w|^2 |D^2 w|, \qquad \text{and} \qquad - n |D^2 w| \leq \Delta w \leq n |D^2 w|, \\
    &\left| \cM_{\zeta} w - \partial_t w - 2(m-1) \right| \leq |\Delta w| + \frac{|Q(w)| + 2 |\nabla w|^2}{4 w}.
\end{split}
\end{equation}
Taking the function $w = - 2(m-1) t + F$, we compute $\partial_i w = \partial_i F$ and $\partial_t w + 2(m-1) = \partial_t F$.
Combining this inequality with the bounds $w \geq F$ and $|D^2 F| \leq 1$, we find
\[
|\Delta w| + \tfrac{1}{4w} ( Q(w) + 2 |\nabla w|^2 ) \leq n \, |D^2 F| + F^{-1} \bigl( |D^2 F| + 1) |\nabla F|^2  \leq n |D^2 F| + |F|^{-1} |\nabla F|^2.
\]
Using the estimate~\eqref{eqn:control-nonlinear-terms}, we conclude that $\frac{1}{n} |\cM_{\zeta} w - F| \leq |D^2 F| + |F|^{-1} |\nabla F|^2$, hence
\[
\on{sgn}(\partial_t F) \cM_{\zeta}w > \on{sgn}(\partial_t F) \partial_t F - n ( |D^2F| + F^{-1} |\nabla F|^2).
\]
We can now apply the bound~\eqref{eqn:barrier-condition-supersolution} or~\eqref{eqn:barrier-condition-subsolution} for $F$ to obtain $\on{sgn}(\partial_t F) \cM_{\zeta}w > 0$ as claimed.
\end{proof}

\begin{corollary}\label{cor:general-barrier-functions}
For $\tau \in (0,\tau_0]$ sufficiently small, we have functions $w_{\pm}$ with
\begin{align*}
    w_{\pm} &:= -2(m-1)t + \,F_{\pm}, \qquad F_{\pm} (x,t) := \bigl[ h(x)^3 \pm ( h(x)^4 + t) \bigr] \exp( - h(x)^{- \frac{9}{8}}), \\
    &\cM_{\zeta}( w_- ) < 0 < \cM_{\zeta}( w_+) \qquad \text{in } \; \{ (x,t) : - h(x)^4 < t < 0 \}, \qquad \text{for any } \; 0 \leq \zeta \leq 1.
\end{align*}
\end{corollary}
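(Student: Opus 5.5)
The plan is to apply Lemma~\ref{lemma:subsolution} directly to $F = F_\pm$. This means checking three things on the region $\{-h^4<t<0\}$ (which lies in $U$, where $h>0$): that $F_\pm>0$, that $|D^2F_\pm|\le 1$, and the two differential inequalities \eqref{eqn:barrier-condition-supersolution} and \eqref{eqn:barrier-condition-subsolution}. Write $E(x):=\exp(-h(x)^{-9/8})$ and $A_\pm := h^3 \pm (h^4+t)$, so that $F_\pm = A_\pm E$ and $\partial_t F_\pm = \pm E$.

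\emph{Positivity.} On the region we have $0<h^4+t<h^4$. Hence $A_+\in(h^3,\,h^3+h^4)$ and $A_-\in(h^3-h^4,\,h^3)$. Since $h<\tau^4$, both satisfy $A_\pm \ge \tfrac12 h^3>0$, and therefore $F_\pm>0$.

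\emph{Derivative bounds.} The inequalities to be shown are
\[
|D^2F_\pm| + F_\pm^{-1}|\nabla F_\pm|^2 < \tfrac1n E .
\]
\begin{itemize}
\item \emph{Derivatives of $E$.} We have $\nabla E = \tfrac98 h^{-17/8}\nabla h\, E$. By the fourth line of \eqref{eqn:h-properties}, $|\nabla h|<\tau h^{3/4}$, so $|\nabla E|\lesssim \tau h^{-11/8}E$. Similarly, $D^2E$ involves $h^{-17/4}|\nabla h|^2$, $h^{-25/8}|\nabla h|^2$ and $h^{-17/8}|D^2h|$ times $E$. Using $|D^2h|<\tau h^{1/2}$, this gives $|D^2E|\lesssim \tau^2 h^{-11/4}E + \tau h^{-13/8}E$.
\item \emph{Derivatives of $A_\pm$.} We have $|\nabla A_\pm|\lesssim h^2|\nabla h|\lesssim \tau h^{11/4}$ and $|D^2A_\pm|\lesssim h|\nabla h|^2+h^2|D^2h|\lesssim \tau h^{5/2}$.
\item \emph{Product rule.} Combining these with $A_\pm\lesssim h^3$ gives
\[
|D^2F_\pm| \lesssim \bigl(h^3\cdot \tau h^{-11/4} + \tau h^{11/4}\cdot \tau h^{-11/8} + \tau h^{5/2}\bigr)E \lesssim \tau h^{1/4} E .
\]
Likewise $|\nabla F_\pm|\lesssim \tau h^{13/8}E$. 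With $F_\pm\ge \tfrac12 h^3E$, this yields $F_\pm^{-1}|\nabla F_\pm|^2 \lesssim \tau^2 h^{1/4}E$.
\end{itemize}
Taking $\tau_0$ small (and using $h\le\tau^4$), both quantities are $<\tfrac1{2n}E$. This also gives $|D^2F_\pm|\le1$.

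\emph{Conclusion.} For $F_+$ we have $\tfrac1n\partial_tF_+ = \tfrac1n E$, which exceeds the left side, so \eqref{eqn:barrier-condition-supersolution} gives $\cM_\zeta(w_+)>0$. For $F_-$ we have $-\tfrac1n\partial_tF_- = \tfrac1nE$. The sign convention needed in the proof of Lemma~\ref{lemma:subsolution} for the subsolution case is $\partial_tF<0$ with $|\partial_tF|$ dominating the error terms, which is exactly what we verified. Hence $\cM_\zeta(w_-)<0$. This is done carefully by repeating the last display of that proof with $\operatorname{sgn}(\partial_tF_-)=-1$.

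The main obstacle is the derivative bookkeeping for $E$. Each derivative of $E$ costs roughly $h^{-17/8}|\nabla h|$, and the argument hinges on the powers in the fourth line of \eqref{eqn:h-properties} being strong enough that $h^3$ absorbs the loss $h^{-11/4}$ with room to spare. The exponent $\tfrac98$, only slightly above $1$, is exactly what makes this margin positive. I would carefully track the worst term, $h^{-17/4}|\nabla h|^2 h^3E\lesssim \tau^2 h^{1/4}E$, to confirm it.
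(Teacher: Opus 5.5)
Your proposal is correct and follows the paper's proof. Both verify the hypotheses of Lemma~\ref{lemma:subsolution} for $F_\pm$ by bounding $|D^2F_\pm|+F_\pm^{-1}|\nabla F_\pm|^2$ against $\partial_tF_\pm=\pm\exp(-h^{-9/8})$ using the fourth line of \eqref{eqn:h-properties}; the paper packages this bookkeeping as the one-variable inequality $C\phi\ge s^{5/2}\bigl[\phi+s\phi'+s^2(\phi^{-1}(\phi')^2+\phi'')\bigr]$ for $\phi(s)=\exp(-s^{-9/8})$, whereas you track the powers of $h$ directly and arrive at the same worst-case gain $h^{1/4}$. You are also right to read \eqref{eqn:barrier-condition-subsolution} as requiring $-\tfrac1n\partial_tF$ to dominate the error terms, which is what the sign argument in that lemma's proof actually uses.
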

\begin{proof}
    We will prove the claimed properties of $w_{\pm}$ by verifying the conditions~\eqref{eqn:barrier-condition-supersolution} and~\eqref{eqn:barrier-condition-subsolution} for $F_{\pm}$.
    Writing $\phi(s) := \exp( - s^{- \frac{9}{8}})$, with $\phi,\phi',\phi'' > 0$, we compute that $\partial_t F_{\pm} = \pm \phi$.
    Using the chain rule together with $0 \leq h(x)^4 + t \leq h(x)^4$ and $|h| < \tau$ in $W$, we can bound $F_{\pm} \geq \tfrac{1}{2} h(x)^3 \phi(h(x))$ and
\begin{align*}
    |F_{\pm}|^{-1} |\nabla F_{\pm}|^2 &\leq 200 h(x) \phi^{-1} ( \phi^2 + h(x)^2 (\phi')^2 ) |\nabla h|^2, \\
    |D^2 F_{\pm}| &\leq 40 h(x) \left[ ( \phi + h(x) \phi' + h(x)^2 \phi'') |\nabla h|^2 + h(x) ( \phi + h(x) \phi') |D^2 h| \right].
\end{align*}
Consequently, the properties~\eqref{eqn:barrier-condition-supersolution} and~\eqref{eqn:barrier-condition-subsolution} will be proved upon establishing that
    \[
\tfrac{1}{400 n} \phi \geq h(x) \left[  ( \phi + h(x) \phi' ) h |D^2 h| + \left( \phi + h(x) \phi'+ h(x)^2 \phi^{-1}(\phi')^2 + h(x)^2  \phi'' \right) |\nabla h|^2 \right].
\]
Recall from ~\eqref{eqn:h-properties} that $\sum_{k=0}^3 |h|^{\frac{1}{4} -k} |D^k h| < \tau$.
    Denoting $s := h(x)$ and taking $\tau$ sufficiently small in Definition~\ref{def:k-cutoff-h}, we therefore see that the desired properties~\eqref{eqn:barrier-condition-supersolution} and~\eqref{eqn:barrier-condition-subsolution} will follow from
    \[
    C \phi \geq s^{\frac{5}{2}} \left[ \phi + s \phi' + s^2 ( \phi^{-1} (\phi')^2 +  \phi'' )\right]
    \]
    for $C$ a universal constant and $s \in (0,\tau)$ sufficiently small; this holds for $\phi(s) = \exp( - s^{- \frac{9}{8}})$.
    Since the inequalities~\eqref{eqn:barrier-condition-supersolution} and~\eqref{eqn:barrier-condition-subsolution} are invariant under multiplication by small constants, the claim follows from Lemma~\ref{lemma:subsolution} after taking $\tau$ sufficiently small to make $|D^2 F_{\pm}| < 1$.
\end{proof}

\begin{lemma}\label{lemma:tight-barrier-estimates}
For any $a \in (0,1)$, there is a small $\tau_0(m,n,a)$ and some $c,C(m,n,a,U)$ such that on any parabolic cylinder $P_{\rho_0}(x_0,t_0) := B_{\rho_0}(x_0) \times [t_0 - \rho_0^2, t_0] \subset W$ as in~\eqref{eqn:spacetime-set}, we have the inequalities
\begin{equation}\label{eqn:u-plus-u-minus-refined-inequality}
    \begin{split}
        w_-(x,t) &\geq \bigl( 1 - C h(x_0)^a \bigr) \, \left[ w_+(x_0,t_0) - 2 (m-1) (t - t_0) \right] \, , \\
        w_+(x,t) &\leq \bigl( 1 + C h(x_0)^a \bigr) \, \left[ w_-(x_0,t_0) - 2(m-1) (t - t_0) \right] \, ,
    \end{split}
    \end{equation}
    whenever $\rho_0 \leq c \min \{ h(x_0)^{a + \frac{11}{8}} , h(x_0)^{\frac{a+3}{2}} \}$.
    In particular, this holds for $a = \frac{1}{10}$ and $\rho_0 < h(x_0)^{\frac{9}{5}}$.
\end{lemma}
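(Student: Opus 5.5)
The plan is to write everything in terms of the explicit barriers. Set $\phi(s)=\exp(-s^{-9/8})$ as in Corollary~\ref{cor:general-barrier-functions}, and write $\psi_\pm(x,t) := h(x)^3 \pm (h(x)^4+t)$. Then $w_\pm = -2(m-1)t + \psi_\pm \phi(h)$. Subtracting the common linear term $-2(m-1)(t-t_0)$ from both sides, the first inequality in \eqref{eqn:u-plus-u-minus-refined-inequality} becomes
\[
-2(m-1)t_0 + F_-(x,t) \geq \bigl(1 - Ch(x_0)^a\bigr)\bigl[-2(m-1)t_0 + F_+(x_0,t_0)\bigr] - Ch(x_0)^a\,\bigl(-2(m-1)(t-t_0)\bigr).
\]
Since $t\le t_0$, the last term has a favorable sign only after we absorb it; it is bounded by $C h(x_0)^a \rho_0^2$. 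So it suffices to prove two things:
\[
F_+(x_0,t_0) - F_-(x,t) \leq C h(x_0)^a \bigl[-2(m-1)t_0 + F_+(x_0,t_0)\bigr],
\qquad
\rho_0^2 \lesssim h(x_0)^a \bigl[-2(m-1)t_0 + F_+(x_0,t_0)\bigr].
\]
The right-hand sides are bounded below by $\tfrac12 h(x_0)^3\phi(h(x_0))$. The second inequality is symmetric.

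The second bound is immediate. We have $\rho_0^2 \le c^2 h(x_0)^{a+3}$, and $\phi(h(x_0))$ is exponentially small, so the right-hand side is not automatically large. However, $-2(m-1)t_0 \ge 2(m-1)\rho_0^2$, because $t_0-\rho_0^2 \ge -h^4 \cdot(\text{small})$ and $t\le 0$ force $|t_0|\ge \rho_0^2$ up to constants. Hence $\rho_0^2 \le h(x_0)^a\cdot(-2(m-1)t_0)$ once $h(x_0)^a \gtrsim$ const. If this fails, we combine instead with $\rho_0^2 \le c^2h^{a+3}\cdot$. I expect to use the hypothesis $\rho_0\le c\,h(x_0)^{(a+3)/2}$ exactly here, so that the correction term is controlled. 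I would first try to show $|t_0| \ge \rho_0^2$ directly from $P_{\rho_0}\subset W$ with $t<0$. This gives $(-t_0)h^a \ge \rho_0^2 h^a$, which is not quite enough. So the honest route is to use $\rho_0^2 \leq c^2 h^{a+3}$ against $h^3\phi$ together with the term $-2(m-1)t_0$. I will have to check which of the two dominates, and this case split may be needed.

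The main step is the first bound, which I split as
\[
F_+(x_0,t_0)-F_-(x,t) = \bigl[F_+(x_0,t_0)-F_-(x_0,t_0)\bigr] + \bigl[F_-(x_0,t_0)-F_-(x,t)\bigr].
\]
The first bracket equals $2(h^4+t_0)\phi(h(x_0)) \le 2h(x_0)^4\phi \le 4h(x_0)\cdot F_+(x_0,t_0)$, which is fine since $a<1$. For the second bracket, the time variation is $|t-t_0|\,\phi \le \rho_0^2\phi$. The space variation is controlled by the mean value theorem with the gradient bound $|\nabla (\psi\,\phi(h))| \lesssim (h^2 + h^3\phi'/\phi)\,|\nabla h|\,\phi$, evaluated on $B_{\rho_0}(x_0)$. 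Here \eqref{eqn:small-enclosure-h(x0)} of Lemma~\ref{lemma:geometric-properties-of-h} lets us replace $h(x)$ by $h(x_0)$ up to factors $1\pm\tau h(x_0)$. The key point is the ratio $\phi(h(x))/\phi(h(x_0))$. Since $\phi'/\phi = \tfrac98 s^{-17/8}$ and $|h(x)-h(x_0)| \lesssim \tau h(x_0)^{3/4}\rho_0$ (because $|\nabla h|\le\tau h^{3/4}$), the log-ratio is bounded by $h(x_0)^{-17/8} h(x_0)^{3/4}\rho_0 = h^{-11/8}\rho_0 \le c\,h^{a}$. This is exactly where the hypothesis $\rho_0 \le c\,h(x_0)^{a+11/8}$ enters, and it gives $\phi(h(x)) = (1+O(h(x_0)^a))\,\phi(h(x_0))$. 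Similarly $h(x)^3 = (1+O(h^a))\,h(x_0)^3$. Collecting terms yields the first bound, and hence \eqref{eqn:u-plus-u-minus-refined-inequality}.

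The main obstacle is this exponential sensitivity of $\phi$, which the choice of exponent $a+\tfrac{11}{8}$ is designed to absorb. Finally, for $a=\tfrac1{10}$ we have $\min\{h^{1.475}, h^{1.55}\} = h^{1.55} \ge h^{9/5}/c$ for small $h<\tau_0$. So $\rho_0<h(x_0)^{9/5}$ suffices.
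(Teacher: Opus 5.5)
Your main step is the paper's argument, and your first bound is correct and suffices. You split $F_+(x_0,t_0)-F_-(x,t)$ into $2(h(x_0)^4+t_0)\phi(h(x_0))$ plus the variation of $F_-$. You then use the mean value theorem with $|\nabla h|\le\tau h^{3/4}$. The exponential sensitivity $|h(x)^{-9/8}-h(x_0)^{-9/8}|\lesssim h(x_0)^{-11/8}\rho_0$ is absorbed by $\rho_0\le c\,h(x_0)^{a+11/8}$, and the time variation $\rho_0^2\phi(h(x_0))$ by $\rho_0\le c\,h(x_0)^{(a+3)/2}$.

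The gap is in the reduction before it. After subtracting $-2(m-1)(t-t_0)$ from both sides of the first inequality, the leftover term on the right is $-2(m-1)Ch(x_0)^a(t_0-t)$. This is $\le 0$ because $t\le t_0$, so it already has the favorable sign and can simply be discarded. In the second inequality, the corresponding term $+2(m-1)Ch(x_0)^a(t_0-t)\ge 0$ sits on the larger side, so it can be dropped too. The paper even discards the whole term $-2(m-1)Ch(x_0)^a t\ge 0$, reducing directly to $F_-(x,t)\ge(1-Ch(x_0)^a)F_+(x_0,t_0)$ and its mirror image. You should delete your sentence saying this term must be absorbed, and with it your second requirement $\rho_0^2\lesssim h(x_0)^a[-2(m-1)t_0+F_+(x_0,t_0)]$.

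As written, this is a real error and not just redundancy, because the second requirement is false in general. The containment $P_{\rho_0}(x_0,t_0)\subset W$ gives only $t_0<0$ and $t_0-\rho_0^2>-\tfrac{1}{100m}h(x)^4$; it does not give $|t_0|\gtrsim\rho_0^2$. For a counterexample, take $\rho_0$ a small multiple of $h(x_0)^2$ (admissible when $a\le\tfrac{5}{8}$) and let $t_0\to0^-$. The right-hand side tends to $h(x_0)^aF_+(x_0,0)\le 2h(x_0)^{a+3}\exp(-h(x_0)^{-9/8})$, which is far smaller than $\rho_0^2$. No case split between $-2(m-1)t_0$ and $h(x_0)^3\phi$ can repair this. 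Once the second requirement is dropped, the hypothesis $\rho_0\le c\,h(x_0)^{(a+3)/2}$ is used only for the time-variation term inside your first bound, exactly as in the paper, and your proposal becomes the paper's proof.
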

\begin{proof}
Since $\min \{ \frac{11}{8}, \frac{3}{2} \} > \frac{4}{3}$, we have $\rho_0 \leq M h(x_0)^{\frac{4}{3}}$ so the bound~\eqref{eqn:small-enclosure-h(x0)} of Lemma~\ref{lemma:geometric-properties-of-h} applies.
Moreover, $\sum_{k=0}^3 |h|^{\frac{1}{4} - k} |D^k h| < \tau$.
Using the mean value theorem, we find
\begin{equation}\label{eqn:mvt-bound}
|h(x) - h(x_0)| \leq \sup_{B_{\rho_0}(x_0)} |\nabla h| \cdot |x - x_0| \leq C \tau \sup_{B_{\rho_0}(x_0)}|h|^{\frac{3}{4}} \rho_0 \leq C \tau h(x_0)^{\frac{3}{4}} \rho_0
\end{equation}
after bounding $h(x) \in [ \frac{1}{2} h(x_0), \frac{3}{2} h(x_0)]$ by~\eqref{eqn:small-enclosure-h(x0)}.
Differentiating the functions $\exp( - s^{- \frac{9}{8}})$ and $s^3(1-s) \exp( - s^{- \frac{9}{8}})$, we obtain the following bounds for $s \in (0,\tau_0]$ sufficiently small,
\begin{equation}\label{eqn:exp-bounds}
    \Bigl| \tfrac{d}{ds} \exp \bigl( - s^{- \frac{9}{8}} \bigr) \Bigr| < 2 s^{- \frac{17}{8}} \exp \bigl( - s^{- \frac{9}{8}} \bigr), \qquad \Bigl| \tfrac{d}{ds} \bigl[ s^3 (1-s) \exp \bigl( - s^{- \frac{9}{8}} \bigr) \bigr]  \Bigr| <  2 s^{\frac{7}{8}} \exp \bigl( - s^{- \frac{9}{8}} \bigr) \, .
\end{equation}
Writing $w_{\pm} = - 2(m-1)t + F_{\pm}$, we seek to prove that
    \begin{align*}
        - 2(m-1) t + F_- (x,t) &\geq (1 - C \, h(x_0)^a ) \bigl( - 2(m-1) t + F_+(x_0,t_0) \bigr), \\
        - 2(m-1) t + F_+(x,t) &\leq (1 + C \, h(x_0)^a ) \bigl( - 2(m-1) t + F_-(x_0,t_0) \bigr) \, .
    \end{align*}
We eliminate the term $- 2(m-1) C h(x_0)^a t \geq 0$ from each side and use $|h|_{L^{\infty}(U)} \leq \tau_0$ to bound
\[
F_+(x,t) - F_-(x,t) \leq 2 h(x)^4 \exp ( - h(x)^{- \frac{9}{8}}), \qquad F_-(x,t) > \tfrac{1}{2} h(x)^3 \exp ( - h(x)^{- \frac{9}{8}}),
\]
using $2 (1 - C h(x_0)^a) h(x_0)^4 < \frac{1}{4} C h(x_0)^{a+3}$ for $a \in (0,1)$.
By symmetry of the $F_{\pm}$, it suffices to prove
    \begin{equation}\label{eqn:suffice-to-prove-minus}
        F_-(x,t) - F_-(x_0,t_0) + \tfrac{1}{4} C h(x_0)^{a + 3} \exp \bigl( - h(x_0)^{- \frac{9}{8}} \bigr) > 0 \, .
    \end{equation}
Using the estimate~\eqref{eqn:mvt-bound}, we observe that
    \begin{align*}
    \bigl| (h(x)^3 - h(x)^4 ) - (h(x_0)^3 - h(x_0)^4) \bigr| &\leq 3 \sup_{P_{\rho_0}(x_0)} |h|^2 \cdot |h(x) - h(x_0)| \leq C h(x_0)^{\frac{11}{4}} \rho_0, \\
    \bigl| h(x)^{- \frac{9}{8}} - h(x_0)^{- \frac{9}{8}} \bigr| &\leq \tfrac{9}{8} \sup_{P_{\rho_0}(x_0)} |h|^{- \frac{17}{8}} |h(x) - h(x_0)| \leq C h(x_0)^{- \frac{11}{8}} \rho_0, 
    \end{align*}
    which is bounded by a constant for $\rho_0 \leq c h(x_0)^{\frac{11}{8}}$.
    Bounding $\frac{e^s-1}{s} \leq C$ for $s \in ( 0 , \tau)$, we find
    \allowdisplaybreaks{
    \begin{align*}
        &\left| \exp \bigl( - h(x)^{- \frac{9}{8}} \bigr) - \exp \bigl( - h(x_0)^{- \frac{9}{8}} \bigr) \right| \leq \exp \bigl( - h(x_0)^{- \frac{9}{8}} \bigr) \left( \exp \bigl( \bigl| h(x)^{- \frac{9}{8}} - h(x_0)^{- \frac{9}{8}} \bigr| \bigr) - 1 \right) \\
        & \qquad \qquad\qquad \qquad\qquad\qquad\qquad \quad \; \leq C \, \exp \bigl( - h(x_0)^{- \frac{9}{8}} \bigr) \cdot h(x_0)^{- \frac{11}{8}} \rho_0 \, , \\
        & \left| \bigl( h(x)^3 - h(x)^4  \bigr) \exp \bigl( - h(x)^{- \frac{9}{8}} \bigr) - \bigl( h(x_0)^3 - h(x_0)^4 \bigr) \exp \bigl( - h(x_0)^{- \frac{9}{8}} \bigr) \right| \\
        &\leq \left| \bigl( h(x)^3 - h(x)^4  \bigr) - \bigl( h(x_0)^3 - h(x_0)^4 \bigr) \right| \exp \bigl( - h(x_0)^{- \frac{9}{8}} \bigr) \\
        & \quad + \bigl( h(x)^3 - h(x)^4 \bigr) \Bigl| \exp \bigl( - h(x)^{- \frac{9}{8}} \bigr) - \exp \bigl( - h(x_0)^{- \frac{9}{8}} \bigr) \Bigr| \\
        &\leq \left( C h(x_0)^{\frac{11}{4}} \rho_0 + C h(x_0)^3 \cdot h(x_0)^{- \frac{11}{8}} \rho_0 \right) \exp \bigl( - h(x_0)^{- \frac{9}{8}} \bigr) \leq C h(x_0)^{\frac{13}{8}} \rho_0 \exp \bigl( - h(x_0)^{- \frac{9}{8}} \bigr) \, .
    \end{align*}}
    Here, we bounded $h(x) \in [ \frac{1}{2} h(x_0), \frac{3}{2} h(x_0)]$ by~\eqref{eqn:small-enclosure-h(x0)}.    
    Combining these estimates, we arrive at
    \begin{align*}
        F_-(x,t) - F_-(x_0,t_0) &= \bigl[ h(x)^3 - (h(x)^4 + t) \bigr] \exp \bigl( - h(x)^{- \frac{9}{8}} \bigr) - \bigl[ h(x_0)^3 - ( h(x_0)^4 + t_0) \bigr] \bigl( - h(x_0)^{- \frac{9}{8}} \bigr) \\
        &\geq - \left| \bigl( h(x)^3 - h(x)^4  \bigr) \exp \bigl( - h(x)^{- \frac{9}{8}} \bigr) - \bigl( h(x_0)^3 - h(x_0)^4 \bigr) \exp \bigl( - h(x_0)^{- \frac{9}{8}} \bigr) \right| \\
        & \quad - |t - t_0| \exp \bigl( - h(x_0)^{- \frac{9}{8}} \bigr) - |t_0| \cdot \left| \exp \bigl( - h(x)^{- \frac{9}{8}} \bigr) - \exp \bigl( - h(x_0)^{- \frac{9}{8}} \bigr) \right| \\
        &\geq - (C h(x_0)^{\frac{13}{8}} + \rho_0 ) \rho_0  \exp \bigl( - h(x_0)^{- \frac{9}{8}} \bigr) \, ,
    \end{align*}
    where in the last step we used $|t| < h(x)^4$ and $h(x_0)^4 h(x_0)^{- \frac{11}{8}} = h(x_0)^{\frac{21}{8}} \ll h(x_0)^{\frac{13}{8}}$.
    Thus, inequality~\eqref{eqn:suffice-to-prove-minus} will follow from $\tfrac{1}{4} C h(x_0)^{a+3} - \bigl( C h(x_0)^{\frac{13}{8}} + \rho_0 \bigr) \rho_0 > 0$.
    This property holds whenever $\rho_0 \leq c \min \{ h(x_0)^{a + \frac{11}{8}} , h(x_0)^{\frac{a+3}{2}} \}$, and in particular for $a = \frac{1}{10}$ and $\rho_0 < h(x_0)^{\frac{9}{5}}$ as desired.
\end{proof}
We now derive higher derivative estimates for solutions of equation~\eqref{eqn:cmcf-for-u2} using the parabolic small perturbation theorem.
In what follows, we study an operator $\cM_{\zeta}$ as in~\eqref{eqn:cmcf-for-u2} with $\zeta(x,t) := \eta \bigl( \frac{t}{h(x)^4} \bigr)$, for $\eta : \bR \to [0,1]$ a smooth cutoff function such that $\eta \equiv 0$ for $|\sigma|>1$ and $\Bigl| \frac{d^k}{d \sigma^k} \eta(\sigma) \Bigr| \leq C_k$.
For a solution $w$ of $\cM_{\zeta} w = 0$ in the parabolic cylinder $P_{\rho_0}(x_0,t_0) :=B_{\rho_0}(x_0) \times [t_0 - \rho_0^2, t_0] \subset \{ - h(x)^4 < t < 0 \}$, we consider $(z,s) \in B_1(0) \times [-1,0]$ and let $\tilde{\zeta}(z,s) := \zeta(x_0 + \rho_0 z, t_0 + \rho_0^2 s)$ in $P_1$.
Let $u(z,s) := \rho_0^{-2} w(x_0 + \rho_0 z , t_0 + \rho_0^2 s)$ be the function in the rescaled variables, so that
\begin{equation}\label{eqn:w-u-rescaling}
\partial_t w = \partial_s u, \qquad \Delta_x w = \Delta_z u, \qquad |\nabla_x w|^2 = \rho_0^2 |\nabla_z u|^2 , \qquad Q(w) = \rho_0^2 Q_z(u).
\end{equation}
Therefore, $u$ satisfies $F_{\tilde{\zeta}}( D^2 u, \nabla u, u,z,s) - u_s = 0$ in $P_1$ in the notation of~\cite{small-perturbation}, where
\begin{equation}\label{eqn:F-tilde-operator}
F_{\tilde{\zeta}} (M,p,\xi,z,s) := \on{tr} M - 2(m-1) - \tilde{\zeta}(z,s) \frac{\la Mp, p \rg + 2 |p|^2}{4\xi + |p|^2} \, , \quad \xi = \xi(z,s).
\end{equation}
Being able to invoke the small perturbation theorem~\cite{small-perturbation}*{Theorem 1.1 and Corollary 1.2} amounts to the validity of the five conditions $\mathbf{H}_{\varphi}1 ) - \mathbf{H}_{\varphi} 5)$ presented therein with regards to the pair $( \tilde{F}_{\rho_0}, \varphi_1)$ in the parabolic cylinder $P_1$, for $\varphi_1(s) = 1 - 2(m-1) s$ the cylinder and $\cU_{\delta}(\varphi_1)$ the admissible class
\[
\cU_{\delta}(\varphi_1) := \bigl\{ (M + D^2 \varphi_1, p + D \varphi_1, \xi + \varphi_1, z, s) : \| M\|, |p|, |\xi| < \delta, \; (z,s) \in P_1 \bigr\} .
\]
\begin{proposition}\label{prop:apply-small-perturbation}
    Fix $\delta \in (0,\frac{1}{2})$ and $\alpha \in (0,1)$.
    Let $P_r(x_0,t_0) := B_r(x_0) \times [t_0 - r^2, t_0]$ denote a parabolic cylinder centered at $(x_0,t_0)$ and contained in $\{ (x,t) : - h(x)^4 < t < 0 \}$.
    There exists a constant $\mu_*$ depending only on $(n,m,\alpha,\delta)$ such that the following holds.
    Let $\tau \in (0,\tau_0(m,n,\alpha,\delta)]$ be sufficiently small in Definition~\ref{def:k-cutoff-h}
    and suppose that $w$ satisfies
    \[
    \cM_{\zeta}(w) = 0 \quad \text{in } \; P_{\rho_0}(x_0,t_0), \qquad \sup_{(x,t) \in P_{\rho_0}(x_0,t_0)} |w(x,t) - \varphi_{\rho_0^2}(t-t_0)| \leq \mu_* \rho_0^2, 
    \]
    where $\rho_0 \leq C(m,n) h(x_0)^2$ and $\varphi_{\rho_0^2}(t) = \rho_0^2 - 2(m-1)t$.
    Then, $w \in C^{2,\alpha}(P_{\rho_0/2}(x_0,t_0))$ and
    \begin{align*}
       \sup_{P_{\rho_0/2}(x_0,t_0)} \left( \rho_0^{-2} | w - \varphi_{\rho_0^2}(t-t_0) | + \rho_0^{-1} |\nabla_x w| + |D^2_x w| + |\partial_t w + 2(m-1)| \right) &\leq \delta. 
    \end{align*}
\end{proposition}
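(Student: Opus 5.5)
We rescale to the unit parabolic cylinder and apply the parabolic small perturbation theorem of~\cite{small-perturbation}*{Theorem 1.1 and Corollary 1.2} to the pair $(F_{\tilde\zeta},\varphi_1)$.

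\textbf{Rescaling.} Set $u(z,s) := \rho_0^{-2} w(x_0+\rho_0 z, t_0+\rho_0^2 s)$ on $P_1$. By~\eqref{eqn:w-u-rescaling}, $u$ solves $F_{\tilde\zeta}(D^2u,\nabla u,u,z,s) - u_s = 0$ with $F_{\tilde\zeta}$ as in~\eqref{eqn:F-tilde-operator}. The hypothesis becomes $\|u-\varphi_1\|_{L^\infty(P_1)}\le \mu_*$. The conclusion also transforms correctly: $\rho_0^{-2}|w-\varphi_{\rho_0^2}|$, $\rho_0^{-1}|\nabla_x w|$, $|D^2_x w|$ and $|\partial_t w+2(m-1)|$ are exactly $|u-\varphi_1|$, $|\nabla u|$, $|D^2u|$ and $|\partial_s u - \partial_s\varphi_1|$. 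So it suffices to obtain a $C^{2,\alpha}$ bound on $u-\varphi_1$ in $P_{1/2}$ that is small in terms of $\mu_*$.

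\textbf{Verifying the hypotheses on $\cU_\delta(\varphi_1)$.} On this class, $\xi\in(1/2,\,3+4(m-1))$ is bounded away from $0$, and $|p|,\|M\|<\delta$.
\begin{itemize}
\item[$\mathbf{H}1$] $\varphi_1$ solves the equation: $F_{\tilde\zeta}(0,0,\varphi_1,z,s)=-2(m-1)=\partial_s\varphi_1$, since the nonlinear term vanishes when $p=0$.
\item[$\mathbf{H}2$] Uniform ellipticity: $D_MF = \delta_{ij} - \tilde\zeta\, p_ip_j/(4\xi+|p|^2)$ has eigenvalues in $[1-\delta^2,1]$.
\item[$\mathbf{H}3$–$\mathbf{H}5$] $F_{\tilde\zeta}$ must be $C^2$ (or $C^{1,1}$) in $(M,p,\xi)$ with bounded norms, and Hölder continuous in $(z,s)$ with a small seminorm. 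Smoothness in $(M,p,\xi)$ is clear, since the denominator satisfies $4\xi\ge2$.
\end{itemize}

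The $(z,s)$-dependence enters only through $\tilde\zeta(z,s)=\eta\bigl((t_0+\rho_0^2 s)/h(x_0+\rho_0 z)^4\bigr)$, and the term it multiplies is $O(|p|^2)$.

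\textbf{Main obstacle: controlling $\tilde\zeta$.} We need $\|\tilde\zeta\|_{C^{\alpha}(P_1)}$, or even $C^1$, to be bounded independently of $x_0,t_0$. Compute
\[
\partial_s\tilde\zeta = \eta'\,\rho_0^2 h^{-4}, \qquad \nabla_z\tilde\zeta = -4\eta'\, t\,\rho_0\, h^{-5}\nabla h .
\]
Using $\rho_0\le C h(x_0)^2$, $|t|\le h^4$, $|\nabla h|\le\tau h^{3/4}$, and the comparability $h(x)\sim h(x_0)$ on $B_{\rho_0}(x_0)$ from~\eqref{eqn:small-enclosure-h(x0)}, these are bounded by $C$ and $C\tau h^{7/4}$ respectively. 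Higher derivatives are handled the same way via~\eqref{eqn:h-properties}. Hence $F_{\tilde\zeta}$ is uniformly $C^{1}$ in $(z,s)$.

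Where the theorem requires smallness of the oscillation in $(z,s)$, this comes for free: $\partial_{(z,s)}F_{\tilde\zeta}$ carries the factor $|p|^2\le\delta^2$, so it is as small as needed once $\delta$ (in the admissible class) is small. This is where $\tau_0$ and the constant $C(m,n)$ enter.

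\textbf{Conclusion.} With the hypotheses verified and constants depending only on $(n,m,\alpha,\delta)$, the small perturbation theorem yields $\|u-\varphi_1\|_{C^{2,\alpha}(P_{1/2})}\le C\mu_*$ (or $\le\delta$ directly). Choosing $\mu_*$ with $C\mu_*\le\delta$ and scaling back gives the stated estimate and $w\in C^{2,\alpha}(P_{\rho_0/2}(x_0,t_0))$.
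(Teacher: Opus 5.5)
Your proposal is correct and follows essentially the same route as the paper: rescale to $P_1$, verify the structural hypotheses of the parabolic small perturbation theorem for $(F_{\tilde\zeta},\varphi_1)$ on $\cU_\delta(\varphi_1)$, and control the $(z,s)$-dependence through $\partial_s\tilde\zeta=\eta'\rho_0^2h^{-4}$ and $\nabla_z\tilde\zeta$, using $\rho_0\le C(m,n)h(x_0)^2$ and the comparability of $h$ on $B_{\rho_0}(x_0)$. The paper also notes two points you hedged on: $\nabla_M F$ is independent of $M$, so the modulus in $\mathbf{H}_{\varphi}5)$ is $\omega\equiv 0$, and $\mathbf{H}_{\varphi}4)$ only requires a bound $\|\nabla F\|\le C(m,n)\delta$ on $\cU_\delta(\varphi_1)$, not smallness of the $(z,s)$-oscillation.
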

\begin{proof}
We work in the rescaled framework considered above, letting $u(z,s) := \rho_0^{-2} w(x_0 + \rho_0 z , t_0 + \rho_0^2 s)$ so that $u$ is defined in the parabolic cylinder $P_1$ with the scaling properties~\eqref{eqn:w-u-rescaling} and satisfies equation~\eqref{eqn:F-tilde-operator}.
For $\varphi_1(s)= 1 - 2(m-1) s$ the cylinder solution, our assumptions amount to
\[
    F_{\tilde{\zeta}}(D^2u, \nabla u,u,z,s) - u_s = 0 \quad \text{in } \; P_1, \qquad \| u - \varphi_1 \|_{L^{\infty}(P_1)} \leq \mu_*, 
\]
In view of the scaling $D_x^k\partial^{\ell}_t w = \rho_0^{2-k-2\ell} D^k_z \partial^{\ell}_s u$, the derivative estimates for $w$ will follow upon proving that $u \in C^{2,\alpha}(P_{1/2})$ satisfies $\| u - \varphi_1 \|_{C^2(P_{1/2})} \leq \delta$.

We now prove that the conditions $\mathbf{H}_{\varphi}1 ) - \mathbf{H}_{\varphi} 5)$ of the small perturbation Theorem~\cite{small-perturbation} are satisfied in our situation.
    For elements of the class $\cU_{\delta}(\varphi_1)$ and $t \leq 0$, the function $\xi + \varphi_1(s)$ satisfies $1 - \delta \leq \xi + \varphi_1(s) \leq 2m$, so the denominator $4 (\xi + \varphi_1) + |p|^2$ of~\eqref{eqn:F-tilde-operator} is uniformly positive and bounded for $\delta < \frac{1}{2}$.
    For the properties $\mathbf{H}_{\varphi}1 )$ and $\mathbf{H}_{\varphi}2 )$, we use $0 \leq \tilde{\zeta} \leq 1$ to compute, for any $N \geq 0$,
    \begin{align*}
        \left| F(M+N, p , \xi, z,s) - F(M,p,\xi,z,s) - \on{tr} N \right| &= \left| \tilde{\zeta} \tfrac{\la Np, p \rg}{4 \xi + |p|^2} \right| \leq 
        \on{tr} N \tfrac{|p|^2}{4 \xi + |p|^2} \leq \on{tr} N \tfrac{\delta^2}{ 2+ \delta^2}
    \end{align*}
    since $|p| < \delta$ and $\xi > \frac{1}{2}$ in $\cU_{\delta}(\varphi_1)$.
    Bounding $\| N \| \leq \on{tr} N \leq n \| N \|$ for $N \geq 0$, we conclude from this that $F(\cdot,p,\xi,z,s)$ is uniformly elliptic in $\cU_{\delta}(\varphi_1)$.
    Moreover,~\eqref{eqn:F-tilde-operator} implies $\nabla_M F = I - \tilde{\zeta}(z,s) \frac{p \otimes p}{4 \xi + |p|^2}$ independently of $M$, hence $\nabla_M F(A,p,\xi,z,s) - \nabla_M F(B,p,\xi,z,s) = 0$ and we can take the modulus $\omega \equiv 0$ in $\mathbf{H}_{\varphi}5)$.
    The property $\mathbf{H}_{\varphi}3)$ follows from $\cM_{\zeta} \varphi_1 = 0$; to see $\mathbf{H}_{\varphi}4)$, we use $0 \leq \tilde{\zeta} \leq 1$ to bound $\| \nabla_M F \| \leq 1 + \frac{|p|^2}{4 \xi + |p|^2} \leq 1 + \frac{\delta^2}{2}$.
    For $\| M \|, |p| \leq \delta$, we have $|\la Mp, p \rg+ 2|p|^2| \leq (\delta+2) \delta^2$, so
    \begin{align*}
        \partial_{\xi} F &= 4 \tilde{\zeta}(z,s) \frac{\la Mp, p \rg + 2 |p|^2}{(4 \xi + |p|^2)^2}, \qquad \nabla_p F = - \tilde{\zeta}(z,s) \frac{(2 Mp + 4p) (4 \xi + |p|^2) - 2 ( \la Mp, p \rg + 2 |p|^2 )p}{(4 \xi + |p|^2)^2}, \\
        & \implies |\partial_{\xi} F| \leq \tfrac{4 (\delta+2) \delta^2}{16 (1-\delta)^2} < 3 \delta^2, \qquad |\nabla_p F| \leq \tfrac{2(\delta+2) \delta}{4(1-\delta)} + \tfrac{(\delta+2) \delta^3}{8(1-\delta)^2} \leq 6\delta .
    \end{align*}
    Finally, the only dependence of $F$ on the variables $(z,s)$ is through the function $\tilde{\zeta}$, hence $D^k_z \partial^{\ell}_s F = - (D^k_z \partial^{\ell}_s \tilde{\zeta}) \frac{\la Mp, p \rg + 2 |p|^2}{4 \xi + |p|^2}$ where $\left| \frac{\la Mp, p \rg + 2 |p|^2}{4 \xi + |p|^2} \right| \leq 3 \delta^2$ in $\cU_{\delta}(\varphi_1)$.
    For brevity, let $\mu(z,s) := \frac{t_0 + \rho_0^2 s}{h(x_0 + \rho_0 z)^4}$, so that $\tilde{\zeta}(z,s) := \eta(\mu(z,s))$ and $|\mu| \leq 1$.
    We have $\partial_s^{\ell} \mu = \rho_0^2 h^{-4}$ for $\ell=1$ and zero otherwise; moreover, since $h = \tilde{h}^4$ in Definition~\ref{def:k-cutoff-h}, we compute that
    \[
    D_z \mu = \rho_0 \mu \frac{\nabla (h^{-4})}{h^{-4}}(x_0 + \rho_0 z), \quad \frac{D^k \mu}{\rho_0^k \mu} = \sum_{j=1}^k \sum_{|\alpha|=k} c_{\alpha} \frac{D^{\alpha}h}{h^j} (x_0 + \rho_0z), \quad \sum_{k=0}^3 |h|^{\frac{k}{4}}\frac{|D^k_z \mu|}{\rho_0^k \mu} \leq 100 \tau,
    \]
    and $|D^k_z \mu| \leq C_k (\rho_0 |h|^{-\frac{1}{4}})^k$, where $D^{\alpha}h = D^{\alpha_1} h \otimes \cdots \otimes D^{\alpha_j} h$ for any $j$-entry multi-index $\alpha = (\alpha_1, \dots, \alpha_j)$.
    Combining this fact with $|\frac{d^k}{d \sigma^k} \eta(\sigma)| \leq C_k$ and $|\mu| \leq 1$ as well as the iterated chain rule, we can replace $\tau$ by $C(m,n) \tau$ for a sufficiently small constant $C(m,n)$ to obtain
    \begin{equation}\label{eqn:bounds-on-zeta}
        \begin{split}
        |D^k_z \partial_s^{\ell} \tilde{\zeta}| \leq C_{k,\ell} \bigl(\rho_0 \, |h|^{- \frac{1}{4}} \bigr)^k (\rho_0^2 \, |h|^{-4})^{\ell} , \qquad \sum_{k=0}^3 (\rho_0 |h|^{- \frac{1}{4}})^{-k} \left( |D^k_z \tilde{\zeta}| + (\rho_0^2 |h|^{-4})^{-\ell} |D^k_z \partial_s^{\ell} \tilde{\zeta}| \right) \leq \tau.
        \end{split}
    \end{equation}
    Consequently, $|\nabla_{(z,s)} F| \leq 4 \delta^2 (1 + \rho_0^2 \inf_{z \in B_1} h(x_0 + \rho_0 z)^{-4})$.
    The assumption $\rho_0 \leq C(m,n) h(x_0)^2$ implies that $\rho_0 \leq \tilde{C}(m,n) \inf_{B_{\rho_0}(x_0)} h(x)^2$ by the bound~\eqref{eqn:r0-inequality} of Lemma~\ref{lemma:geometric-properties-of-h}, whereby
    \[
    \| \nabla F \|_{L^{\infty}(\cU_{\delta}(\varphi_1))} \leq 10 \delta \bigl[ 1 + \rho_0^2 \inf_{z \in B_1} h(x_0 + \rho_0 z)^{-4} \bigr] \leq C(m,n) \delta.
    \]
    This establishes $\mathbf{H}_{\varphi}4)$, so the small perturbation Theorem of~\cite{small-perturbation} applies to produce a $\mu_*(m,n,\alpha,\delta)$ such that $\| u - \varphi_1 \|_{L^{\infty}(P_1)} \leq \mu_*$ implies $\| u - \varphi_1 \|_{C^2(P_{1/2})} \leq \delta$.
    This proves the claim.
\end{proof}

\subsection{Construction of a solution}\label{section:construction-of-a-solution}

We proceed with the proof of Theorem~\ref{thm:solution-of-smcf-with-estimates}.
We will approximate the domain $\{ (x,t): - h(x)^4 < t < 0 \}$ by a sequence of cylindrical domains, which we call a \textit{staircase}.
Since $x \mapsto h(x)^4$ is a smooth map, Sard's theorem ensures that its set of regular values has full measure, so there exists a $M \in ( \sup_U h(x)^4, 2 \sup_U h(x)^4)$ such that the countable set $\{ t_j \}_{j \in \bN_0} = \{ 2^{-(j+1)} M \}_{j \in \bN_0}$ consists of regular values of the function $h^4$.
Let $\hat{h}_d$ be the discretization of the function $h^4$, taking values in the set $\{ 2^{-(j+1)} M \}_{j \in \bN_0}$ and defined by
\begin{equation}\label{eqn:discretization}
    \hat{h}_d(x) = 2^{-(j+1)} M \qquad \text{where } \; 2^{-(j+1)} M \leq h(x)^4 < 2^{-j} M.
\end{equation}
We also define sub-domains of $U$ by $U_j := \{ x \in U : 2^{-(j+1)} M < h(x)^4 \}$, so that $\partial U_j = \{ x \in U : h(x)^4 = t_j \}$ is a smooth submanifold for every $j$ by construction, where $t_j = 2^{-(j+1)}M$.
The definition of $\hat{h}_d$ implies $ \tfrac{1}{2} h(x)^4 \leq \hat{h}_d(x) \leq h(x)^4$ for all $x \in U$.
We consider the discretized domain
\[
W_d := \{ (x,t) : x \in U, \; - \hat{h}_d(x) < t < 0 \} = \bigcup_{j=0}^{\infty} \bigl( U_j \times (-t_j, 0) \bigr), \qquad \bar{W}_d = \bigcup_{j=0}^{\infty} \bar{Q}_j,
\]
for $Q_j := U_j \times ( - t_j, - t_{j+1})$ the steps of the staircase.
Since $\hat{h}_d(x) < h(x)^4$, $W_d$ is an infinite union of parabolic cylinders with $W_d \subset \{ - h(x)^4 < t < 0 \}$.
The steps $Q_j$ have the following description.
\begin{lemma}\label{lemma:Qj-geometry}
Let $\tilde{W} := \{ (x,t) : - \frac{1}{5} h(x)^4 < t < 0 \}$.
We can take $\tau \in (0,\tau_0(m,n)]$ small so that
    \begin{equation}\label{eqn:qj-intersection}
\tilde{W} \cap Q_{j+1} \subset ( U_j \setminus \tilde{U}_j) \times [ - t_{j+1} , - t_{j+2} ], \qquad \tilde{U}_j := \{ x \in U_j : \textup{dist}(x, \partial U_j) < \tau^{-1} t_j^{\frac{1}{16}} \}
\end{equation}
for every $j$.
In particular, we have $\textup{dist} ( \tilde{W} \cap Q_{j+1} , \partial U_j \times [ -t_{j+1}, - t_{j+2}]) > \tau^{-1} t_j^{\frac{1}{16}}$.
\end{lemma}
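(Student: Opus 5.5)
Fix $j$ and a point $(x,t) \in \tilde{W} \cap Q_{j+1}$. The plan has two steps. First we record what membership in each set says about $h(x)^4$. Then we turn the gap between $h(x)^4$ and the level $t_j$ into a lower bound on the distance from $x$ to $\partial U_j$, using the $\tau$-Lipschitz property of $h^{\frac14}$ from Definition~\ref{def:k-cutoff-h}.

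For the first step, membership in $Q_{j+1} = U_{j+1} \times (-t_{j+1}, -t_{j+2})$ gives $x \in U_{j+1} \subset U_j$ and $t \in (-t_{j+1}, -t_{j+2})$. The time component therefore lies in the claimed closed interval. Membership in $\tilde{W}$ gives $-\frac15 h(x)^4 < t$, so
\[
h(x)^4 > 5|t| > 5 t_{j+2} = \tfrac54 t_j ,
\]
using $t_{j+2} = t_j/4$. In words: on the relevant part of the step, $h^4$ exceeds the level $t_j$ defining $\partial U_j$ by a definite factor.

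For the second step, take any $y \in \partial U_j$, so that $h(y)^4 = t_j$. Since $h^{\frac14}$ is $\tau$-Lipschitz,
\[
\tau \, |x-y| \geq h(x)^{\frac14} - h(y)^{\frac14} \geq \bigl( (5/4)^{\frac1{16}} - 1 \bigr) t_j^{\frac1{16}} =: c_0 \, t_j^{\frac1{16}},
\]
with $c_0 > 0$ a universal constant. Hence $\on{dist}(x, \partial U_j) \geq c_0 \tau^{-1} t_j^{\frac1{16}}$.

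This is the main obstacle: the constant $c_0$ is smaller than $1$, while the statement asks for $\tau^{-1} t_j^{\frac1{16}}$ with constant exactly $1$. I see two ways to absorb it.
\begin{itemize}
\item The cleanest is to observe that the Lipschitz constant of $h^{\frac14}$ can itself be made an arbitrarily small multiple of $\tau$. In Definition~\ref{def:k-cutoff-h}, $h = \tilde h^4$ with $|\nabla \tilde h| < \tau/10^3$, so $h^{\frac14} = \tilde h$ is in fact $\tau/10^3$-Lipschitz. Since $c_0 = (5/4)^{\frac1{16}} - 1 \approx 0.014 > 10^{-3}$, this yields $\on{dist}(x,\partial U_j) > \tau^{-1} t_j^{\frac1{16}}$ directly.
\item Alternatively, one replaces $\tau$ by $C\tau$ in the definition of $h$, as the paper does elsewhere, which is allowed by shrinking $\tau_0(m,n)$.
\end{itemize}

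If $\partial U_j$ is empty, the distance is infinite and there is nothing to prove. Finally, the "in particular" statement follows at once: the distance between $\tilde W \cap Q_{j+1}$ and $\partial U_j \times [-t_{j+1}, -t_{j+2}]$ is at least the spatial distance from $x$ to $\partial U_j$.
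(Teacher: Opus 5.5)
Your argument is essentially the paper's. You bound $h(x)^4$ from below using $t>-\frac15 h(x)^4$ and the time range of $Q_{j+1}$, then convert the gap above the level $t_j$ into a distance from $\partial U_j$ via the Lipschitz bound on $h^{1/4}=\tilde h$. The small constant is absorbed through $|\nabla\tilde h|<\tau/10^3$, which is the paper's ``replacing $\tau$ by $10^{-3}\tau$''.

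Your time bound $|t|>t_{j+2}$, giving the ratio $\frac54$, is in fact the correct one. The paper writes $|t|\ge t_{j+1}$, which is backwards on $Q_{j+1}$, and gets $\frac52$. Since $(5/4)^{1/16}-1\approx 0.014>10^{-3}$, your ratio still suffices.

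One slip to fix: because $t_{j+1}<t_j$, the inclusion is $U_j\subset U_{j+1}$, not $U_{j+1}\subset U_j$. So $x\in U_j$ has to come from your bound $h(x)^4>\frac54 t_j>t_j$, not from membership in $U_{j+1}$.
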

\begin{proof}
Consider any $(x,t) \in Q_{j+1} \cap \{ t > - \frac{1}{5} h(x)^4 \}$ and some $x_0 \in \partial U_j$ with $|x - x_0| = \rho_0$.
    Then, $\frac{1}{5} h(x)^4 > |t| \geq t_{j+1} = \frac{1}{2} t_j = \frac{1}{2} h(x_0)^4$, and since $h^{\frac{1}{4}}$ is $\tau$-Lipschitz, we find $x \in U_j$ and
    \[
    h(x)^{\frac{1}{4}} > \bigl(\tfrac{5}{2} \bigr)^{\frac{1}{16}} h(x_0)^{\frac{1}{4}} \implies \Bigl[ \bigl(\tfrac{5}{2} \bigr)^{\frac{1}{16}} - 1 \Bigr] h(x_0)^{\frac{1}{4}} < |h(x)^{\frac{1}{4}} - h(x_0)^{\frac{1}{4}}| < \tau |x - x_0| = \tau \rho_0.
    \]
    Replacing $\tau$ by $10^{-3} \tau$ in the last step, we obtain $\rho_0 = |x - x_0| > \tau^{-1} t_j^{\frac{1}{16}}$ for any such $x_0$, hence $\text{dist}(x, \partial U_j) > \tau^{-1} t_j^{\frac{1}{16}}$.
    Consequently, $x \in U_j \setminus \tilde{U}_j$ and the claimed containment~\eqref{eqn:qj-intersection} is proved.
\end{proof}
We now utilize the above ingredients to prove the existence of the desired solution $w_{\tau}$.
Let $\eta : (-\infty,0] \to [0,1]$ be a smooth cutoff function satisfying $\bigl| \frac{d^k}{d \sigma^k} \eta(\sigma) \bigr| \leq C_k$ and
\[
\eta \equiv 0 \quad \text{on } \; \bigl( - \infty, - \tfrac{1}{60m} \bigr], \qquad \eta \equiv 1 \quad \text{on } \; \bigl[ -\tfrac{1}{80 m}, 0], \qquad 0 \leq |\eta'| \leq 10^3 m.
\]
We define the function $\zeta(x,t) := \eta \bigl( \frac{t}{h(x)^4} \bigr)$ and consider the operator $\cM_{\zeta}$ as in~\eqref{eqn:cmcf-for-u2}, which by construction agrees with the mean curvature flow operator $\cM_1$ on $\{ - \frac{1}{100 m} h(x)^4 < t < 0 \}$, and becomes the linear operator $\cM_0 w = \partial_t w - \Delta w + 2(m-1)$ on $\{ t < - \frac{1}{50 m} h(x)^4 \}$.
Let $w_{\pm}$ denote the barrier functions constructed in Corollary~\ref{cor:general-barrier-functions}, which satisfy $\cM_{\zeta}(w_-) < 0 < \cM_{\zeta}(w_+)$.
On each $Q_j= U_j \times (- t_j, - t_{j+1})$, we will construct a solution of $\cM_{\zeta}w_j =0$ using an inductive procedure.
While these solutions do not glue together to a global solution of $\cM_{\zeta}$ on all of $W_d$, we will prove that they assemble into a smooth solution $w_{\tau}$ on the sub-domain $\tilde{W} := \{ (x,t) : - \frac{1}{5} h(x)^4 < t < 0 \}$ of $W_d$ with $W \subset \tilde{W}$.
The construction of the solution $w_{\tau}$ on $\tilde{W}$ utilizes the following inductive procedure:
\begin{enumerate}[(I)]
    \item On the domain $Q_j$, we solve the initial-boundary value problem for $\cM_{\zeta}$ with data $\phi_j$,
    \begin{align}
        \cM_{\zeta} w_j &= 0 \quad \text{in } \; Q_j, \qquad w_j = \phi_j \quad \text{on } \; \partial_p Q_j,  \qquad \text{where}\label{eqn:w=phij-ibvp} \\
        &w_- \leq \phi_j \leq w_+ \quad \text{on } \; \partial_p Q_j = ( \partial U_j \times [-t_j, - t_{j+1}]) \cup U_j \times \{ -t_j \}. \label{eqn:phi-j-data}
    \end{align}
    \item Using the terminal datum $w_j$ on $U_j \times \{ - t_{j+1} \}$ and the function $w_+$, we interpolate smoothly to produce new initial data $\phi_{j+1}$ on the time-slice $U_{j+1} \times \{ -  t_{j+1} \}$.
    \item We solve the initial-boundary value problem on $Q_{j+1}$ and iterate the process inductively in $j$.
\end{enumerate}
This construction will involve sub-regions of the form $\{ (x,t) : - c_i h(x)^4 < t < 0\}$ for constants $c_1 = \frac{1}{100m} < c_L < \hat{c}_L < c_{\tilde{W}} = \frac{1}{5} < c_p < c_+$ that play the following roles:
\begin{enumerate}[(i)]
    \item In the region $W = \{ - c_1 h(x)^4 < t < 0 \}$, we have the mean curvature flow operator $\cM_{\zeta} = \cM_1$.
    \item Outside the region $\{ - c_L h(x)^4 < t <  0 \}$, we have the linear operator $\cM_{\zeta} = \cM_0$.
    \item The region $\{ - \hat{c}_L h(x)^4 < t < 0 \}$ satisfies Lemma~\ref{lemma:geometric-properties-of-h} for $4 m \hat{c}_L < c_{\tilde{W}}$, namely 
    \[
    (x_0,t_0) \in \{ - \hat{c}_L h(x)^4 < t < 0 \}, \; \rho_0 \leq \sqrt{2 m \hat{c}_L} h(x_0)^2 \implies P_{\rho_0}(x_0,t_0) \subset \{ - c_{\tilde{W}} h(x)^4 < t < 0 \} = \tilde{W}.
    \]
    \item The constants $c_p < c_+ < \frac{1}{2}$ will be used to produce the new initial datum $\phi_{j+1}$ on $U_{j+1} \times \{ - t_{j+1} \}$ by smoothly gluing the terminal datum $w_j$ to the function $w_+$ in a tubular neighborhood of $\partial U_j$ along the time-slice $U_j \times \{ - t_{j+1} \}$.
    Moreover, $c_p > c_{\tilde{W}}$ will preserve the smoothness in $\tilde{W}$.
\end{enumerate}
We observe that an admissible choice of constants satisfying the above requirements is
\[
c_1 = \tfrac{1}{100 m}, \quad c_L = \tfrac{1}{50 m}, \quad \hat{c}_L = \tfrac{1}{30 m}, \quad c_{\tilde{W}} = \tfrac{1}{5}, \quad c_p = \tfrac{2}{5}, \quad c_+ = \tfrac{9}{20},
\]
and will use those values in our construction, keeping in mind the properties (i) -- (iv).

We now implement these steps to construct the desired solution $w_{\tau}$ in $\{ - \frac{1}{5} h(x)^4 < t < 0 \}$.
\begin{proof}[Proof of Theorem~\ref{thm:solution-of-smcf-with-estimates}]
We follow the steps (I)--(III) of the above inductive scheme and show that this procedure applies for each $j$, producing a smooth solution $w_{\tau}$ of $\cM_{\zeta}$ on $\tilde{W}$ via $w_{\tau} = w_j$ on $\tilde{W} \cap Q_j$.

\smallskip \noindent \textbf{Induction statement.}
We will prove the following statement by strong induction: for every $J$, we can find a smooth solution $w_J$ of the problem~\eqref{eqn:w=phij-ibvp} on the domain $Q_J$ such that defining
\[
w_{\tau} = w_j \quad \text{on } \; \tilde{W} \cap \bar{Q}_j, \qquad 0 \leq j \leq J, \qquad \tilde{W} := \{ (x,t) : x \in U, \; - \tfrac{1}{5} h(x)^4 < t < 0 \}
\]
produces a smooth solution of $\cM_{\zeta}$ on the domain $\tilde{W} \cap \bigcup_{j=0}^J Q_j$, which satisfies $w_- \leq w_{\tau} \leq w_+$ and
\begin{equation}\label{eqn:w-tau-final-condition}
\begin{split}
    w_{\tau}^{-1}|\nabla_x w_{\tau}|^2 + |D^2_x w_{\tau}| &\leq \, \delta \qquad \text{and} \qquad \partial_t w_{\tau} < - 1 \qquad \text{in } \; \bigl\{ - \tfrac{1}{30 m} h(x)^4 < t < 0 \}.
\end{split}
\end{equation}
For any $(x_0,t_0) \in W_d$ with $t_0 < - \frac{1}{50 m} h(x_0)^4$, we can find $\rho_0 > 0$ such that $t < - \frac{1}{50 m} h(x)^4$ holds for all $(x,t) \in P_{\rho_0}(x_0,t_0)$, hence $\cM_{\zeta} = \cM_0$ is a linear operator and $w$ is the solution of the linear equation $\cM_0 w = 0$ in $P_{\rho_0}(x_0,t_0)$.
Thus, $w$ cannot have gradient blowup in this parabolic neighborhood, and covering $\{ t < - \frac{1}{50 m} h(x)^4 \}$ by such parabolic cylinders proves that $|\nabla w|$ is uniformly bounded in terms of the data $\phi$ and the geometry of the region.
In particular, if $t < - \frac{1}{50 m} h(x)^4$ holds for all $(x,t) \in Q_j$, then the problem~\eqref{eqn:w=phij-ibvp} has a solution $w \in C^{\infty}(Q_j) \cap C^0(\bar{Q}_j)$.

For the base case of the induction, observe that $t \leq - t_1 = - \frac{M}{4} < - \frac{1}{10m} \sup_U h(x)^4$ on $Q_0$, hence $\cM_{\zeta} = \cM_0$ and our observation implies the existence of a solution $w_0 \in C^{\infty}(Q_0) \cap C^0(\bar{Q}_0)$ to~\eqref{eqn:w=phij-ibvp} with data $\phi_0 = w_+|_{\partial_p Q_0}$.
Defining $w_{\tau} = w_0$ on $\tilde{W} \cap Q_0$ proves the case $j=0$.

\smallskip \noindent \textbf{Step 1: The initial-boundary value problem.}
We now suppose that the solution $w_{\tau}$ is defined on the domain $\tilde{W} \cap \bigcup_{j=0}^J Q_j$ for some $J$, and will show that it can be extended to $\tilde{W} \cap \Bigl( \bigcup_{j=0}^J Q_j \cup Q_{J+1} \Bigr)$.
For the inductive step, let $w_J$ be the solution of~\eqref{eqn:w=phij-ibvp} on $Q_J$ with boundary datum $\phi_J$ satisfying~\eqref{eqn:phi-j-data}, hence $w_- \leq w_J \leq w_+$ by the comparison principle of Lemma~\ref{lemma:comparison-principle}.
We fix a $C^{\infty}$ cutoff function $\chi$ with
\[
\chi: \bR \to [0,1], \qquad \chi \equiv 1 \quad \text{on } \; \bigl[0, \tfrac{2}{5} \bigr], \qquad \chi \equiv 0 \; \text{on } \; \bigl[ \tfrac{9}{20} ,\infty \bigr), \qquad 0 \leq |\chi'| \leq 10.
\]
Then, we define the $(J+1)$-st initial datum on $\partial_p Q_{J+1}$ by setting
\[
\phi_{J+1}(x) := \Bigl[ 1 - \chi \Bigl( \tfrac{t_{J+1}}{h(x)^4} \Bigr) \Bigr] w_+ (x , - t_{J+1}) + \chi \Bigl( \tfrac{t_{J+1}}{h(x)^4} \Bigr) w_J (x , - t_{J+1}) \quad \text{on } \; U_J \times \{ - t_{J+1} \},
\]
and $\phi_{J+1} = w_+$ on $\partial U_{J+1} \times [ - t_{J+1}, - t_{J+2}] \cup (U_{J+1} \setminus U_J) \times \{ - t_{J+1} \}$.
Since $\chi \equiv 0$ on $[ \frac{9}{20} , \infty)$ and $\chi \equiv 1$ on $[0, \frac{2}{5}]$, we have $\chi \bigl( \frac{t_{J+1}}{h(x)^4} \bigr) = 0$ for $x \in U_J$ with $h(x)^4 \leq \tfrac{10}{9} t_J$ and $1 - \chi \bigl( \frac{t_{J+1}}{h(x)^4} \bigr) = 0$ for $h(x)^4 \geq \tfrac{5}{4} t_J$.
Therefore, $\phi_{J+1} = w_J$ for $h(x)^4 \geq \tfrac{5}{4} t_J$ and $\phi_{J+1}$ extends smoothly to $\phi_{J+1}(x) = w_+(x, - t_{J+1})$ for $x \in U_{J+1} \cap \{ h(x)^4 \leq \frac{20}{9} t_{J+1}\}$.
In particular, $\phi_{J+1} \in C^{\infty} (\partial_p Q_{J+1})$ and $w_- \leq \phi_{J+1} \leq w_+$.
We let
\[
    Q^{(\sigma)}_{J+1} := U_{J+1} \times ( - t_{J+1}, \sigma - t_{J+1}) \subseteq Q_{J+1}, \qquad \sigma \in (0, t_{J+1} - t_{J+2}].
\]
By construction, the domain $U_{J+1}$ has smooth boundary and the datum $\phi_{J+1} \in C^{\infty} (\partial_p Q_{j+1})$ is smooth; in particular, it satisfies the $C^0$ compatibility condition at the corner
$\partial U_{J+1} \times \{ -t_{J+1} \}$ as in~\cite{ladyzhenskaya}*{Ch. V \S 6}.
The property~\eqref{eqn:qj-intersection} implies $\tilde{W} \cap \{ t = - t_{J+1} \} \subset (U_{J+1} \setminus \tilde{U}_{J+1}) \times \{ - t_{J+1} \}$, so the operator $\cM_{\zeta} = \cM_0$ is linear in a neighborhood of the corner $\partial U_{J+1} \times \{ - t_{J+1} \}$.
Linearizing $\cM_{\zeta}$ at $\phi_{J+1}$ produces a strictly parabolic operator $(\partial_t - \cL_{\zeta})$, so we can apply the linear theory for the local Dirichlet problem with zeroth-order compatibility at the corner~\cite{ladyzhenskaya}*{Ch. IV \S 5} to deduce invertibility in the Banach space $C^{2+\alpha, 1 + \frac{\alpha}{2}}$.
Using the Schauder fixed point theorem as in~\cite{lieberman}*{Theorem 2.2} and the local existence results~\cite{lieberman-book}*{Theorems 8.2 \& 8.3}, we obtain a $C^{2+\alpha, 1 + \frac{\alpha}{2}}$ solution $w_{J+1}$ on $Q^{(\ve)}_{J+1}$ for small $\ve > 0$, such that for any $V \subset U_{J+1}$ with $\textup{dist}( \partial V, \partial U_{J+1} ) > 0$, we have
\begin{equation}\label{eqn:w-J+1-ibvp}
w_{J+1} \in C^{2+\alpha, 1 + \frac{\alpha}{2}} (Q_{J+1}^{(\ve)}) \cap C^0( \bar{Q}^{(\ve)}_{J+1}), \qquad w_{J+1} \in C^{2+\alpha, 1 + \frac{\alpha}{2}} ( V \times [ - t_{J+1}, \ve - t_{J+1} ]).
\end{equation}
See Remark~\ref{rmk:solve-ibvp-by-approximation} for an alternative proof of the short-time existence of a solution using an approximation by data $\phi^{(k)}_{J+1}$ satisfying the compatibility conditions~\cite{ladyzhenskaya}*{Ch. V \S 6} of all orders at the corner.

Let $\cS$ denote the set of $\sigma \in (0,t_{J+1} - t_{J+2}]$ such that the problem~\eqref{eqn:w=phij-ibvp} admits a solution $w_{J+1}$ on $Q^{(\sigma)}_{J+1}$ with finite gradient.
For any $\sigma \in \cS$, once $w_{J+1}$ satisfies the properties~\eqref{eqn:w-J+1-ibvp}, the equation is classically parabolic and $\phi_{J+1} \in C^{\infty} (\partial_p Q_{J+1})$, so the Krylov-Safonov theorem and standard bootstrapping arguments by parabolic Schauder theory of~\cite{ladyzhenskaya} give $w_{J+1} \in C^{\infty}_{\text{loc}}(Q^{(\sigma)}_{J+1})$, and $w_{J+1} \in C^{\infty}( V \times [-t_{J+1}, \sigma - t_{J+1}])$ is smooth up to the initial time-slice whenever $\text{dist}( \partial V, \partial U_{J+1}) > 0$, by the initial-boundary regularity theory of Lieberman~\cite{initial-boundary-regularity}.
Thus, by a continuity argument, either $\cS$ is open and closed, so $\cS \neq \varnothing$ makes $\cS = (0,t_{J+1} - t_{J+2}]$, or there exists a maximal $\sigma^* < (t_{J+1} - t_{J+2})$ such that $\sup_{Q_{J+1}^{(\sigma)}} |\nabla w_{J+1}| \to \infty$ as $\sigma \uparrow \sigma^*$.
Since $w_- \leq \phi_{J+1} \leq w_+$, the comparison principle~\ref{lemma:comparison-principle} gives $w_- \leq w_{J+1} \leq w_+$ for the maximal solution $w_{J+1} \in C^{\infty}(Q^{(\sigma^*)}_{J+1}) \cap C^0( \bar{Q}^{(\sigma^*)}_{J+1})$ in this domain.

\smallskip \noindent \textbf{Step 2: Extending the solution $w_{\tau}$ to $Q_{J+1}^{(\sigma^*)}$.}
We claim that extending $w_{\tau}$ to $\tilde{W} \cap \left( \bigcup_{j=0}^J Q_j \cup Q_{J+1}^{(\sigma^*)} \right)$ by $w_{\tau} = w_{J+1}$ on $Q_{J+1}^{(\sigma^*)}$ produces a smooth solution of $\cM_{\zeta}$.
Since $w_{\tau}$ is smooth on $\tilde{W} \cap \bigcup_{j=0}^J Q_j$, by the inductive hypothesis, and on $Q^{(\sigma^*)}_{J+1}$, by the discussion of Step 1, it suffices to examine the interface $\tilde{W} \cap \bigl( \bar{Q}_J \cap \bar{Q}_{J+1}) =  \tilde{W} \cap \bigl( \bar{U}_J \times \{ - t_{J+1} \} )$.
Using $\phi_{J+1} = w_J$ for $h(x)^4 \geq \frac{5}{4} t_J$, we show that the functions $w_J$ and $w_{J+1}$ glue smoothly across the interface $\{ h(x)^4 > \frac{5}{4} t_J \} \times \{ - t_{J+1} \}$, so that
\[
w_{\tau} = w_J \; \text{ on } \; \{ h(x)^4 > \tfrac{5}{4} t_J \} \times (-t_J, -t_{J+1}], \quad w_{\tau} = w_{J+1} \; \text{ on } \; \{ h(x)^4 > \tfrac{5}{4} t_J \} \times (- t_{J+1}, \sigma^* - t_{J+1}],
\]
defines a smooth function.
Indeed, consider a point $(x_0,- t_{J+1})$ on the interface and some $\rho_0 > 0$ with 
\[
P^{\pm}_{2\rho_0}(x_0,- t_{J+1}) := B_{2\rho_0}(x_0) \times [-t_{J+1} - (2\rho_0)^2, - t_{J+1} + (2\rho_0)^2] \subset \{ h(x)^4 > \tfrac{5}{4} t_J \} \times [ - t_J , \ve - t_{J+1} ].
\]
In particular, $\phi_{J+1} = w_J$ on $B_{2\rho_0}(x_0) \times \{ - t_{J+1} \}$ and the operator $\cM_{\zeta}$ is uniformly parabolic in $P^{\pm}_{2\rho_0}(x_0, - t_{J+1})$, hence~\cite{initial-boundary-regularity}*{Theorem 1.5} implies that $w_{J+1} \in C^{\infty} (B_{\rho_0}(x_0) \times [-t_{J+1}, - t_{J+1} + \rho_0^2])$.
Moreover, $w_J \in C^{\infty} (B_{\rho_0}(x_0) \times [ -t_{J+1}, - t_{J+1} + \rho_0^2])$ by the inductive hypothesis.
Consequently, all spatial derivatives agree on the interface, meaning that
\[
D^k_x w_{J+1}(x, - t_{J+1}) = D^k_x w_J(x, - t_{J+1}) \quad \text{on } \; B_{\rho_0}(x_0) \times \{ - t_{J+1} \} .
\]
By smoothness, evaluating the PDE $\cM_{\zeta} w = \partial_t w - F(x,t,w,\nabla w, \nabla^2 w) = 0$ at $(x, - t_{J+1})$ gives
\[
\partial_t w_{J+1}(x, - t_{J+1}) = F(x, - t_{J+1}, w_J, \nabla w_J, D^2 w_J) = \partial_t w_J (x, - t_{J+1})
\]
and hence the glued function $w_{\tau}$ is $C^{2,1}$ across the interface $B_{\rho_0}(x_0) \times \{ - t_{J+1} \}$.
Finally, standard bootstrapping arguments by parabolic Schauder theory as in~\cite{ladyzhenskaya} show that
\[
w_{\tau} \in C^{2,1} ( B_{\rho_0}(x_0) \times [ - t_{J+1} - \rho_0^2, - t_{J+1} + \rho_0^2] ) \implies w \in C^{\infty} \Bigl( B_{\frac{\rho_0}{2}} (x_0) \times \bigl[ - t_{J+1} - \bigl( \tfrac{\rho_0}{2} \bigr)^2, - t_{J+1} + \bigl( \tfrac{\rho_0}{2} \bigr)^2 \bigr] \Bigr)
\]
so the glued function $w_{\tau}$ is smooth.
Next, Lemma~\ref{lemma:Qj-geometry} implies that $\tilde{W} \cap ( \bar{U}_J \times \{- t_{J+1} \} )$ remains a positive distance away from $\partial U_J \times \{ - t_{J+1} \}$.
Thus, for any $(x_0, - t_{J+1}) \in \tilde{W}$, we can find some sufficiently small $\rho_0 < \frac{1}{10} \min \{ \sqrt{\sigma^*} , h(x_0)^4, t_{J+1}^{10} \}$ to obtain a full parabolic neighborhood
\[
B_{\rho_0}(x_0) \times [ - t_{J+1} - \rho_0^2 , - t_{J+1} + \rho_0^2] \subset \tilde{W} \cap \bigl( \{ h(x)^4 > \tfrac{5}{4} t_J \} \times \{ - t_J, \sigma^* - t_{J+1} \} \bigr)
\]
where $w_{J+1}$ glues smoothly with $w_J$, thereby extending $w_{\tau}$ to $\tilde{W} \cap \Bigl( \bigcup_{j=0}^J Q_j \cup Q_{J+1}^{(\sigma^*)} \Bigr)$ as claimed.
Moreover, $w_- \leq w_{\tau} \leq w_+$ in $\tilde{W} \cap \bigcup_{j=0}^J Q_j$, by the inductive hypothesis, and $w_- \leq w_{J+1} \leq w_+$ by the comparison principle above, therefore $w_- \leq w_{\tau} \leq w_+$ in $\tilde{W} \cap \left( \bigcup_{j=0}^J Q_j \cup Q_{J+1}^{(\sigma^*)} \right)$.

\smallskip \noindent \textbf{Step 3: Derivative bounds for $w_{\tau}$ in $\tilde{W}$.}
Our initial observation shows that any gradient blowup of $w_{J+1}$ cannot occur in the region $\{ t < - \tfrac{1}{50 m} h(x)^4 \}$ where $\cM_{\zeta}$ coincides with the linear operator $\cM_0$, and Lemma~\ref{lemma:Qj-geometry} shows that $Q_{J+1} \cap \{ t \geq - \frac{1}{30 m }h(x)^4 \} \subset (U_J \setminus \tilde{U}_j) \times [ -t_{J+1}, - t_{J+2}]$ remains a positive distance away from $\partial U_{J+1} \times [ - t_{J+1} - t_{J+2}]$.
Thus, it suffices to prove that $w_{J+1}$ cannot have gradient blowup at any $(x_0, t_0) \in (U_J \setminus \tilde{U}_J) \times [ - t_{J+1}, \sigma^* - t_{J+1}]$ with $t_0 \geq - \frac{1}{30 m} h(x_0)^4$ and $h(x_0)^4 > t_{J+1}$.
Such points lie in the region $(x_0,t_0) \in \tilde{W} = \{ - \frac{1}{5} h(x)^4 < t < 0 \}$ where $w_{J+1}$ agrees with the glued solution $w_{\tau}$, thus we can replace $w_{J+1}$ by $w_{\tau}$ in the ensuing arguments.

Using $|t_0| \leq \frac{1}{30 m} h(x_0)^4$, the construction of $w_{\pm}$ from Corollary~\ref{cor:general-barrier-functions} shows that
\begin{align*}
w_{\tau}(x_0,t_0) &< w_+(x_0,t_0) < 2(m-1) |t_0| + 2 h(x_0)^3 \exp \bigl( - h(x_0)^{- \frac{9}{8}}) < \tfrac{1}{15} h(x_0)^4
\end{align*}
for $|h|_{L^{\infty}(U)} \leq \tau_0$ sufficiently small.
Taking $\rho_0 = \sqrt{w_{\tau}(x_0,t_0)}$, we therefore obtain $\rho_0 < \sqrt{2mc} \, h(x_0)^2$ with $c = \frac{1}{30m}$ and $4cm < \frac{1}{5}$.
Now, Lemma~\ref{lemma:geometric-properties-of-h} implies that $P_{\rho_0}(x_0, t_0) \subset \tilde{W}$, hence $w_{\tau}$ is defined in the backwards parabolic cylinder $P_{\rho_0}(x_0,t_0)$.
Since $\rho_0 < h(x_0)^{\frac{9}{5}}$ and $P_{\rho_0}(x_0,t_0) \subset \tilde{W}$, we can apply the estimate~\eqref{eqn:u-plus-u-minus-refined-inequality} from Lemma~\ref{lemma:tight-barrier-estimates} with $\varphi_{w_{\tau}(x_0,t_0)}(t) = w_{\tau}(x_0,t_0) - 2(m-1)t$ the cylinder solution, which has $\varphi_{w_{\tau}(x_0,t_0)}(t) < 2m \rho_0^2 = 2 m w_{\tau}(x_0,t_0)$ in $P_{\rho_0}(x_0,t_0)$.
We therefore obtain
\begin{equation}\label{eqn:w-tau-two-bounds}
    \begin{split}
    \sup \bigl| w_{\tau}(x,t) - \varphi_{w_{\tau}(x_0,t_0)}(t - t_0) \bigr| \leq C(m,n) h(x_0)^{\frac{1}{10}} \rho_0^2 \qquad \text{in } \; P_{\rho_0}(x_0,t_0).
    \end{split}
\end{equation}
We may then take $\tau_0(m,n,\delta)$ sufficiently small and use $|h|_{L^{\infty}(U)} < \tau_0$ to make $C(m,n) h(x_0)^{\frac{1}{10}} < \mu_*$, for $\mu_*(m,n,\delta^2)$ defined in Proposition~\ref{prop:apply-small-perturbation} with $\alpha = \frac{1}{2}$ and $\delta$ replaced by $\delta^2$.
Therefore, the small perturbation Theorem applies to conclude that $w_{\tau} \in C^{2,\alpha} (P_{\rho_0/2}(x_0,t_0))$ and
\begin{equation}\label{eqn:w-full-hessian-bound}
\sup_{P_{\rho_0/2}(x_0,t_0)} \left( \rho_0^{-2} | w_{\tau} - \varphi_{\rho_0^2}(t-t_0) | + \rho_0^{-1} |\nabla_x w_{\tau}| + |D^2_x w_{\tau}| + |\partial_t w_{\tau} + 2(m-1)| \right) \leq \delta^2.
\end{equation}
In particular, $|\nabla_x w_{\tau}|^2 \leq \delta w_{\tau} < 4mh(x)^4$, so $w_{\tau}$ cannot have gradient blowup in $P_{\rho_0/2}(x_0,t_0)$.
Covering the region $\{ t > - \frac{1}{30m} h(x)^4 \}$ by such parabolic cylinders therefore shows that $w_{J+1}$ cannot have gradient blowup in any sub-domain $Q_{J+1}^{(\sigma)} \subseteq Q_{J+1}$, so it extends to a solution on all of $Q_{J+1}$.

\smallskip \noindent \textbf{Step 4: Conclusion.}
The above discussion shows that letting $w_{\tau} = w_{J+1}$ on $\tilde{W} \cap \bar{Q}_{J+1}$ produces a smooth function on $\tilde{W} \cap \bigcup_{j=0}^{J+1} Q_j$ with $w_- \leq w_{\tau} \leq w_+$, which satisfies $|D^2_x w_{\tau}| \leq \delta^2$ and $\partial_t w_{\tau} < -1$ in $\{ - \frac{1}{30 m} h(x)^4 < t < 0\}$ by~\eqref{eqn:w-full-hessian-bound}.
Moreover, for $(x,t) \in P_{\rho_0}(x_0,t_0)$, the bound~\eqref{eqn:w-tau-two-bounds} implies that $\frac{1}{2} w_{\tau}(x_0,t_0) < w_{\tau}(x,t) < 4m w_{\tau}(x_0,t_0)$ for $|h|_{L^{\infty}(U)} \leq \tau_0$ sufficiently small, so~\eqref{eqn:w-full-hessian-bound} gives $|\nabla_x w_{\tau}(x,t)|^2 < 2 \delta^3 w_{\tau}(x,t)$ for $\delta \in (0, \frac{1}{2m})$.
These two estimates show that $w_{\tau}^{-1} |\nabla_x w_{\tau}(x,t)|^2 + |D^2_x w_{\tau}| < \delta$, establishing the properties~\eqref{eqn:w-tau-final-condition} of $w_{\tau}$ and completing the induction.
Since $\zeta = 1$ in $W = \{ - \frac{1}{100m} h(x)^4 < t < 0 \}$, we have $\cM_1 w_{\tau} = 0$ and the bounds $w_- \leq w_{\tau} \leq w_+$ imply $\bigl| h(x)^{-3} \exp( h(x)^{- \frac{9}{8}}) ( w_{\tau} + 2(m-1) t ) - 1 \bigr| < h(x)$ as in Corollary~\ref{cor:general-barrier-functions}.
For $|h|_{L^{\infty}(U)} \leq \tau_0$ sufficiently small, this lets us further bound $\exp( - \frac{1}{h(x)^2}) < w_{\tau} + 2(m-1) t < \exp ( -\frac{1}{h(x)^{\frac{11}{8}}})$, so $w_{\tau}$ satisfies all the desired properties in the domain $W$.
\end{proof}

\begin{remark}\label{rmk:solve-ibvp-by-approximation}
    For an alternative proof of the short-time solvability of the initial-boundary value problem~\eqref{eqn:w=phij-ibvp} in $Q^{(\sigma)}_{J+1}$ with data $\phi_{J+1}$, we could follow the computations of Corollary~\ref{cor:general-barrier-functions} to see that the modified barrier functions $\tilde{w}_{\pm} = w_{\pm} \pm (h(x)^4 + t) \exp ( - h(x)^{- \frac{9}{8}})$ again have $\cM_{\zeta} \tilde{w}_- < 0 < \cM_{\zeta} \tilde{w}_+$ and $\tilde{w}_- < w_- < w_+ < \tilde{w}_+$ everywhere in $\{ - h(x)^4 < t < 0 \}$, for $|h|_{L^{\infty}(U)} \leq \tau_0$ small, and satisfy the estimates of Lemma~\ref{lemma:tight-barrier-estimates}.
We consider a sequence of data $\phi^{(k)}_{J+1} \in C^{\infty} ( \partial_p Q_{J+1}^{(\sigma)})$ such that $\phi^{(k)}_{J+1} \to \phi_{J+1}$ uniformly on $\partial_p Q_{J+1}$, each $\phi^{(k)}_{J+1}$ satisfies the compatibility conditions of all orders at the corner $\partial U_{J+1} \times \{ -t_{j+1} \}$ as in~\cite{ladyzhenskaya}*{Ch. V \S 6}, $\tilde{w}_- \leq \phi^{(k)}_{J+1} \leq \tilde{w}_+$ on $\partial_p Q_{J+1}$, and $\phi^{(k)}_{J+1} = \phi_{J+1} = w_J$ on $\{ h(x)^4 > \tfrac{5}{4} t_J \} \times \{ - t_{J+1} \}$.
The latter construction is possible because Lemma~\ref{lemma:Qj-geometry} implies that
\[
\textup{dist} ( \{ h(x)^4 = \tfrac{5}{4} t_J \}, \{ h(x)^4 = \tfrac{10}{9} t_J \}  ) > 0 \qquad \text{and} \qquad \textup{dist}( \{ h(x)^4 = \tfrac{5}{4} t_J \} , \partial U_j ) > 0.
\]
Applying the discussion of this section to the problem~\eqref{eqn:w=phij-ibvp} for $\phi_{J+1}^{(k)}$, we obtain solutions $w_{J+1}^{(k)} \in C^{\infty} (\bar{Q}^{(\sigma^*)}_{J+1})$ for each $k$.
Finally, the bounds~\eqref{eqn:w-tau-two-bounds} obtained through the inductive step would apply to each $w^{(k)}_{J+1}$, allowing us to take a uniform subsequential limit $w^{(k)}_{J+1} \xrightarrow{C^{\infty}_{\textup{loc}}} w_{j+1}$, by Arzel\`a-Ascoli, with $w_{J+1} = \phi_{j+1}$ on $\partial_p Q^{(\sigma^*)}_{J+1}$ and $w_{J+1} \in C^{\infty}(Q_{J+1}^{(\sigma^*)}) \cap C^0( \bar{Q}^{(\sigma^*)}_{J+1})$ having the claimed regularity.
\end{remark}

\begin{corollary}\label{corollary:higher-derivative}
For each $k, \ell \in \bN_0$, there is a constant $C = C(m, n, k, \ell)$ such that
\begin{equation}\label{eqn:interior-estimate}
    \Bigl| D^k_x \partial_t^{\ell} \bigl( w_{\tau}(x,t) + 2(m-1) t \bigr) \Bigr| \leq C \tau  \exp( - \tfrac{8}{h(x)}) \quad \text{in } \; \bigl\{ -\tfrac{1}{200 m} h(x)^4 < t < - \tfrac{1}{400 m} h(x)^4 \bigr\}.
\end{equation}
\end{corollary}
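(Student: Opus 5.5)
The plan is to set $v := w_{\tau} + 2(m-1)t$ and use that $v$ solves a linear heat equation in the relevant region. By construction $\zeta = \eta(t/h(x)^4)$ vanishes wherever $t < -\tfrac{1}{60m} h(x)^4$. The slab $\{-\tfrac{1}{200m}h^4 < t < -\tfrac{1}{400m}h^4\}$ lies inside $W$, where $\zeta = 1$, so this is not directly available there. I will therefore work with the equation $\cM_1 w_\tau = 0$ itself. In the form $\partial_t v - \Delta v + \frac{Q(v) + 2|\nabla v|^2}{4w_\tau + |\nabla v|^2} = 0$, with $\nabla v = \nabla w_\tau$ and $D^2 v = D^2 w_\tau$, this is a quasilinear equation for $v$. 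Its coefficients are smooth functions of $(w_\tau, \nabla v)$, and the nonlinearity is quadratic in $(\nabla v, D^2 v)$ divided by $w_\tau$.

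First, I fix a point $(x_0,t_0)$ in the slab and set $\rho := c\, h(x_0)^2$ with $c$ small. By Lemma~\ref{lemma:geometric-properties-of-h}, \eqref{eqn:cylinder-containment-in-enlarges}, the cylinder $P_{\rho}(x_0,t_0)$ lies in $\{-\tfrac{1}{30m}h^4<t<0\}\subset\tilde W$, with $h \simeq h(x_0)$ there. On this cylinder three facts hold:
\begin{itemize}
\item $w_\tau \geq 2(m-1)|t| \geq c' h(x_0)^4 \simeq \rho^2$ is comparable to $\rho^2$;
\item the $C^0$ bound $0 < v < \exp(-h^{-11/10})$ holds by \eqref{eqn:wt-conditions};
\item $|D^2 w_\tau| + w_\tau^{-1}|\nabla w_\tau|^2 < \delta$ holds by \eqref{eqn:w-tau-final-condition}.
\end{itemize}
Next, I rescale as in \eqref{eqn:w-u-rescaling}, setting $\tilde v(z,s) := \rho^{-2} v(x_0+\rho z, t_0+\rho^2 s)$ on $P_1$. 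The rescaled equation is uniformly parabolic with smooth coefficients, the rescaled $w$ is bounded below by a constant, and the $C^2$ norm is controlled by $\delta$. Parabolic Schauder theory and bootstrapping (as in the proof of Theorem~\ref{thm:solution-of-smcf-with-estimates}) then give $\|\tilde v\|_{C^{k,\ell}(P_{1/2})} \le C(k,\ell)$.

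To get the sharp smallness I will view the equation as linear in $\tilde v$. The nonlinear term equals $b^{ij}\tilde v_{ij} + b^i \tilde v_i$, with $b^{ij} = \tilde w^{-1}\tilde v_i \tilde v_j\cdot(\text{smooth})$ and $b^i = \tilde v_i\cdot(\text{smooth})$, and these coefficients have bounded $C^{k}$ norms by the previous step. Interior Schauder estimates for this linear equation therefore give $\|\tilde v\|_{C^{k+2,\ell+1}(P_{1/2})} \le C \|\tilde v\|_{L^\infty(P_1)} \le C\rho^{-2}\exp(-h(x_0)^{-11/10})$. Scaling back yields
\[
|D^k_x\partial_t^\ell v(x_0,t_0)| \le C\rho^{-k-2\ell}\exp(-h(x_0)^{-11/10}) \le C h(x_0)^{-2k-4\ell}\exp(-h(x_0)^{-11/10}).
\]

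Finally, since $h \le \tau$ is small, $h^{-2k-4\ell}\exp(-h^{-11/10}) \le \tau \exp(-8/h)$, because $h^{-11/10}$ dominates $8h^{-1} + (2k+4\ell)\log(1/h) + \log(1/\tau)$ as $h \to 0$. The inequality $\log(1/\tau) \le \log(1/h)$ holds since $h \le \tau$. Also $h(x) \simeq h(x_0)$ by \eqref{eqn:small-enclosure-h(x0)}, which lets me replace $h(x_0)$ by $h(x)$ at the cost of adjusting constants.

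The main obstacle is the linearization step. I must ensure the coefficients $b^{ij}, b^i$ have $C^k$ bounds independent of $x_0$, which needs the uniform lower bound $\tilde w \gtrsim 1$. This holds only because we stay a fixed fraction of $h^4$ away from $t = 0$; this is why the slab excludes a neighborhood of $t=0$.
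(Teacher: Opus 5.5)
Your proposal is correct and follows essentially the paper's route: $v = w_\tau + 2(m-1)t$ satisfies a linear parabolic equation, and interior Schauder estimates are applied on a cylinder of radius $\rho \sim h(x_0)^2$ kept a fixed fraction of $h^4$ away from $t=0$, so that $w_\tau \gtrsim \rho^2$. The loss $\rho^{-(k+2\ell)}$ is then absorbed into the gap between $\exp(-h^{-11/10})$ and $\tau \exp(-8/h)$. The differences are minor: you freeze the coefficients at $v$ itself rather than integrating the linearization along the segment to $-2(m-1)t$, which avoids a zeroth-order term, and your explicit bootstrap for scale-invariant $C^k$ bounds on the coefficients fills in a step the paper leaves implicit.

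One small slip: \eqref{eqn:cylinder-containment-in-enlarges} cannot give containment in $\{-\tfrac{1}{30m}h^4<t<0\}$ here. Its hypothesis forces $c\ge \tfrac{1}{200m}$, which yields only $\{-\tfrac{1}{50}h^4<t<0\}$. However, $P_\rho(x_0,t_0)\subset W$ follows directly from $|t|\le |t_0|+\rho^2$ and \eqref{eqn:small-enclosure-h(x0)} once your constant $c$ is small.
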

\begin{proof}
    Since the functions $w_{\tau}$ and $\varphi_0 = - 2(m-1) t$ are both solutions of the equation $\cM_1 w = 0$, their difference $v := w_{\tau} + 2(m-1) t$ satisfies a linear parabolic equation of the form
    \begin{equation}\label{eqn:v-linear-equation}
    \partial_t v - a^{ij}(x,t) v_{ij} - b^k(x,t) v_k - c(x,t) v = 0 \qquad \text{on } \; W := \{ (x,t) : x \in U, \; -h(x)^4 < t < 0 \}.
    \end{equation}
    The coefficients of this operator are obtained by linearizing the equation $\cM_1 w = 0$ along the segment $\tilde{w}_{\sigma} := (1-\sigma) ( - 2(m-1) t) + \sigma w_{\tau}$, for $0 \leq \sigma \leq 1$.
    We write
    \[
    \cM_1(u) = \partial_t u - A^{ij}(u, \nabla u) u_{ij} + 2(m-1) + B(u, \nabla u), \qquad A = I - \frac{p \otimes p}{4 r + |p|^2}, \quad B(r,p) = \frac{2 |p|^2}{4r + |p|^2}.
    \]
    We therefore obtain the coefficients of~\eqref{eqn:v-linear-equation} as
    \begin{align*}
    0 &= \cM_1(w) - \cM_1 \bigl( - 2(m-1) t \bigr) = \int_0^1 \frac{d}{d \sigma} \cM_1 [ \tilde{w}_{\sigma} ] = \partial_t v - a^{ij} v_{ij} - b^k v_k - cv, \\
    a^{ij} &= \delta^{ij} - \int_0^1 \frac{(\tilde{w}_{\sigma})_i ( \tilde{w}_{\sigma})_j}{4 \tilde{w}_{\sigma} + |\nabla \tilde{w}_{\sigma}|^2}, \quad b^k = \int_0^1 \bigl[ \partial_{p_k} A^{ij} (\tilde{w}_{\sigma})_{ij} - \partial_{p_k} B (\tilde{w}_{\sigma})], \quad c = \int_0^1 \frac{4Q(\tilde{w}_{\sigma}) + 8 |\nabla \tilde{w}_{\sigma}|^2}{( 4 \tilde{w}_{\sigma} + |\nabla \tilde{w}_{\sigma}|^2)^2} .
    \end{align*}
    Using the bounds $|\nabla w_{\tau}|^2 \leq \delta w_{\tau}$ and $\exp( - h(x)^{-2}) < w_{\tau} + 2(m-1) t < \exp( - h(x)^{- \frac{11}{8}})$ from Theorem~\ref{thm:solution-of-smcf-with-estimates}, we find $|\nabla \tilde{w}_{\sigma}|^2 = \sigma^2 |\nabla w_{\tau}|^2$ and $w_{\tau} \leq 10^3 m \delta^{-1} \tilde{w}_{\sigma}$ for any $\sigma \in [0,1]$ and $t < - \frac{1}{10^3m} h(x)^4$.
    Consequently, the operator~\eqref{eqn:v-linear-equation} is uniformly parabolic due to the uniform lower bound
    \[
    \lambda_{\min} (a^{ij}) \geq 1 -  \tfrac{|\nabla \tilde{w}_{\sigma}|^2}{4 \tilde{w}_{\sigma} + |\nabla \tilde{w}_{\sigma}|^2} = \tfrac{4}{4 + \tilde{w}_{\sigma}^{-1} |\nabla \tilde{w}_{\sigma}|^2} \geq \tfrac{1}{10^4 m}, \qquad \text{due to } \; |\nabla \tilde{w}_{\sigma}|^2 \leq |\nabla w_{\tau}|^2 \leq \delta w_{\tau} \leq 10^3 m \tilde{w}_{\sigma}.
    \]
    Moreover, the estimates of Theorem~\ref{thm:solution-of-smcf-with-estimates} imply that $v < \exp ( - h(x)^{- \frac{11}{10}})$.
    For any spacetime point $(x_0,t_0)$ with $- \frac{1}{200 m} h(x_0)^4 < t_0 < - \frac{1}{400m} h(x_0)^4$, we set $\rho := \frac{1}{10^3 m} h(x_0)^2$, hence~\eqref{eqn:small-enclosure-h(x0)} implies $\frac{99}{100} h(x_0) \leq h(x) \leq \frac{101}{100} h(x_0)$ for any $x \in B_{\rho}(x_0)$, for $\tau$ chosen sufficiently small.
    Consequently, the parabolic cylinder $P^{\pm}_{\rho}(x_0, t_0) := B_{\rho}(x) \times (t_0 - \rho^2, t_0 + \rho^2)$ satisfies $P^{\pm}_{\rho} \Subset W \cap \{ t < - \frac{1}{10^3 m} h(x)^4 \} $, so the standard interior parabolic Schauder estimates for equation~\eqref{eqn:v-linear-equation} as in \cites{ladyzhenskaya, lieberman-book} give
\[
\| D^k_x \partial_t^{\ell} v \|_{L^{\infty}(P_{\rho/2})} \leq C_{k \ell} \rho^{- (k + 2 \ell)} \|v\|_{L^{\infty}(P_{\rho})} \leq C_{k \ell} h(x)^{- 2 k - 4 \ell} \cdot \exp ( - h(x)^{- \frac{11}{10}}) \leq \tilde{C}_{k \ell} \tau \exp ( - \tfrac{8}{h(x)} ) 
\]
with $C_{k\ell}, \tilde{C}_{k\ell} = C, \tilde{C}(m,n,k,\ell)$ independent of $\tau,  \rho, y, t_0$.
This proves~\eqref{eqn:interior-estimate}.
\end{proof}
\begin{remark}\label{rmk:expand-the-domain}
It is clear that the above constructions can more generally be performed for arbitrary $0 < M_1 < M_2 < M_0$, producing a solution $w_{\tau}$ with $\partial_t w_{\tau} <-1$ and derivative estimates in the domain
\begin{align}
    &W := \Bigl\{ (x,t) \in \bR^n \times (-\infty, 0] \; : \; x \in U, \; - M_0 h(x)^4 < t < 0 \Bigr\}, \qquad U := \bR^n \setminus K, \label{eqn:W-input-M} \\
    & \left| D^k_x \partial_t^{\ell} \bigl( w_{\tau} + 2(m-1) t \bigr) \right| \leq C_{k \ell} \tau \exp \bigl( - \tfrac{8}{h(x)} \bigr) \qquad \text{in } \; \{ - M_2 h(x)^4 < t < - M_1 h(x)^4 \}, \label{eqn:interior-estimates-general}
\end{align}
for $C_{k\ell} = C(m,n,M_i,k,\ell)$, after rescaling $h(x)$ appropriately and taking $\tau_0(\delta,m,n,M_i)$ small.
\end{remark}

\section{The main theorem}\label{section:main-theorem}

We now prove Theorem~\ref{thm:main-theorem}.
We use Theorem~\ref{thm:solution-of-smcf-with-estimates} to produce a family of smooth $O(m)$-invariant hypersurfaces $\cG_t \subset \bR^{m+n}$ evolving by mean curvature flow in a Riemannian metric of the form
\begin{equation}\label{eqn:general-metric}
g_f|_{(x,\xi)} = \sum_{i=1}^n dx_i^2 + f(x,|\xi|^2) \sum_{j=1}^m d \xi_j^2 .
\end{equation}
Here $f(x,r)$ is a positive function, to be constructed, satisfying $\sup |D_x^k \partial^{\ell}_r (f-1)| < C_{k,\ell} \tau$ for all $k,\ell \geq 0$.
Given a family of hypersurfaces $\cG_t \subset \bR^{m+n}$ evolving by mean curvature flow, an \textit{arrival time} is a function $\tau$ defined on the set of reach of the flow, with values in $(-\infty,0]$, such that $\cG_t = \{ y : \tau(y) = t \}$.
In our situation, working with $\cG_t = \{ |\xi|^2 = w(x,t) \}$ leads us to consider a modified arrival time function $T(x,r)$  characterized by $T(x,|\xi|^2) =T(x,w(x,t)) = t$, so that $w(x,T(x,r)) = r$.
The two notions of arrival time are related by $\tau(x,\xi) = T(x, |\xi|^2)$ on the set of reach of the flow, and the cylinder solution $\varphi_0(t)$ has $T_0(r) = - \frac{r}{2(m-1)}$.
\begin{lemma}\label{lemma:arrival-time-property}
    The function $T(x,r)$ is an arrival time in the metric $g_f$ if and only if $\cP_f(T) = 0$, where
\begin{equation}\tag{A}\label{eqn:arrival-time-O(m)-invariant}
\begin{split}
  &\cP_f(T) := 1 + \Delta_x T + \frac{2 T_r + 4 r T_{rr}}{f} - f \frac{Q(T) + 8 r\frac{T_r}{f} \la \nabla_x T, \nabla_x T_r \rg + 8 r\frac{T_r^2}{f^2}(T_r + 2 r T_{rr})}{4 r T_r^2 + f \, |\nabla_x T|^2} \\
  & + \left( \frac{m}{2f} + \frac{2 f^{-1} r T_r^2}{4 r T_r^2 + f \, |\nabla_x T|^2} \right) \la \nabla_x f, \nabla_x T \rg + \frac{2r f_r}{f^2} T_r \left( \frac{(m-1)f}{r f_r} + m-2 + \frac{4 r T_r^2}{4 r T_r^2 + f \, |\nabla_x T|^2} \right).
\end{split}
\end{equation}
Here, $Q(T) := T_{x_i} T_{x_j} T_{x_i x_j} = D^2T( \nabla_x T , \nabla_x T)$ and $f, \nabla_x f, f_r$ are evaluated at points $(x,r)|_{r = |\xi|^2}$.
\end{lemma}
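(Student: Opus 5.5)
We derive \eqref{eqn:arrival-time-O(m)-invariant} from the level-set formulation of mean curvature flow in the metric $g_f$. A function $\tau$ on the region swept out by the flow is an arrival time for $\cG_t=\{\tau=t\}$ exactly when
\[
|d\tau|_{g_f}\,\on{div}_{g_f}\Bigl(\frac{\nabla^{g_f}\tau}{|\nabla^{g_f}\tau|_{g_f}}\Bigr)=-1 .
\]
This holds because the normal speed of the level sets is $1/|\nabla\tau|$, the unit normal is $\nu=\nabla\tau/|\nabla\tau|$, and the mean curvature is $H=\on{div}\,\nu$. Equivalently, in non-divergence form,
\[
\Delta_{g_f}\tau-\frac{\nabla^2_{g_f}\tau(\nabla\tau,\nabla\tau)}{|\nabla\tau|^2_{g_f}}+1=0 .
\]
Since $T(x,r)$ with $r=|\xi|^2$ satisfies $w(x,T(x,r))=r$, the ``if and only if'' reduces to checking that this level-set equation, applied to $\tau(x,\xi)=T(x,|\xi|^2)$, equals $\cP_f(T)=0$ after the substitution. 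The equivalence with MCF of $\cG_t$ is just the statement that the level sets move with normal velocity $-H\nu$, so no analysis is needed there.

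The computation proceeds in the following order. First record the metric data: $g^{x_ix_i}=1$, $g^{\xi_j\xi_j}=f^{-1}$, $\sqrt{\det g}=f^{m/2}$. The Christoffel symbols involve $\nabla_x f$ and $\partial_\xi f = 2 f_r\,\xi$. Next compute the derivatives of $\tau$:
\[
\partial_{x_i}\tau=T_{x_i},\qquad \partial_{\xi_j}\tau=2\xi_jT_r,\qquad |\nabla\tau|^2_{g_f}=|\nabla_xT|^2+4rT_r^2/f .
\]
Multiplying the norm by $f$ produces the common denominator $4rT_r^2+f|\nabla_xT|^2$ appearing in \eqref{eqn:arrival-time-O(m)-invariant}. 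For the Laplacian, use
\[
\Delta_{g_f}\tau=f^{-m/2}\partial_a\bigl(f^{m/2}g^{ab}\partial_b\tau\bigr).
\]
The $x$-part gives $\Delta_xT+\frac{m}{2f}\la\nabla_x f,\nabla_xT\rg$. The $\xi$-part gives $f^{-1}(2mT_r+4rT_{rr})$ plus terms from $\partial_\xi f^{m/2-1}$, which produce the $f_r$ contributions with coefficient $(m-2)$.

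Then compute the Hessian term $\nabla^2_{g_f}\tau(\nabla\tau,\nabla\tau)=\langle \nabla|\nabla\tau|^2/2,\nabla\tau\rangle$. This is the cleanest route, since it avoids Christoffel symbols entirely. Differentiate $\tfrac12\bigl(|\nabla_xT|^2+4rT_r^2/f\bigr)$ in $x$ and in $\xi$ via $r$, and pair with $\nabla^{g_f}\tau=(\nabla_xT,\,2\xi T_r/f)$. This produces the following terms:
\begin{itemize}
\item $Q(T)$;
\item the mixed term $\frac{8rT_r}{f}\la\nabla_xT,\nabla_xT_r\rg$;
\item the pure radial term $\frac{8rT_r^2}{f^2}(T_r+2rT_{rr})$;
\item terms in $\nabla_xf$ and $f_r$ coming from differentiating the $1/f$.
\end{itemize}
After that, divide by $|\nabla\tau|^2_{g_f}$, multiply through, and collect.

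The main obstacle is bookkeeping: matching the $f$-derivative terms exactly to the stated coefficients. These are $\frac{2f^{-1}rT_r^2}{4rT_r^2+f|\nabla_xT|^2}\la\nabla_xf,\nabla_xT\rg$ and the bracket multiplying $\frac{2rf_r}{f^2}T_r$. To make this match, I would separately track three pieces: the radial $\xi$-divergence terms, where $\sum_j\partial_{\xi_j}\xi_j=m$ yields the $(m-1)$ piece once $2T_r$ is split off; the $\partial_\xi f^{m/2-1}$ terms, which give the $(m-2)$ piece; and the Hessian-correction terms, which give the final fraction. As a sanity check I would verify the cylinder $f\equiv1$, $T_0=-r/(2(m-1))$. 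Then $\nabla_xT=0$ and the fraction term contributes $-\frac{8rT_r^2(T_r)/1}{4rT_r^2}=-2T_r$. So $\cP=1+2T_r+2(m-1)T_r\cdot\tfrac{1}{1}\dots$, and in the $f\equiv 1$ case this must reduce to $1+2(m-1)T_r=0$, consistent with the shrinking cylinder.
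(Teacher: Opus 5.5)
Your proposal is correct and follows essentially the same route as the paper. Both use the level-set equation for $F=T(x,|\xi|^2)-t$ and the divergence formula with $\sqrt{\det g}=f^{m/2}$. Both also compute the Hessian term as $\tfrac12\,dW(\nabla_g F)$ with $W=|\nabla_xT|^2+4rT_r^2/f$, which is exactly the paper's $\tfrac12\la\nabla_xT,\nabla_xW\rg+\tfrac{2rT_r}{f}W_r$. One small slip: your displayed cylinder sanity check omits the $-2T_r$ from the fraction term in the sum, though your conclusion $1+2(m-1)T_r=0$ is right.
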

\begin{proof}
    The evolving hypersurfaces $\cG_t$ are the level sets $\{ F =0 \}$ of the function $F(x,\xi,t) := T(x,|\xi|^2) - t$, so using $\partial_t F = - 1$ shows that they evolve by mean curvature flow under $g_f$ if and only if
    \begin{equation}\label{eqn:level-set-equation-here}
    1 + \on{div}_g \Bigl( \tfrac{\nabla_g F}{|\nabla_g F|_g} \Bigr) |\nabla_g F|_g = 0
    \end{equation}
    Let $r := |\xi|^2$, so using $\sqrt{\det g} = f^{\frac{m}{2}}$ and denoting $W := |\nabla_g F|^2_g = |\nabla_x T|^2 + \frac{4 r T_r^2}{f}$ produces $\nabla_g F = (\nabla_x T, \frac{2T_r}{f} \xi_j \partial_{\xi_j})$ and $|\nabla_g F|_g = \sqrt{W}$.
    Using $\on{div}_g X = f^{- \frac{m}{2}} \partial_a ( f^{\frac{m}{2}} X^a)$, we compute that
    \begin{align*}
\sqrt{W} \on{div}_g\!\left(\frac{\nabla_g F}{\sqrt W}\right)
&= \Delta_x T + \frac{2mT_r+4rT_{rr} + \frac{m}{2} \la \nabla_x f, \nabla_x T \rg + \frac{2(m-2)}{f} r f_r T_r }{f}-\frac{f \la \nabla_x T ,\nabla_x W \rg + 4r T_r W_r }{2 fW}, \\
\la \nabla_x T, \nabla_x W \rg &= 2 \, Q(T) + \frac{8 r T_r}{f} \la \nabla_x T, \nabla_x T_r \rg - \frac{4 r T_r^2}{f^2} \la \nabla_x f, \nabla_x T \rg, \\
W_r &= 2 \la \nabla_x T, \nabla_x T_r \rg + 4 \frac{T_r^2 + 2 r T_r T_{rr}}{f} - 4 r \frac{f_r}{f^2} T_r^2.
    \end{align*}
    Combining these computations, we arrive at
    \begin{align*}
        \frac{\la \nabla_x T,\nabla_x W \rg}{2}+\frac{2r T_r}{f}W_r &= Q(T) +\frac{8rT^2_r}{f^2} \left[ \frac{\la \nabla_x T , \nabla_x T_r \rg}{f^{-1} T_r} + T_r+2rT_{rr} - \frac{\la \nabla_x f, \nabla_x T \rg}{4} - \frac{rT_r f_r}{f} \right].
    \end{align*}
Finally, using $W = \frac{4 r T_r^2 + f |\nabla_x T|^2}{f}$ in the level set equation~\eqref{eqn:level-set-equation-here} proves the identity~\eqref{eqn:arrival-time-O(m)-invariant}.
\end{proof}
Due to the $O(m)$-invariance of our setting, the hypersurfaces $\cG_t = \{ (x,\xi): |\xi|^2= w(x,t) \}$ become singular when $w(x,t) = 0$, corresponding to $r=0$.
Therefore, constructing an evolving family $\{ \cG_t \}_{t < 0}$ with prescribed first-time singular set $\on{sing} \cG_t = K \times \{ 0 \} \subset \bR^n \times \bR^m$ at $t=0$ amounts to producing a modified arrival time function $T(x,r)$ in the metric $g_f$ with the property that
\begin{equation}\label{eqn:arrival-time-property}
    T(x,0) = 0 \quad \text{if } \; x \in K, \qquad T(x, s(x) ) = 0 \quad \text{if } \; x \in \bR^n \setminus K.
\end{equation}
The strictly positive function $s(x)> 0$ has the significance that in the cylindrical representation $\cG_t = \{ |\xi|^2 = w(x,t) \}$, the evolving functions $w(x,t)$ satisfy $w(x,0) = 0$ for $x \in K$ and $w(x,0) = s(x) > 0$ for $x \in \bR^n \setminus K$.
Our first observation is that the function $w_{\tau}$ of Theorem~\ref{thm:solution-of-smcf-with-estimates} produces an arrival time $T_w$ with $\cP(T_w) = 0$, and $T_w$ decays exponentially to the cylinder time $T_0(x,r) = - \frac{r}{2(m-1)}$.

\begin{proposition}\label{prop:arrival-time-construction}
Let $U \subset \bR^n$ and $h(x)$ be as in Definition~\ref{def:k-cutoff-h}.
There exists a positive function $s: U \to \bR_+$ with $s(x) < \tfrac{1}{100} h(x)^4$ and an arrival time function $T_w(x,r)$ defined in the region
\[
\{ (x,r) \in \bR^n \times [ 0,\infty) \; : \; x \in U, \; s(x) \leq r \leq 4 \, h(x)^4 \}
\]
with image $T_w(x,r) \in ( - 10 h(x)^4 , 0)$ and $T_w(x,s(x)) = 0$.
Moreover, $T_w$ satisfies
\begin{equation}\label{eqn:Tu-estimates-close}
\bigl| D^k_x \partial^{\ell}_r \bigl( T_w - T_0)(x,r) \bigr| \leq C \tau \exp( - \tfrac{4}{h(x)})    \qquad \text{in } \; \Omega_{1,3} = \{ (x,r) : x \in U, h(x)^4 \leq r \leq 3 h(x)^4 \}
\end{equation}
for each $k , \ell \in \bN_0$ and a constant $C = C(m,n,k,\ell)$.
\end{proposition}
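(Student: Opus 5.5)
We define $T_w$ by inverting $w_\tau$ in time, using the enlarged-domain version of Theorem~\ref{thm:solution-of-smcf-with-estimates} from Remark~\ref{rmk:expand-the-domain}. Concretely, we take $M_0$ large (say $M_0 = 10$ after rescaling $h$), together with $M_1 = 1/(4(m-1))$ and $M_2 = 2$. This produces a solution $w_\tau$ on $\{-M_0 h(x)^4 < t < 0\}$ with $\partial_t w_\tau < -1$, with the two-sided bounds $\exp(-h^{-2}) < w_\tau + 2(m-1)t < \exp(-h^{-11/10})$, and with the derivative estimates \eqref{eqn:interior-estimates-general} on the middle region $\{-M_2 h^4 < t < -M_1 h^4\}$.

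\textbf{Step 1: definition of $s$ and $T_w$.} Set $s(x) := w_\tau(x,0) = \lim_{t \uparrow 0} w_\tau(x,t)$; the limit exists by continuity of $w_\tau$ on $\bar W$. The lower barrier gives $s(x) \ge \exp(-h^{-2}) > 0$, and the upper barrier gives $s(x) < \exp(-h^{-11/10}) < \tfrac{1}{100} h^4$ once $\tau$ is small. For fixed $x$, the map $t \mapsto w_\tau(x,t)$ is strictly decreasing since $\partial_t w_\tau < -1$. Its value at $t=0$ is $s(x)$, and at $t = -M_0 h^4$ it exceeds $2(m-1) M_0 h^4 > 4 h^4$. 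The inverse function theorem therefore yields a unique smooth $T_w(x,r) \in (-M_0 h^4, 0]$ with $w_\tau(x, T_w(x,r)) = r$ for $s(x) \le r \le 4h^4$.

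We need the sharper containment $T_w \in (-10h^4, 0)$. Since $r \le 4h^4$ and $w_\tau \ge -2(m-1)t$, we get $-2(m-1)T_w \le r$, so $|T_w| \le 2h^4$. Moreover $T_w(x,s(x)) = 0$ by construction. Smoothness of $T_w$ in $(x,r)$ follows from the implicit function theorem applied to $G(x,t,r) = w_\tau(x,t) - r$, which is valid because $\partial_t w_\tau \neq 0$. At $r = s(x)$, i.e.\ at $t = 0$, we have smoothness of $w_\tau$ up to $t=0$ for $x \in U$, since $w_\tau \in C^\infty(\bar W \setminus (\partial U \times \{0\}))$. Finally, Lemma~\ref{lemma:arrival-time-property} with $f \equiv 1$ shows that $\cP_1(T_w) = 0$; this is the level-set reformulation of $\cM_1 w_\tau = 0$, since $\{|\xi|^2 = w_\tau(x,t)\}$ evolves by mean curvature flow.

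\textbf{Step 2: the estimate \eqref{eqn:Tu-estimates-close}.} Write $v := w_\tau + 2(m-1)t$, so that the defining relation becomes $r = -2(m-1)T_w + v(x,T_w)$. Equivalently,
\[
T_w - T_0 = \frac{v(x, T_w(x,r))}{2(m-1)}.
\]
For $(x,r) \in \Omega_{1,3}$ we have $h^4 \le r \le 3h^4$. Combined with $|v| < \exp(-h^{-11/10}) \ll h^4$, this gives $T_w \in \bigl(-\tfrac{3h^4}{2(m-1)} - \epsilon,\; -\tfrac{h^4}{2(m-1)} + \epsilon\bigr)$, which lies inside the region $\{-M_2 h^4 < t < -M_1 h^4\}$ where \eqref{eqn:interior-estimates-general} applies.

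We then differentiate the identity $T_w - T_0 = v(x,T_w)/(2(m-1))$ inductively. One $x$- or $r$-derivative of the right side produces $D_x v$, or $\partial_t v \cdot \partial T_w$. The derivatives of $T_w$ satisfy $\partial_r T_w = 1/\partial_t w_\tau$ and $\nabla_x T_w = -\nabla_x w_\tau / \partial_t w_\tau$. These are bounded, because $|\partial_t w_\tau| > 1$ and all derivatives of $w_\tau$ are bounded on the middle region by \eqref{eqn:interior-estimates-general}. By Faà di Bruno, $D^k_x \partial^\ell_r (T_w - T_0)$ is a finite sum of products, each containing at least one factor $D^a_x \partial^b_t v$ and otherwise only bounded factors. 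Each such term is therefore bounded by $C\tau \exp(-8/h)$, which is at most $C\tau \exp(-4/h)$.

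The main obstacle is bookkeeping rather than ideas. First, the derivatives of $T_w$ of all orders must be controlled inductively and uniformly; this works because they are polynomials in derivatives of $w_\tau$ divided by powers of $\partial_t w_\tau$. Second, one must ensure that Remark~\ref{rmk:expand-the-domain} applies to the chosen $M_0, M_1, M_2$, including that after rescaling $h$ the barrier bounds and $\partial_t w_\tau < -1$ persist on the enlarged domain $\{-M_0 h^4 < t < 0\}$. The remaining factor of $\exp(-4/h)$, obtained by comparing with $\exp(-8/h)$, absorbs any polynomial losses in $h$ should the rescaling introduce them.
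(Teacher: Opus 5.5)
Your proposal is correct and follows essentially the same route as the paper. Both arguments set $s=w_\tau(\cdot,0)$, invert $t\mapsto w_\tau(x,t)$ using $\partial_t w_\tau<-1$ on the enlarged domain of Remark~\ref{rmk:expand-the-domain}, and differentiate the identity $T_w-T_0=\frac{1}{2(m-1)}E(x,T_w)$ against the interior estimates~\eqref{eqn:interior-estimates-general}. The only differences are cosmetic: you pick different $M_1,M_2$, and you use Fa\`a di Bruno bookkeeping, which correctly avoids the $h^{-M_N}$ losses, where the paper uses a strong induction on the rearranged identity $(2(m-1)-E_t)\,D^k_x\partial^\ell_r(T_w-T_0)=\cR_{k\ell}$.
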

\begin{proof}
The solution $w_{\tau}$ obtained in Theorem~\ref{thm:solution-of-smcf-with-estimates} satisfies $\partial_t w_{\tau} < -1$ and $s(x) := w_{\tau}(x,0)$ has $\exp( -\frac{1}{h(x)^2}) < s(x) < \exp( -\frac{1}{h(x)^{\frac{11}{10}}})$.
Thus, the map $t \mapsto r := w_{\tau}(x,t)$ is invertible for any fixed $x \in U$ with inverse $T_w(x,r)$, producing a well-defined, smooth arrival time function for the hypersurfaces $\cG_t = \{ (x,\xi) : T_w(x,|\xi|^2) = t \}$ with $T_w(x,w_{\tau}(x,t)) = t$ and $\cP(T_w) = 0$ on the set of reach,
    \[
    \{ (x,r) : \; x \in U, \; s(x) := w_{\tau}(x,0) \leq r \leq w_{\tau}(x, - \tfrac{1}{200m} h(x)^4) \}.
    \]
    By construction, $T_w(x,s(x)) = T_x(x,w_{\tau}(x,0)) = 0$, $\partial_r T_w  = \frac{1}{\partial_t w_{\tau}(x,T_w)} < 0$, and $\nabla_x T_w = - \frac{ \nabla_x w_{\tau}(x,T_w)}{\partial_t w_{\tau}(x,T_w)}$.

    As discussed in Remark~\ref{rmk:expand-the-domain}, the construction of $w_{\tau}$ and the subsequent interior estimates of Corollary~\ref{corollary:higher-derivative} can be implemented more generally in a domain $\{ (x,t) : x \in U , - M_0 h(x)^4 < t < 0\}$ and in sub-domains $\{ - M_2 h(x)^4 < t < - M_1 h(x)^4 \}$ for $0<M_1<M_2<M_0$, with resulting constants $C = C(m,n,M_i,k,\ell)$.
    For $|h|_{L^{\infty}(U)} \leq \tau_0$, the bound $0 < w_{\tau} + 2(m-1) t < \exp ( - h(x)^{- \frac{11}{10}})$ implies
    \begin{equation}\label{eqn:w-tau-bounds}
        w_{\tau}(x , - 8 h(x)^4) > 4 h(x)^4, \qquad w_{\tau} (x, - \tfrac{1}{100m} h(x)^4) < \tfrac{1}{2} h(x)^4,
    \end{equation}
    so repeating the above discussion with $M_0 = 10, M_2 = 9, M_1 = \frac{1}{100m}$ produces a function $T_w$ defined on the set of reach $\{ (x,r): s(x) \leq r \leq w_{\tau}(x, - 8 h(x)^4) \}$, which now contains $\{ s(x) \leq r \leq 4h(x)^4 \}$.
    Moreover, $w_{\tau}$ satisfies the estimates~\eqref{eqn:interior-estimate} in $- 5 h(x)^4 < t < - \frac{1}{100m} h(x)^4$.
    The computation~\eqref{eqn:w-tau-bounds} shows that the image of $\{ - 5 h(x)^4 < t < - \frac{1}{100m} h(x)^4 \}$ under $(x,t) \mapsto w_{\tau}(x,t)$ contains $\{ \tfrac{1}{2} h(x)^4 \leq r \leq \tfrac{7}{2} h(x)^4 \}$, and $w_{\tau}(x,t) > - 2(m-1) t = \varphi_0(t)$ implies that $T_w(x,r) > T_0(r)$.
    To prove~\eqref{eqn:Tu-estimates-close}, let $E(x,t) := w_{\tau}(x,t) - \varphi_0(t)$, so for any $(x,r) \in \Omega_{1,3}$ we have
    \begin{equation}\label{eqn:key-relation-E}
    T_w(x,r) - T_0(r) = \tfrac{1}{2(m-1)} E(x, T_w(x,r)), \qquad \sup |D^k_x \partial^{\ell}_t E| \leq C \tau \exp( - \tfrac{8}{h(x)})
    \end{equation}
    coming from $r = w_{\tau}(x, T_w(x,r)) = - 2(m-1) T_0(r)$ and~\eqref{eqn:interior-estimates-general}.
    Here, $(x,r) \in \Omega_{1,3}$ implies that $T_w(x,r) \in [ - 5 h(x)^4 , - \frac{1}{100m} h(x)^4 ]$, making the estimate~\eqref{eqn:interior-estimates-general} valid, so $|T_w - T_0| \leq C \tau \exp ( - \frac{8}{h(x)})$.

    To prove the claim~\eqref{eqn:Tu-estimates-close}, we formulate the strong induction hypothesis that
\begin{equation}\label{eqn:induction-hypothesis-new}
|D_x^k\partial_r^{\ell}(T_w-T_0)| \leq C_N \tau\, h(x)^{-M_N}\exp \bigl(-\tfrac{8}{h(x)}\bigr) \qquad \text{in } \; \Omega_{1,3}, \quad k+\ell \leq N,
\end{equation}
for $C_N, M_N$ depending only on $(m,n,N)$ and not $\tau$.
The case $N=0$ is treated above; for $N=1$, differentiating~\eqref{eqn:key-relation-E} and rearranging terms gives
    \[
    \bigl( 2(m-1) - E_t(x,T_w)) \partial_r (T_w - T_0) = - \frac{E_t(x,T_w)}{2(m-1)} , \qquad \bigl( 2(m-1) - E_t(x,T_w) \bigr) \nabla_x T_w = \nabla_xE(x,T_w),
    \]
    since $\nabla_x T_0 = 0$.
    Using the estimate~\eqref{eqn:interior-estimates-general}, we have $2(m-1) - E_t(x,T_w) > (m-1)$ for $|h|_{L^{\infty}(U)} \leq \tau_0$, hence~\eqref{eqn:induction-hypothesis-new} is proved.
Next, for $k + \ell = N \geq 2$, we apply $D^k_x \partial^{\ell}_r$ to~\eqref{eqn:key-relation-E} and obtain the top-order term
\begin{equation}\label{eqn:Rkell-inductive-step}
\bigl( 2(m-1) - E_t(x,T_w) \bigr) D^k_x \partial^{\ell}_r (T_w - T_0) = \cR_{k \ell}
\end{equation}
where every term in $\cR_{k \ell}$ is a linear combination of products involving either derivatives $D^a_x \partial^b_t E(x,T_w)$ with $a+b \geq 1$ or derivatives $D^i_x \partial^j_r (T_w - T_0)$ with $i+j \leq N-1$; no higher derivatives appear due to $D^k_x \partial^{\ell}_r T_0 = 0$ unless $(k,\ell) = (0,1)$.
The interior estimates~\eqref{eqn:interior-estimates-general} together with the strong induction hypothesis~\eqref{eqn:induction-hypothesis-new} imply that $|\cR_{k\ell}(x,r)| \leq C_{k\ell} \tau h(x)^{-M_{k\ell}} \exp ( - \frac{8}{h(x)})$, hence~\eqref{eqn:Rkell-inductive-step} proves the inductive step~\eqref{eqn:induction-hypothesis-new}.
Finally, using $h(x)^{-M_N} \exp ( - \frac{8}{h(x)}) \leq C_N \exp ( - \frac{4}{h(x)})$ completes the proof of the result.
\end{proof}

\subsection{Proof of the Main Theorem}

We now use Proposition~\ref{prop:arrival-time-construction} to construct the evolving hypersurfaces $\{ \cG_t \}$ considered in Theorem~\ref{thm:main-theorem} via their arrival time functions.
To this end, we glue the arrival time function $T_w$ produced above to the cylinder time $T_0$, obtaining an arrival time $T$ on the swept-out region that satisfies the property~\eqref{eqn:arrival-time-property}.
This will produce an approximate solution of equation $\cP(T) = 0$, which we then realize as an exact solution of $\cP_f(T) = 0$ in the modified metric $g_f$, for a function $f(x,r)$ satisfying estimates of the form $|D^k_x \partial^{\ell}_r(f-1)| < C_{k,\ell} \tau$ for all $k,\ell \geq 0$.
\begin{proof}[Proof of Theorem~\ref{thm:main-theorem}]
We fix a $C^{\infty}$ cutoff function with the properties $\eta(t) = 1$ for $t \leq 2$, $\eta(t) = 0$ for $t \geq 3$, $|\eta'(t)| + |\eta''(t)| \leq 20$ and $|D^k_t \eta(t)| \leq C_k$ for $k \geq 3$.
For $\tau \in (0,\tau_0]$ sufficiently small so that Proposition~\ref{prop:arrival-time-construction} applies, we take $T_w$ and $s(x)$ as constructed therein and define the function
\begin{equation}\label{eqn:arrival-time-function}
    T(x,r) := \begin{cases}
        T_0(r), & (x,r) \in K \times [0, \infty), \\
        \eta \bigl( \frac{r}{h(x)^4} \bigr) T_w(x,r) + \bigl( 1 - \eta \bigl( \frac{r}{h(x)^4} \bigr) \bigr) T_0(r), & x \in U, \; r \geq s(x).
    \end{cases}
\end{equation}
By construction, for $x \in U$ and $s(x) \leq r \leq 4 h(x)^4$ we can use the expression~\eqref{eqn:key-relation-E} to write
\begin{equation}\label{eqn:enjoy-the-estimates-for-T}
T(x,r) - T_0(r) = \eta \bigl( \tfrac{r}{h(x)^4} \bigr) \bigl( T_w(x,r) - T_0(r) \bigr) = \tfrac{1}{2(m-1)} \eta \bigl( \tfrac{r}{h(x)^4} \bigr) E(x, T_w(x,r)),
\end{equation}
so that $T(x,r)$ is well-defined for $\{ x \in U, r \geq s(x) \}$ even beyond the region where $T_w$ exists, namely $r \geq 4 h(x)^4$, due to $\eta( \frac{r}{h(x)^4}) = 0$ and $T(x,r) = T_0(r)$ for $r \in [ 4h(x)^4, \infty)$.
Moreover, $T(x,r) = T_w(x,r)$ for $s(x) \leq r \leq 2 h(x)^4$.
To summarize, $T(x,r)$ satisfies
\[
\cP(T) = 0 \qquad \text{for } \; (x,r) \in \bigl( K \times [0,\infty) \bigr) \; \cup \; \bigr\{ (x,r): x \in U, r \in [s(x) , 2 h(x)^4] \cup [ 3 h(x)^4, \infty) \bigr\}.
\]
Here, $\cP = \cP_1$ denotes the Euclidean arrival time operator, which for $f=1$ assumes the form
\begin{align*}
      \cP(T) &:= 1 + \Delta_x T + 2 (T_r + 2r T_{rr}) - \frac{Q(T) + 8 r T_r \la \nabla_x T, \nabla_x T_r \rg + 8 r T_r^2(T_r + 2 r T_{rr})}{4 r T_r^2 + |\nabla_x T|^2} + 2(m-1) T_r.
\end{align*}
Moreover, $T(x,r)$ enjoys the closeness estimates~\eqref{eqn:Tu-estimates-close} to the cylinder time $T_0(r)$ in the region $\{ h(x)^4 \leq r \leq 3 h(x)^4 \}$.
For the function $f$ that appears in the metric $g_f$ of~\eqref{eqn:general-metric}, we let $z(x,r) := \frac{1 - f(x,r)}{r}$ with $(\nabla_x z, \partial_r z) = - (\nabla_x f, \frac{\partial_r f}{r} - \frac{f}{r^2} )$, so the arrival time equation $\cP_f(T) = 0$ from~\eqref{eqn:arrival-time-O(m)-invariant} becomes
\begin{equation}\label{eqn:final-transport-equation}
\cP(T)-rz\bigl(\cE(T,z)+\cP(T)\bigr)-r\cA(T,z)\langle \nabla_x z,\nabla_x T\rangle-\cB(T,z) (z + rz_r) = 0.
\end{equation}
Letting $\cR(T,z) := \frac{4 r T_r^2}{4 r T_r^2 + (1 - r z ) |\nabla_x T|^2}$, we obtain the coefficients of equation~\eqref{eqn:final-transport-equation} as
\allowdisplaybreaks{
\begin{align*}
    \cA(T,z) &:= \frac{1}{2} \bigl(m+\cR(T,z) \bigr), \qquad \cB(T,z) := \frac{2rT_r}{1-rz} \bigl(m-2+\cR(T,z) \bigr), \\
    \cE(T,z) &= -2\bigl(T_r+2rT_{rr}\bigr)-2(m-1)T_r \\
    &\quad -\frac{(1-rz)\bigl(Q(T)T_r-2|\nabla_xT|^2\langle \nabla_xT,\nabla_xT_r\rangle\bigr) - 2 T_r \bigl(T_r+2rT_{rr}\bigr)\bigl(4rT_r^2+(2-rz)|\nabla_xT|^2\bigr) }{ T_r \bigl(4rT_r^2+|\nabla_xT|^2\bigr)} \cR,
\end{align*}}
where $Q(T) = T_{x_i} T_{x_j} T_{x_i x_j}$.
For the cylinder time $T_0(r) = - \frac{r}{2(m-1)}$, we have $(T_0)_r = - \frac{1}{2(m-1)}$ and $D^k_x \partial^{\ell}_r T_0 = 0$ unless $(k,\ell) = (0,1)$.
Thus, $\cR(T_0,z) = 1, \cP(T_0) = 0$, and equation~\eqref{eqn:final-transport-equation} becomes
\begin{equation}\label{eqn:cylinder-equation-terms}
    \cA(T_0, z) = \tfrac{m+1}{2}, \quad \cB(T_0,z) = - \tfrac{r}{1-rz}, \quad \cE(T_0,z) = 1, \qquad z_r + z^2 = 0. 
\end{equation}
Using these explicit terms, we rewrite equation~\eqref{eqn:final-transport-equation} in the form
\begin{equation}\label{eqn:simplified-in-gluing-region}
    z_r + z^2 = - \la a(x,r,z), \nabla_x z \rg + b(x,r,z) \, z + c(x,r,z),
\end{equation}
where we introduced the terms
\begin{align*}
    a(x,r,z) &:= \tfrac{\cA(T,z)}{\cB(T,z)} \nabla_x T, \qquad b(x,r,z) := - \tfrac{\cB(T,z) (1-rz) + r \cE(T,z) + r \cP(T)}{r \cB(T,z)}, \qquad c(x,r,z) := \tfrac{\cP(T)}{r \cB(T,z)}.
\end{align*}
In view of the estimate~\eqref{eqn:Tu-estimates-close}, which also applies to the difference~\eqref{eqn:enjoy-the-estimates-for-T} in the gluing region $\Omega_{1,3} = \{ h(x)^4 \leq r \leq 3 h(x)^4 \}$, we have $1-rz \in [ \frac{1}{2}, \frac{3}{2}]$ for $|h|_{L^{\infty}(U)} \leq \tau_0$ and $|z| < \frac{1}{2}$.
Letting $\Omega_{1,3} := \{ x \in U, 1 \leq \frac{r}{h(x)^4} \leq 3 \}$, we therefore find that the coefficients $a,b,c$ satisfy the pointwise bounds
\begin{equation}\label{eqn:bounds-on-a,b,c}
    r^k |D^k_{x,r,z} a| + r^{k+1} |D^k_{x,r,z} b| + r^{k+1} |D^k_{x,r,z} c| \leq C_k \, \tau \, \exp \bigl( - \tfrac{2}{h(x)} \bigr) \quad \text{for } \; (x,r) \in \Omega_{1,3}, \; |z| < \tfrac{1}{2}.
\end{equation}
The bound~\eqref{eqn:bounds-on-a,b,c} follows from applying the estimates~\eqref{eqn:Tu-estimates-close} from Proposition~\ref{prop:arrival-time-construction}, followed by the bound $h(x)^{-M} \exp( - \frac{4}{h(x)}) \leq C_M \exp( - \frac{2}{h(x)})$ to iteratively absorb derivative terms in $(x,r,z)$ as done in Proposition~\ref{prop:arrival-time-construction}.
In particular, the parameter $\tau \in (0,\tau_0]$ can be constructed with $\tau_0 = \tau_0(m,n)$ sufficiently small, so that $|a| < \frac{1}{10}$ in~\eqref{eqn:bounds-on-a,b,c} while $|z| < \frac{1}{2}$, hence the transport equation defines a non-degenerate quasilinear first order PDE for $|z|<\frac{1}{2}$.
With the estimates~\eqref{eqn:bounds-on-a,b,c} in place, we can apply Proposition~\ref{prop:solvability-of-transport} to obtain a smooth solution $z$ of equation~\eqref{eqn:simplified-in-gluing-region} in the region $\{ h(x)^4 \leq r \leq 3 h(x)^4 \}$ with $\{ z= 0 \}$ on the initial hypersurface $\Sigma = \{ (x,h(x)^4) : x \in U \}$, which satisfies the bounds
\begin{equation}\label{eqn:solution-z-bounds}
    |D^k_{x,r} z(x,r)| \leq C(m,n,k) \tau \exp( - \tfrac{3}{2} \tfrac{1}{h(x)} )\qquad \text{in } \; \Omega_{1,3} = \{ (x,r) : x \in U, \; h(x)^4 \leq r \leq 3 h(x)^4 \}.
\end{equation}
Since $T = T_w$ with $\cP(T) = 0$ for $h(x)^4 \leq r \leq 2 h(x)^4$, the uniqueness theorem for solutions of the initial value problem for this first-order quasilinear PDE forces $z \equiv 0$ on $\{ h(x)^4 \leq r \leq 2 h(x)^4 \}$.
In the region $r \geq 3 h(x)^4$, we have $T(x,r) = T_0(r)$, so equation~\eqref{eqn:final-transport-equation} becomes the cylinder ODE~\eqref{eqn:cylinder-equation-terms}, which is separable with explicit solution $z(x,r) = \frac{G(x)}{1 + G(x) r}$, hence $f(x,r) = 1- r z(x,r)= \frac{1}{1+G(x) r}$ for a function $G(x)$.
The hypersurface $\{ r = 3 h(x)^4 \}$ provides the initial condition
\begin{align*}
G(x) &= \frac{z(x , 3 h(x)^4)}{1 - 3 h(x)^4 z( x, 3h(x)^4)}, \qquad f(x,r) = \frac{1 - 3 h(x)^4 z(x,3h(x)^4)}{1 - 3 h(x)^4 z(x,3h(x)^4) + z(x,3h(x)^4) r}.
\end{align*}
Here, $|D^k_x G(x)| \leq C(m,n,k) \tau \exp \bigl( - \tfrac{1}{h(x)} \bigr)$ due to~\eqref{eqn:solution-z-bounds}, thus $G$ extends smoothly by $0$ across $K$.
We conclude that $f(x,r) = 1 - r z(x,r)$ is defined and smooth on the entire region $\{ (x,r) : x \in U, r \geq s(x) \}$, satisfying the desired properties with $\sup |D^k_x \partial^{\ell}_r (f-1)| < C(m,n,k, \ell) \tau$ for all $k,\ell$ and
\[
f \equiv 1 \quad \text{on} \quad (K \times \bR_{\geq 0}) \cup \{ (x,r) : x \in U, \; 0 \leq r \leq 2 h(x)^4 \}.
\]
Since $\cP_f(T) = 0$, the hypersurfaces $\cG_t = \{ (x,\xi) : T(x,|\xi|^2) = t \}$ evolve by mean curvature under the metric $g_f$, are mean-convex due to the presence of a well-defined arrival time, and are defined for all $t<0$.
The definition~\eqref{eqn:arrival-time-function} ensures that $T(x,0) = 0$ for $x \in K$ and $T(x,s(x)) = 0$ with $s(x) > 0$ for $x \in U$, hence $\{ \cG_t \}_{t<0}$ is a mean-convex ancient mean curvature flow that becomes singular at time $t=0$, when it encounters precisely the set $\on{sing} \cG_t = K \times \{ 0 \} \subset \bR^{m+n}$.
\end{proof}

\appendix

\section{Solvability of a transport equation}\label{appendix:solvability-transport}

We solve here the transport equation~\eqref{eqn:simplified-in-gluing-region} to construct the warping function $f(x,r)$ of Theorem~\ref{thm:main-theorem}.

\begin{proposition}\label{prop:solvability-of-transport}
Let $U$ and $h(x)$ be as in Section~\ref{section:non-cylindrical} and define the domain $Q := \{ (x,r,z) : x \in U, h(x)^4 \leq r \leq 3h(x)^4, |z| \leq \frac{1}{2} \}$.
We consider the transport equation
\begin{equation}\label{eqn:transport-equation}
    z_r + z^2 = - \la a(x,r,z), \nabla_x z \rg + b(x,r,z) \, z + c(x,r,z),
\end{equation}
where $a,b,c$ satisfy the bounds~\eqref{eqn:bounds-on-a,b,c}.
For $\tau_0(m,n)$ small and $\tau \in (0,\tau_0]$, the initial value problem for equation~\eqref{eqn:transport-equation} with data $\{ z=0 \}$ on $\Sigma = \{ (x,h(x)^4) : x \in U \}$ has a unique solution, which satisfies
\begin{equation}\label{eqn:z-bounds-now}
    |D^k_{x,r} z(x,r)| \leq C(m,n,k) \tau \exp \bigl( - \tfrac{3}{2} \tfrac{1}{h(x)} \bigr) \qquad \text{in } \; \{ x \in U, \; h(x)^4 \leq r \leq 3 h(x)^4 \}.
\end{equation}
\end{proposition}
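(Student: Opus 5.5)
The plan is to solve \eqref{eqn:transport-equation} by the method of characteristics, using $r$ as the time-like variable. The initial surface $\Sigma=\{r=h(x)^4\}$ is a graph over $U$. It is noncharacteristic because the characteristic vector field $\partial_r+\la a,\nabla_x\rg$ is transverse to $\Sigma$: its normal is $(-\nabla(h^4),1)$, and
\[
|\la a,\nabla (h^4)\rg|\le C\tau^2 e^{-2/h}\ll 1 .
\]
The characteristic system reads
\[
\dot x = a(x,r,z),\qquad \dot z = -z^2 + b(x,r,z)\,z + c(x,r,z),
\]
with $x(r_0)=x_0$, $z(r_0)=0$ and $r_0=h(x_0)^4$. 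We run it for $r\in[h(x_0)^4,3h(x_0)^4]$.

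\textbf{Step 1: a priori bounds along one characteristic.} By \eqref{eqn:bounds-on-a,b,c} we have $|a|\le C\tau e^{-2/h}$. Over an $r$-interval of length at most $2h(x_0)^4$, the spatial displacement is therefore at most $C\tau h(x_0)^4e^{-2/h(x_0)}$. Since $h^{1/4}$ is $\tau$-Lipschitz, $h(x(r))$ stays comparable to $h(x_0)$ along the curve. Consequently the curve remains inside $\{h(x)^4\le r\le 3h(x)^4\}$, up to adjusting the endpoint, and never reaches $K$.

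For $z$, a bootstrap with the assumption $|z|\le\frac12$ gives
\[
|\dot z|\le |z|^2+|b||z|+|c|\le |z|^2+\frac{C\tau e^{-2/h}}{r}|z|+\frac{C\tau e^{-2/h}}{r}.
\]
Gronwall on the interval $r\in[h^4,3h^4]$ then applies, where $\int dr/r=\log 3$. This yields
\[
|z|\le C\tau e^{-2/h}h^{-4}\le C\tau e^{-3/(2h)},
\]
which in particular closes the bootstrap $|z|<\frac12$.

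\textbf{Step 2: invertibility of the characteristic map.} We show that $\Phi:(x_0,s)\mapsto(x(r_0+s),r_0+s)$ is a diffeomorphism onto $\Omega_{1,3}$. For this we differentiate the characteristic system in $x_0$. The variational equations have coefficients $D_xa$ and $D_za$, both of size $\lesssim \tau e^{-2/h}/r$ by the $k=1$ case of \eqref{eqn:bounds-on-a,b,c}. We also account for the dependence of $r_0=h(x_0)^4$ on $x_0$. Together these give $|D_{x_0}x-I|\le C\tau$, so $\Phi$ is a local diffeomorphism.

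Injectivity requires more care, because $U$ is noncompact and $\Phi$ is only locally close to the identity. Distinct characteristics cannot cross, since the system is a smooth ODE in $(x,z)$. The map $x_0\mapsto x(r)$ at each fixed level is a small perturbation of the identity within a neighbourhood of size $\ll h(x_0)^{2}$, which gives injectivity. Surjectivity follows from the same estimate, for example by running characteristics backward from any $(x,r)\in\Omega_{1,3}$ down to $\Sigma$; Step 1 guarantees that they hit $\Sigma$ before leaving $U$.

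Setting $z(x,r)$ equal to the value of $z$ along the characteristic through $(x,r)$ then gives a smooth solution. Uniqueness follows because any $C^1$ solution restricted to a characteristic curve satisfies the same ODE, with the same initial data.

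\textbf{Step 3: higher derivatives.} We bound $D^k_{x,r}z$ by induction on $k$. Differentiating the characteristic ODEs $k$ times in $x_0$ produces linear ODEs for the $k$-th variations. Their forcing terms are products of lower variations with derivatives of $a,b,c$, each controlled by $C\tau e^{-2/h}r^{-k-1}$. Since $r\sim h^4$, each derivative costs at most a polynomial factor $h^{-M_k}$. These factors are absorbed using $h^{-M}e^{-2/h}\le C_M e^{-3/(2h)}$, exactly as in Proposition~\ref{prop:arrival-time-construction}.

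Composing with $\Phi^{-1}$, whose derivatives are bounded by Step 2, transfers the bounds from characteristic coordinates to $D^k_{x,r}z$. Finally, the $r$-derivatives are recovered from the equation itself. This gives \eqref{eqn:z-bounds-now}.

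I expect the main obstacle to be the global invertibility in Step 2. The region degenerates near $K$, and the derivative bounds on $a,b,c$ blow up like powers of $r^{-1}\sim h^{-4}$. So the smallness must come from the $e^{-2/h}$ factor, uniformly in $x_0$, rather than from any fixed constant.
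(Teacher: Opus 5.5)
Your proposal is correct in substance and uses the same mechanism as the paper: characteristics, Gronwall along each characteristic, a characteristic map that is $C^1$-close to the trivial map $(x_0,s)\mapsto(x_0,h(x_0)^4+s)$, and absorption of polynomial losses $h^{-M}$ into the exponential. The difference is organizational.

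The paper never works globally. It fixes $x_0$ and rescales $(x,r)=(x_0+\rho y,\rho s)$ with $\rho=h(x_0)^4$. The weighted bounds on $a,b,c$ then become uniform $C^k$ bounds $C_k\tau e^{-1.9/h(x_0)}$ for $\tilde a=a$, $\tilde b=\rho b$, $\tilde c=\rho c$ on a fixed-size set; the leftover powers of $\rho^{-1}$ from $z$-derivatives are eaten by the exponential. Standard smooth dependence and a perturbation-of-a-diffeomorphism argument on $B_5\times[0,4]$ give a local solution. The factors $\rho^{-k}$ from scaling back are absorbed, and the local solutions are patched by uniqueness. This sidesteps what you flag as the main obstacle: global invertibility on the noncompact, degenerating region is never needed, and the $r^{-k}$ weights are never carried through variational equations by hand. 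Your global route gives a single characteristic map and no patching, at the cost of that bookkeeping.

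Two justifications in your Step 2 need replacing, though the conclusions survive.

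\emph{Injectivity.} ODE uniqueness in $(x,z)$ does not prevent the projections of two characteristics to the $(x,r)$-plane from crossing, because $a$ depends on $z$. Injectivity must come from your quantitative argument instead. The displacement bound $|x(r)-x_0|\le C\tau h^4e^{-2/h}$ forces any two preimages of a point to lie that close together. Then $D\Phi$ being close to $D\Phi_0$ on a convex neighbourhood in the $(x_0,s)$ variables gives injectivity.

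\emph{Surjectivity.} You cannot run a characteristic backward from $(x,r)$ without already knowing $z(x,r)$. Instead, shoot: run backward from $(x,r,z^*)$ for $z^*\in[-\tfrac14,\tfrac14]$. Since $z$ changes by only $O(\tau e^{-2/h}+h^4)$ before the curve reaches $\Sigma$, the intermediate value theorem gives a $z^*$ whose arrival value is $0$. Alternatively, solve $p=\Phi_0^{-1}(q-E(p))$ by contraction, which is what the paper's splitting $\Xi=\Xi_0+E$ implicitly does.

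As a minor point, your Gronwall step already gives $|z|\le C\tau e^{-2/h}$ with no $h^{-4}$ loss, since $\int_{r_0}^{3r_0}|c|\,dr\le C\tau e^{-2/h}\log 3$.
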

\begin{proof}
We use a local rescaling and the method of characteristics as in~\cite{prescribed-singular}*{\S 3}; our choice of $z = \frac{1-f}{r}$ simplifies the transport equation~\eqref{eqn:transport-equation} and its solution significantly.
The key property to verify will be the uniform estimate~\eqref{eqn:z-bounds-now} near $\partial U$ used in the gluing construction of Theorem~\ref{thm:main-theorem}.

We fix an $x_0 \in U$, set $\rho := h(x_0)^4$, and define $\Psi(y) := \frac{h(x_0 + \rho y)^4}{\rho}$ for $(x,r) = (x_0 + \rho y, \rho s)$.
For small $\tau_0$, $|y| \leq 6$, and all $k \geq 1$, Lemma~\ref{lemma:geometric-properties-of-h} gives
\begin{equation}\label{eqn:local-h-comparison}
\tfrac{1}{2} h(x_0) \leq h(x_0+\rho y) \leq 2h(x_0), \qquad \tfrac{1}{2} \leq \Psi(y)\leq 2, \qquad |D_y^k \Psi(y)| \leq C_k\tau.
\end{equation}
Let $w(y,s) := z(x_0 + \rho y, \rho s)$, which has initial condition $w(y,\Psi(y)) = 0$ and $(\nabla_x z, z_r) = \rho^{-1} (\nabla_y w, w_s)$.
The $\rho$-rescaled equation~\eqref{eqn:transport-equation} is written in terms of $w$ as
\begin{equation}\label{eqn:rescaled-equation-for-w}
    w_s + \rho w^2 = - \la \tilde{a}(y,s,w), \nabla_y w \rg + \tilde{b}(y,s,w) \, w + \tilde{c}(y,s,w), \qquad w(y,\Psi(y)) = 0.
\end{equation}
Here, we defined the coefficients $\tilde{a}, \tilde{b}, \tilde{c}$ as rescalings of $a,b,c$ via
\[
\tilde{a}(y,s,w) := a(x_0 + \rho y, \rho s,w), \quad \tilde{b}(y,s,w) := \rho b(x_0 + \rho y, \rho s, w), \quad \tilde{c}(y,s,w) := \rho c(x_0 + \rho y, \rho s, w).
\]
Combining the bounds~\eqref{eqn:bounds-on-a,b,c} with $h(x_0 + \rho y) < \frac{20}{19} h(x_0)$ for $|y| \leq 6$ due to Lemma~\ref{lemma:geometric-properties-of-h}, we obtain
\begin{equation}\label{eqn:rescaled-coefficients}
    |D^k_{y,s,w}\tilde a|+|D^k_{y,s,w}\tilde b|+|D^k_{y,s,w}\tilde c| \leq C_k \tau e^{- \frac{1.9}{h(x_0)}} \quad \text{in } \; \tilde{Q} := \bigl\{ (y,s,w) : |y| \leq 6, s \in [ \tfrac{1}{2}, 6], \, |w| \leq \tfrac{1}{2} \bigr\}.
\end{equation}
For $\eta \in B_5 := B^n_5(0)$ and $t \in [0,4]$, we set $S(\eta,t) := t + \Psi(\eta)$ and consider the characteristic system
\begin{equation}\label{eqn:char-system-final}
\begin{cases}
\partial_t Y(\eta,t)=\tilde a(Y,S,W),\\
\partial_t W(\eta,t)=\tilde b(Y,S,W)\,W+\tilde c(Y,S,W)-\rho\,W^2,
\end{cases} \qquad Y(\eta,0)=\eta,\quad W(\eta,0)=0.
\end{equation}
We claim that for $\tau_0$ small, the system~\eqref{eqn:char-system-final} has a smooth solution $(Y,W)$ on all of $B_5 \times [0,4]$ that remains inside $\tilde{Q}$.
We will then prove that the characteristic map $\Xi: B_5 \times [0,4] \to \bR^n \times \bR$ of the system, given by $\Xi(\eta,t) = (Y(\eta,t), S(\eta,t))$, maps $B_5 \times [0,4]$ diffeomorphically onto a set containing $\cS := \{ |y|<4, \Psi(y) \leq s \leq 3 \Psi(y) \}$.
After proving that $\Xi$ is smoothly invertible, we see by direct differentiation that given any solution $(Y,W)$ of~\eqref{eqn:char-system-final}, the function $w(y,s) = W(\Xi^{-1}(y,s))$ defines a smooth solution of~\eqref{eqn:rescaled-equation-for-w}.
Since the local smooth existence of a solution to~\eqref{eqn:char-system-final} is standard as long as $(Y,S,W) \in \tilde{Q}$, by a continuation argument it suffices to prove that $|W| \leq \frac{1}{4}$ for all $(\eta,t) \in B_5 \times [0,4]$.

For $|h|_{L^{\infty}(U)} \leq \tau_0$ small, the $Y$-equation~\eqref{eqn:char-system-final} and the bound~\eqref{eqn:rescaled-coefficients} for $\tilde{a}$ give
\begin{equation}\label{eqn:y-bound}
    |\partial_t Y| \leq C \tau \exp \bigl( - \tfrac{1.9}{h(x_0)} \bigr), \quad \implies \quad |Y(\eta,t) - \eta| \leq C \tau \exp \bigl( - \tfrac{1.9}{h(x_0)} \bigr), \quad \implies \quad |Y(\eta,t)| \leq \tfrac{11}{2},
\end{equation}
while $S(\eta,t) = t + \Psi(\eta)$ has $\frac{1}{2} \leq S(\eta,t) \leq 6$ by~\eqref{eqn:local-h-comparison}.
Consequently,~\eqref{eqn:rescaled-coefficients} and ~\eqref{eqn:char-system-final} imply
\[
|\partial_t W| \leq C \tau \exp \bigl( - \tfrac{1.9}{h(x_0)} \bigr) ( 1 + |W|) + \rho |W|^2, \qquad \rho = h(x_0)^4 \leq \tau_0^4.
\]
Starting from $W(\eta,0) = 0$, a standard bootstrap together with Gr\"onwall's inequality yields
\begin{equation}\label{eqn:W(t,eta)-bound}
    |W(\eta,t)| \leq C \tau \exp ( - \tfrac{1.8}{h(x_0)}), \qquad \text{for } \; (\eta,t) \in B_5 \times [0,4].
\end{equation}
Therefore, $|W| \leq \frac{1}{4}$ for $\tau_0$ small, so the solution of~\eqref{eqn:char-system-final} exists on all of $B_5 \times [0,4]$ and remains in $\tilde{Q}$.

Using the properties~\eqref{eqn:y-bound} and~\eqref{eqn:W(t,eta)-bound} of $(Y,S,W)$, we can express the characteristic map $\Xi$ as
\[
\Xi(\eta,t) = (\eta, t + \Psi(\eta)) + E(\eta,t) = \Xi_0(\eta,t) + E(\eta,t), \qquad \Xi_0(\eta,t) := (\eta, t+\Psi(\eta)), 
\]
where $|E(\eta,t)| \leq C \tau e^{- \frac{1.8}{h(x_0)}}$.
Differentiating the system \eqref{eqn:char-system-final} repeatedly with respect to $(\eta,t)$ and using~\eqref{eqn:local-h-comparison},~\eqref{eqn:rescaled-coefficients}, and \eqref{eqn:W(t,eta)-bound}, the standard smooth dependence theorem for ODE gives
\begin{equation}\label{eqn:derivative-bounds-char}
|D_{\eta}^j D_t^k Y(\eta,t)| \leq C_{jk}, \quad |D_{\eta}^j D_t^k W(\eta,t)| \leq C_{jk} \tau \exp( - \tfrac{1.8}{h(x_0)} ), \qquad \text{for all } \; (\eta,t) \in B_5 \times [0,4].
\end{equation}
Combined with~\eqref{eqn:y-bound}, this gives $\| D \Xi - D \Xi_0 \|_{L^{\infty}(B_5 \times [0,4])} \leq C \tau e^{- \frac{1.8}{h(x_0)}} < \frac{1}{10}$ for $\tau_0$ small.
Since $\Xi_0$ is a smooth diffeomorphism $B_5 \times [0,4] \xrightarrow{\cong} \{ (y,s) : |y| < 5, \Psi(y) \leq s \leq \Psi(y) + 4 \}$, we deduce that $\Xi$ maps $B_5 \times [0,4]$ diffeomorphically onto a set containing $\cS := \{ (y,s) : |y| < 4, \Psi(y) \leq s \leq 3 \Psi(y) \}$.
Moreover, differentiating the identity $\Xi^{-1}\circ \Xi=\mathrm{Id}$ repeatedly and using \eqref{eqn:derivative-bounds-char}, we obtain
\begin{equation}\label{eqn:DkXiy,s-estimates}
    |D^k_{y,s} \Xi^{-1}(y,s)| \leq C_k, \qquad \text{for all } \; (y,s) \in \cS.
\end{equation}
Finally, we define $w(y,s) := W(\Xi^{-1}(y,s))$ for $(y,s) \in \cS$, which is a smooth solution of~\eqref{eqn:rescaled-equation-for-w} with $w(\eta, \Psi(\eta)) = 0$.
Recalling the estimates~\eqref{eqn:derivative-bounds-char} and~\eqref{eqn:DkXiy,s-estimates} and differentiating repeatedly, we find
\begin{equation}\label{eqn:Dkys-w-bounds-finally}
    |D^k_{y,s} w(y,s)| \leq C_k \tau \exp( - \tfrac{1.7}{h(x_0)}), \qquad \text{for all } \; (y,s) \in \cS.
\end{equation}
Returning to $z = w ( \frac{x-x_0}{\rho}, \frac{r}{\rho})$, we obtain a smooth local solution $z$ of~\eqref{eqn:transport-equation} in the neighborhood
\[
\cU_{x_0} := \{ (x,r) : |x  - x_0| < 4 h(x_0)^4, \; h(x)^4 \leq r \leq 3 h(x)^4 \}.
\]
The derivative estimates~\eqref{eqn:Dkys-w-bounds-finally} for $w$ yield $|D^k_{x,r} z(x,r)| \leq C_k \rho^{-k} \tau \exp( - \frac{1.7}{h(x_0)})$ with $\rho = h(x_0)^4$, so
\[
|D^k_{x,r} z| \leq C_k \tau \exp ( - \tfrac{1.6}{h(x_0)}) \leq C_k \tau \exp \bigl( - \tfrac{3}{2} \tfrac{1}{h(x)} \bigr) \qquad \text{in } \; \cU_{x_0},
\]
due to $h(x) > \frac{15}{16} h(x_0)$ for $x \in \cU_{x_0}$ by Lemma~\ref{lemma:geometric-properties-of-h}.
This proves the uniform bound~\eqref{eqn:z-bounds-now} for local solutions, whose domains $\cU_{x_0}$ cover $\Omega_{1,3} = \{ 1 \leq \frac{r}{h(x)^4} \leq 3 \}$ and coincide on overlaps, by uniqueness for the system~\eqref{eqn:char-system-final}.
Therefore, the local solutions we constructed assemble into a unique global solution $z$ in $\Omega_{1,3}$ with $|z| < \frac{1}{2}$, which satisfies the uniform estimates~\eqref{eqn:z-bounds-now} as desired.
\end{proof}

\bibliography{ref}

\end{document}